\documentclass[12pt]{article}
\topmargin=-1.5cm
 \oddsidemargin=0.2truecm   \evensidemargin=0truecm
 \textheight=24cm       \textwidth=15.5cm
\usepackage{amssymb,amsmath,amsthm,amsfonts,enumerate, bbm, mathdots}

\newtheorem{thm}{Theorem}

\newtheorem{prop}[thm]{Proposition}
\newtheorem*{mth}{Main theorem}
\newtheorem{rmk}{Remark}
\newtheorem{lem}[thm]{Lemma}
\newtheorem{cor}[thm]{Corollary}

\theoremstyle{example}

\newcommand{\mH}{\mathfrak{H}}

\def\N{\mathbb {N}} 

\def\Q{\mathbb {Q}}
\def\R{\mathbb {R}}

\def\C{\mathbb {C}}
\def\h{\hbar}

\def\Hht{\mH_{\h, t}}

\def\c{\cdot}
\def\cs{\cdots}
\def\ls{\ldots} 
\def\f{\frac} 
\def\d{\displaystyle}
\def\k{{\mathbbm k}} 
\def\p{{\mathbbm p}} 
\def\z{\zeta}
\def\s{\zeta^{\star}}
\def\q{\zeta_{q}}
\def\qs{\zeta^{\star}_{q}}
\def\t{\zeta^{t}}
\def\tq{\zeta^{t}_{q}}

\def\Dq{\mathcal{D}_q}
\def\Tq{\Theta_q}

\title{Sum of interpolated multiple $q$-zeta values}
\date{\empty}
\author{Zhonghua Li \thanks{School of Mathematical Sciences, Tongji University, Shanghai, 200092
China\newline
E-mail address : zhonghua\_li@tongji.edu.cn
} 
\and 
Noriko Wakabayashi \thanks{College of Science and Engineering, Ritsumeikan University, 1-1-1, Nojihigashi, Kusatsu-city, Shiga 525-8577, Japan\newline
E-mail address : noriko-w@fc.ritsumei.ac.jp}
}

\begin{document} 
\maketitle
\begin{abstract}
Interpolated multiple $q$-zeta values are deformation of multiple $q$-zeta values with one parameter, $t$, and restore classical multiple zeta values as $t = 0$ and $q \to 1$.  
In this paper, we discuss generating functions for sum of interpolated multiple $q$-zeta values with fixed weight, depth and $i$-height. The functions are systematically expressed in terms of the basic hypergeometric functions. 
%
Compared with the result of Ohno and Zagier, our result includes three generalizations: general height, $q$-deformation and $t$-interpolation. 
As an application, we prove some expected relations for interpolated multiple $q$-zeta values including sum formulas. 
\end{abstract}
\tableofcontents
\section{Introduction/Main theorem}\label{Intro}
For any multi-index $\k = (k_1, k_2, \ls, k_l) \in \N^l$, 
the quantities 
wt$(\k) = k_1 +k_2 + \cs +k_l$, dep$(\k)=l$ and $i$-ht$(\k) = \sharp\{j | k_j \geq i+1\}$ 
are called weight, depth and $i$-height of $\k$, respectively. 

For a formal parameter $q$ and an index $\k=(k_1, k_2, \ls, k_l)$ of positive integers with $k_1 \geq 2$ (which we call admissible), $q$-analogues of multiple zeta and zeta-star values ($q$MZVs and $q$MZSVs for short, respectively) are defined by 
\begin{align*}
\q(\k) = \q(k_1, k_2, \ls, k_l) & = \sum_{m_1 > m_2 > \cs > m_l \geq 1} \f{q^{(k_1-1)m_1 + (k_2-1)m_2 + \cs +(k_l-1)m_l}}{{[m_1]}^{k_1}{[m_2]}^{k_2} \cs {[m_l]}^{k_l}} \ (\in \Q[[q]]),\\
\qs(\k) = \qs(k_1, k_2, \ls, k_l) & = \sum_{m_1 \geq m_2 \geq \cs \geq m_l \geq 1} \f{q^{(k_1-1)m_1 + (k_2-1)m_2 + \cs +(k_l-1)m_l}}{{[m_1]}^{k_1}{[m_2]}^{k_2} \cs {[m_l]}^{k_l}} \ (\in \Q[[q]]), 
\end{align*}
where $[n]$ denotes the $q$-integer $\d [n] =\f{1-q^n}{1-q}$. 
In the case of $l=1$, $q$MZVs and $q$MZSVs coincide and are reduced to 
$$\q(k)=\sum_{m \geq 1}\f{q^{(k-1)m}}{[m]^k}. $$ 
The $q$MZ(S)Vs are investigated for example in Bradley\cite{B}, Okuda-Takeyama\cite{OT} and Zhao\cite{Z}. 
If $q \in \C$, $q$MZVs and $q$MZSVs are absolutely convergent in $|q|<1$. 
Taking the limit as $q \to 1$, $q$MZ(S)Vs turn into ordinary multiple zeta(-star) values (MZ(S)Vs for short) given by
\begin{align*}
\z(\k)=\z(k_1, k_2, \ls, k_l) & = \sum_{m_1 > m_2 > \cs > m_l \geq 1} \f{1}{{m_1}^{k_1}{m_2}^{k_2} \cs {m_l}^{k_l}} \ (\in \R),\\
\s(\k)=\s(k_1, k_2, \ls, k_l) & = \sum_{m_1 \geq m_2 \geq \cs \geq m_l \geq 1} \f{1}{{m_1}^{k_1}{m_2}^{k_2} \cs {m_l}^{k_l}} \ (\in \R). 
\end{align*} 
The generating functions of sum of MZ(S)Vs are studied in \cite{AKO, AO, AOW, Li2, OZ}. 
Those for $q$MZ(S)Vs are studied in \cite{B, Li, OT, OO, Take}. 
In this paper, we give a universal result including these as special cases. 

As is introduced in \cite{W},  
with an additional parameter, $t$, we define the interpolation of $q$MZVs (abbreviated as $t$-$q$MZVs) by 
\begin{align*}
\tq(\k)=\tq(k_1, k_2, \ls, k_l) = {\sum_{\p}} (1-q)^{k-\rm{wt}(\p)} \z_q({\p}) t^{l - \rm{dep}(\p)} \ (\in \Q[[q]][t]),  
\end{align*}
where $k=k_1+k_2+\cdots+k_l$ 
and $\sum_{\p}$ stands for the sum where 
$\p$ runs over all indices of the form  
$\p=(k_1\ \square\ \cs\ \square\ k_l)$ in which each $\square$ is filled by three candidates:
$``,"$, $``+"$ or $``-1+"$ (minus $1$ plus). 
If $q \in \C$, $t$-$q$MZVs are absolutely convergent in $|q|<1$. 
Taking the limit as $q \to 1$, $t$-$q$MZVs turn into $t$-MZVs which were introduced in Yamamoto\cite{Y}. 
We notice that $\q^0=\q$, $\q^1=\qs$ and $\tq(k) = \q(k)$ for $k \geq 2$ and arbitrary $t$. 

The $q$-shifted factorial $(a; q)_n$ is defined by
$$
(a; q)_n = 
\left\{ \begin{array}{ll}
1, & n=0,  \\
(1-a) (1-aq) \cs (1-aq^{n-1}), & n= 1, 2, \ls .
\end{array}\right.
$$
For simplicity,  we denote the product $(a_1; q)_n (a_2; q)_n \cs (a_m; q)_n$ by $(a_1, a_2, \ls, a_m; q)_n$. 
For a positive integer $r$,  the basic hypergeometric function 
$\d {}_{r+1}\phi_{r}
\left[ 
\begin{array}{c}
a_1,  \ls, a_{r+1}\\
b_1,  \ls, b_r
\end{array} 
; q, z \right]$  
is defined by the series
$$
{}_{r+1}\phi_{r}
\left[ 
\begin{array}{c}
a_1, \ls, a_{r+1}\\
b_1, \ls, b_r
\end{array} 
; q, z \right]
= \sum_{n=0}^{\infty} \f{(a_1, \ls, a_{r+1}; q)_n}{(q, b_1, \ls, b_r; q)_n}z^n, $$
where $b_j \neq q^{-m}$ for $m=0, 1, \ls$ and $j=1, 2, \ls, r$. 
If $0< q < 1$, the series converges absolutely in $|z| < 1$. 

Let $r$ be a positive integer. 
For any non-negative integers $k, l, h_1, \ls, h_r$, 
we define 
\begin{align*}
\xi^t_0&(k, l, h_1, \ls, h_r) = \sum_{\k \in I_0(k, l, h_1, \ls, h_r)} \tq(\k), 
\end{align*}
where $I_0(k, l, h_1, \ls, h_r)$ denotes the subset of admissible multi-indices (to wit indices with additional requirement $k_1 \geq 2$) of weight $k$, depth $l$, $1$-height $h_1$, \ls, $r$-height $h_r$. In the definition, the sum is treated as $0$ whenever the index set  is empty. 
We also define
\begin{align*}
\Psi^t_0& =\Psi^t_0(u_1, u_2, \ls, u_{r+2}) \\
              & = \sum_{k, l, h_1, \ls, h_r \geq 0} \xi^t_0(k, l, h_1, \ls, h_r)
                     u_1^{k-l-\sum_{j=1}^{r}h_j} u_2^{l-h_1} u_3^{h_1 - h_2} \cs u_{r+1}^{h_{r-1} - h_{r}} u_{r+2}^{h_r}. 
\end{align*}

In order to state our main theorem, we give some other notations below. 
For variables $u_1, u_2, \ls, u_{r+2}$, we put
\begin{align}\label{x1}
x_1=\f{u_1}{1+(1-q)u_1}
\end{align}
and
\begin{align}\label{xj}
x_j=\f{1}{u_1^{r+2-j}} \left\{ \sum_{k=j}^{r+1} {k-2 \choose j-2} ( (q-1)u_1)^{k-j} (u_1^{r+2-k} u_k- u_{r+2}) 
+ \f{u_{r+2}}{( 1+(1-q)u_1)^{j-1}} \right\}
\end{align}
for $j=2, \ls, r+2$. 
Let $\alpha_1, \alpha_2, \ls, \alpha_{r+1}$ be parameters determined by 
$$
\left\{
\begin{array}{l}
\alpha_1 + \alpha_2+ \cs + \alpha_{r+1} = (1-t)x_2-x_1,\\
\d\sum_{1 \leq i_1 < \cs < i_j \leq r+1} \alpha_{i_1} \cs \alpha_{i_j} =(1-t) (x_{j+1} -x_1x_j), \quad j=2, \ls, r+1,  
\end{array}
\right.
$$
and put
\begin{align}\label{ai}
a_i = \f{q}{1+(1-q)\alpha_i}, \quad i=1, 2, \ls, r+1. 
\end{align}
Also let $\beta_1, \beta_2, \ls, \beta_{r+1}$ be parameters determined by 
$$
\left\{
\begin{array}{l}
\beta_1 + \beta_2+ \cs + \beta_{r+1} = -(x_1+t x_2),\\
\d\sum_{1 \leq i_1 < \cs < i_j \leq r+1} \beta_{i_1} \cs \beta_{i_j} =-t (x_{j+1} -x_1x_j), \quad j=2, \ls, r+1,  
\end{array}
\right.
$$
and put 
\begin{align}\label{bi}
b_i = \f{q^2}{1+(1-q)\beta_i}, \quad i=1, 2, \ls, r+1. 
\end{align}
Then we state our main theorem, 
in which one can find that the function $\Psi^t_0$ is expressed in terms of the basic hypergeometric functions. 
\begin{mth}\label{mth}
Let $r$ be a positive integer and $a_1, a_2, \ls, a_{r+1}$, $b_1, b_2, \ls, b_{r+1}$ as above. 
Then we obtain
\begin{align*}
\Psi^t_0 & = \f{(1+(1-q)u_1)^2}{1-qu_1 - t (1-qu_1) \sum_{k=2}^{r+1}q^{k-2} u_k -tq^r u_{r+2}} \\
 & \times \sum_{j=0}^{r-1} A_j \widetilde{B_j} \ {}_{r+2} \phi _{r+1} 
 \left[ 
 \begin{array}{c}
 q^{j+1}, a_1q^j, \ls, a_{r+1}q^j \\
 b_1q^j, \ls, b_{r+1}q^j
 \end{array}
 ; q, \f{b_1 \cs b_{r+1}}{q^r a_1 \cs a_{r+1}}
 \right], 
\end{align*}
where 
$$A_j = \sum_{m=j}^{r-1} c_m S_q(m+1, j+1) + \f{u_1u_2}{(1+(1-q)u_1)^2} S_q(r, j+1), \quad j=0, \ls, r-1, $$
with
\begin{align*}
{c}_m = & \sum\limits_{k=r-m+1}^{r+1}\left\{\left({k-2\atop
r-m}\right)+\frac{(1-q)u_1}{1+(1-q)u_1}\left({k-2\atop
r-m-1}\right)\right\}(-(1-q))^{k-r+m-2}\\
& \times \left(\frac{u_k}{1+(1-q)u_1}-u_1^{k-r-2}u_{r+2}
+\frac{(1-q)u_{k+1}}{1+(1-q)u_1}\right),    
\end{align*}
$S_q(n, k)$ is the $q$-Stirling number of the second kind which is defined recursively by 
$$S_q(n, k) = 
\left\{ \begin{array}{ll}
q^{k-1} S_q(n-1, k-1) + [k] S_q(n-1, k), & 0 < k \leq n, \\
1, & n = k= 0, \\
0, & \text{otherwise},  
\end{array}\right. $$
and
$$\widetilde{B_j} = q \c \f{(q, a_1, \ls, a_{r+1}; q)_j}{(b_1, \ls, b_{r+1}; q)_j} \left( \f{b_1 \cs b_{r+1}}{q^r (1-q) a_1 \cs a_{r+1}} \right)^j, \quad j=0, \ls, r-1. $$
\end{mth}
Before giving its proof, we discuss in the next section some special cases of the theorem to find that several known results are implied by this theorem.  
In \S \ref{tqMPL}, we introduce interpolation of $q$-analogue of multiple polylogarithms ($t$-$q$MPLs for short), 
investigate several difference formulas to obtain the relation between $t$-$q$MPLs and the basic hypergeometric functions, 
and show that their values at $z=q$ can be written as a linear combination of $t$-$q$MZVs. 
Thanks to the results in  \S \ref{tqMPL}, we establish our proof of the main theorem in \S \ref{SumtqMZV}. 

\section{Special cases of main theorem}\label{Special}
\subsection{The case of $r = 1$}
Our main theorem mentioned in the previous section is reduced to the following corollary in the case of $r=1$. 
\begin{cor}\label{cor}
We obtain the generating function of $t$-$q$MZVs for $1$-ht:  
\begin{align}\label{r1}
\Psi^t_0 = 
\f{q u_3}{(1-qu_1)(1-t u_2) - t qu_3} 
 {}_{3} \phi_{2} 
 \left[ 
 \begin{array}{c}
 q, a_1, a_2 \\
 b_1, b_2
 \end{array}
 ; q, \f{q\{ 1+(1-q)(1-t)u_2 \}}{1-(1-q)t u_2}
 \right],  
 \end{align}
where 
$$\quad a_i =\f{q}{1+(1-q)\alpha_i},\ b_i=\f{q^2}{1+(1-q)\beta_i}$$
and $\alpha_1, \alpha_2, \beta_1, \beta_2$ are determined by 
$$\left\{ \begin{array}{l}
\alpha_1+\alpha_2 
= \f{-u_1 +(1-t) \{u_2 + (1-q)(u_1u_2-u_3) \}}{1+(1-q)u_1}, \\
\alpha_1\alpha_2 
= \f{(1-t)(u_3 - u_1u_2)}{1+(1-q)u_1}, 
\end{array}
\right. 
\left\{ \begin{array}{l}
\beta_1+\beta_2 
= \f{-u_1 - t\{ u_2 + (1-q)(u_1 u_2 -u_3) \}}{1+(1-q)u_1}, \\
\beta_1\beta_2 
= \f{t(u_1u_2 - u_3)}{1+(1-q)u_1}. 
\end{array}
\right. $$
\end{cor}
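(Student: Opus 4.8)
The plan is to obtain Corollary~\ref{cor} simply by specializing the Main theorem to $r=1$, so the work is entirely a matter of unwinding the definitions of the auxiliary data $x_1,\dots,x_{r+2}$, $\alpha_i$, $\beta_i$, $a_i$, $b_i$, and the coefficients $A_j$, $\widetilde{B_j}$ in the case $r=1$, and then checking that all the pieces collapse to the stated compact form. First I would set $r=1$ everywhere: the variables are $u_1,u_2,u_3$, and from \eqref{x1}--\eqref{xj} one computes $x_1=\frac{u_1}{1+(1-q)u_1}$ and the single extra quantity $x_2$ and $x_3$; in particular $x_2$ should come out (after simplification) to the expression appearing inside $\alpha_1+\alpha_2$ etc., namely involving $u_2+(1-q)(u_1u_2-u_3)$ over $1+(1-q)u_1$, and the combination $x_3-x_1x_2$ should reduce to $\pm(1-t$ or $t)$ times $(u_3-u_1u_2)/(1+(1-q)u_1)$, matching the displayed symmetric-function relations for $\alpha_i$ and $\beta_i$. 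This is the step where most of the routine algebra lives; the binomial coefficients $\binom{k-2}{j-2}$ with $k,j\in\{2,3\}$ are all trivial ($\binom{0}{0}=1$, $\binom{1}{0}=1$, etc.), so the formula for $x_j$ has only one or two terms.

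Next I would simplify the scalar prefactor. The Main theorem's denominator $1-qu_1-t(1-qu_1)\sum_{k=2}^{r+1}q^{k-2}u_k-tq^r u_{r+2}$ becomes, for $r=1$, $1-qu_1-t(1-qu_1)u_2-tqu_3=(1-qu_1)(1-tu_2)-tqu_3$, which is exactly the denominator in \eqref{r1}; and the factor $(1+(1-q)u_1)^2$ in the numerator must be absorbed. The sum over $j$ runs only over $j=0$ (since $j=0,\dots,r-1=0$), so there is a single basic hypergeometric term, a ${}_3\phi_2$, matching the right-hand side. I then need $A_0$ and $\widetilde{B_0}$: for $j=0$ we have $\widetilde{B_0}=q\cdot\frac{(q,a_1,\dots,a_{r+1};q)_0}{(b_1,\dots,b_{r+1};q)_0}(\cdots)^0=q$, so the $q$ in front of \eqref{r1} should come from $\widetilde{B_0}$ times whatever survives of the prefactor. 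For $A_0=\sum_{m=0}^{0}c_m S_q(m+1,1)+\frac{u_1u_2}{(1+(1-q)u_1)^2}S_q(1,1)$, using $S_q(1,1)=1$ and the explicit $c_0$, one should find that $A_0\cdot(1+(1-q)u_1)^2$ combines with the prefactor and with $\widetilde{B_0}=q$ to produce the factor $qu_3$ in the numerator of \eqref{r1}; tracking this cancellation of $c_0$ (which itself is a sum over $k=1,2$ of terms involving $u_1,u_2,u_3$ and $u_2$) against the $(1+(1-q)u_1)$ powers is the one genuinely fiddly bookkeeping step.

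Finally I would verify the argument of the ${}_3\phi_2$. From the Main theorem it is $\dfrac{b_1\cdots b_{r+1}}{q^r a_1\cdots a_{r+1}}$, which for $r=1$ is $\dfrac{b_1b_2}{q a_1 a_2}$. Using \eqref{ai}--\eqref{bi}, $a_1a_2=\dfrac{q^2}{(1+(1-q)\alpha_1)(1+(1-q)\alpha_2)}$ and $b_1b_2=\dfrac{q^4}{(1+(1-q)\beta_1)(1+(1-q)\beta_2)}$, and $(1+(1-q)\alpha_1)(1+(1-q)\alpha_2)=1+(1-q)(\alpha_1+\alpha_2)+(1-q)^2\alpha_1\alpha_2$ is evaluated from the symmetric-function relations, likewise for the $\beta$'s; the ratio then simplifies to $\dfrac{q\{1+(1-q)(1-t)u_2\}}{1-(1-q)tu_2}$ as claimed in \eqref{r1}. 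The upper and lower parameters $q,a_1,a_2$ and $b_1,b_2$ of the ${}_3\phi_2$ are inherited directly (with $q^j=q^0=1$), and the definitions of $a_i,b_i$ and of $\alpha_i,\beta_i$ in the corollary are literally the $r=1$ instances of \eqref{ai}, \eqref{bi} and the two defining systems, once $x_1,x_2,x_3$ are substituted. I expect the main obstacle to be purely organizational: keeping the powers of $1+(1-q)u_1$ straight while simplifying $c_0$ and $A_0$ so that the prefactor, $A_0$, and $\widetilde{B_0}$ multiply together to the clean $\dfrac{qu_3}{(1-qu_1)(1-tu_2)-tqu_3}$, with no residual dependence on $u_1,u_2$ in the numerator.
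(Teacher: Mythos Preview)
Your proposal is correct and follows essentially the same approach as the paper: specialize the Main theorem to $r=1$, read off $\widetilde{B_0}=q$, compute $A_0=c_0+\dfrac{u_1u_2}{(1+(1-q)u_1)^2}$ via $S_q(1,1)=1$, simplify the scalar prefactor, and then rewrite the $\alpha_i,\beta_i$ relations and the ${}_3\phi_2$ argument in terms of $u_1,u_2,u_3$ using \eqref{x1}--\eqref{xj}. One small bookkeeping slip: for $r=1$, $m=0$ the sum defining $c_0$ runs only over $k=2$ (from $k=r-m+1=2$ to $k=r+1=2$), not over $k=1,2$; the single term, with $\binom{0}{1}=0$ and $\binom{0}{0}=1$, gives $c_0=\dfrac{u_3-u_1u_2}{(1+(1-q)u_1)^2}$, so that $A_0=\dfrac{u_3}{(1+(1-q)u_1)^2}$ and the $(1+(1-q)u_1)^2$ cancels cleanly against the prefactor, exactly as in the paper.
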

\begin{proof}
Set $r=1$ in the main theorem. Since  
$A_0=\left( c_0 + \f{u_1 u_2}{(1+(1-q)u_1)^2} \right)S_q(1, 1)$, 
$\widetilde{B_0} = q$, 
$c_0 = - \f{u_1 u_2}{(1+(1-q)u_1)^2} + \f{u_3}{(1+(1-q)u_1)^2}$, 
$S_q(1, 1) =1$, 
we have 
$$A_0= \f{u_3}{(1+(1-q)u_1)^2}. $$
Hence we obtain
\begin{align}\label{r2}
\Psi^t_0 = \f{q u_3}{(1-q u_1)(1-t u_2) - t q u_3} 
{}_3\phi_2 \left[ \begin{array}{c} q, a_1, a_2 \\ b_1, b_2 \end{array} ; q, \f{b_1 b_2}{q a_1 a_2} \right], 
\end{align} 
where 
$$\quad a_i =\f{q}{1+(1-q)\alpha_i},\ b_i=\f{q^2}{1+(1-q)\beta_i}$$
and $\alpha_1, \alpha_2, \beta_1, \beta_2$ are determined by 
$$\left\{ \begin{array}{l}
\alpha_1+\alpha_2 = (1-t)x_2 - x_1, \\
\alpha_1\alpha_2 = (1-t)(x_3 - x_1x_2),  
\end{array}
\right. 
\left\{ \begin{array}{l}
\beta_1+\beta_2 = -(x_1+tx_2), \\
\beta_1\beta_2 = -t(x_3 - x_1x_2).  
\end{array}
\right. $$
Because of the transformation \eqref{x1} and \eqref{xj}, we conclude the corollary.  
\end{proof}
\subsubsection{Reduction to Okuda and Takeyama's result}
We find that our main theorem includes Okuda-Takeyama\cite[Theorem 3]{OT} by putting $t=0$ in Corollary \ref{cor}. 
In fact, 
%
if $t=0$ we have $b_1=q^2$ (since we may assume $\beta_1 = 0$ and $\beta_2 = -x_1$), 
and using Heine's summation formula (see \cite{G} for example)
\begin{align}\label{Heine}
{}_2 \phi_1 \left[ \begin{array}{c} a_1, a_2 \\ b_1 \end{array} ; q, \f{b_1}{a_1 a_2} \right] 
= \f{\left( \f{b_1}{a_1} ; q \right)_{\infty} \left( \f{b_1}{a_2} ; q \right)_{\infty}}{\left( b_1 ; q \right)_{\infty} \left( \f{b_1}{a_1 a_2} ; q\right)_{\infty}},
\end{align}
where $(a; q)_{\infty} = \d\prod_{n=1}^{\infty}(1-a q^{n-1})$, 
and the formula 
\begin{align}\label{log}
\log \prod_{n=1}^{\infty} \left(1-\f{q^n}{[n]} x \right) = \f{1}{q-1} \log\{1+ (1-q)x \} \sum_{n=1}^{\infty}\f{q^n}{[n]} - \sum_{n=2}^{\infty} \q(n) \sum_{m=0}^{\infty} \f{(q-1)^m}{m+n} x^{m+n}, 
\end{align}
we know that the equation \eqref{r2} turns into 
\begin{align*}
\Psi^0_0 & = 
\f{u_3}{u_3-u_1u_2} 
\left(
{}_{2} \phi_{1} 
\left[ 
\begin{array}{c}
\d\f{a_1}{q}, \d\f{a_2}{q} \\
\d\f{b_2}{q}
\end{array}
; q, \f{qb_2}{a_1a_2} \right] -1 \right)\\
& = \f{u_3}{u_3-u_1u_2} 
\left\{ \exp \left( \sum_{n=2}^{\infty} \q(n) \sum_{m=0}^{\infty} \f{(q-1)^m}{m+n}(u_1^{m+n} + u_2^{m+n} -z_1^{m+n} -z_2^{m+n})\right) -1  \right\}, 
\end{align*}
where  
$$\left\{ \begin{array}{l}
z_1+z_2 = u_1 + u_2 + (1-q)(u_1u_2-u_3), \\
z_1 z_2 =u_3.
\end{array} \right. $$
Moreover we find that this is reduced to the result in Ohno-Zagier\cite{OZ} as $q \to 1$.   
\subsubsection{Reduction to Takeyama's result}
Also our main theorem includes Takeyama\cite[Theorem 1.1]{Take} by putting $t=1$ in Corollary \ref{cor}. 
In fact, 
%
if $t=1$ we have $a_1 = q$ and $a_2 = q\{1+(1-q)u_1\}$ 
(since we may asumme $\alpha_1 = 0$ and $\alpha_2 = -x_1$). 
Hence the equation \eqref{r1} turns into
$$
\Psi^1_0 = 
\f{q u_3}{(1-qu_1)(1- u_2) -  qu_3} 
{}_{3} \phi_{2} 
\left[ 
\begin{array}{c}
q, q, q\{1+(1-q)u_1\} \\
\d\f{q^2}{x}, \f{q^2}{y}
\end{array}
; q, \f{q}{1-(1-q)u_2}
\right],  
$$
where $x= 1+(1-q) \beta_1, y=1+(1-q) \beta_2$. 
Therefore we have 
$$\left\{ \begin{array}{l}
x+y= 2+ (1-q)(\beta_1 + \beta_2) = \d\f{2+(1-q)(u_1-u_2) + (1-q)^2(u_3 - u_1 u_2)}{1+(1-q)u_1}, \\
xy=1+(1-q)(\beta_1 + \beta_2) + (1-q)^2\beta_1\beta_2 =\d \f{1-(1-q)u_2}{1+(1-q)u_1}.
\end{array} \right. 
$$
Moreover we find that this is reduced to the result in Aoki-Kombu-Ohno\cite{AKO} as $q \to 1$. 
\subsubsection{Sum formula for $t$-$q$MZVs I}
As an application of Corollary \ref{cor}, we establish the following sum formula for $t$-$q$MZVs.  
\begin{thm}\label{SF} For any positive integers $k > n \geq 1$, we have 
\begin{align*}
& \sum_{\text{wt}(\k)=k, \text{dep}(\k)=n \atop \k : \text{admissible}} \tq(\k) \\
& = \f{1}{k-1} \sum_{0 \leq l \leq j \leq n-1} {k-1 \choose j} {j \choose l} (k-1-l) t^j (1-t)^{n-1-j} (1-q)^{l} \q(k-l). 
\end{align*}
\end{thm}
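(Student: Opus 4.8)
\emph{Collapsing the height.} The plan is to derive Theorem~\ref{SF} from Corollary~\ref{cor} by collapsing the $1$-height in $\Psi^t_0$ and then extracting a single coefficient. First I would substitute $u_3=u_1u_2$ in $\Psi^t_0$. A summand $\tq(\k)$ of weight $k$, depth $l$ and $1$-height $h$ carries the monomial $u_1^{k-l-h}u_2^{l-h}u_3^{h}$, which becomes $u_1^{k-l}u_2^{l}$ for every value of $h$; hence
\[
\Psi^t_0(u_1,u_2,u_1u_2)=\sum_{k>l\ge 1}\Bigl(\ \sum_{\substack{\mathrm{wt}(\k)=k,\ \mathrm{dep}(\k)=l\\ \k\ \text{admissible}}}\tq(\k)\Bigr)u_1^{k-l}u_2^{l},
\]
so Theorem~\ref{SF} is exactly the statement that the coefficient of $u_1^{k-n}u_2^{n}$ in the right-hand side of \eqref{r1} at $u_3=u_1u_2$ equals the asserted double sum. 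The substitution also simplifies that right-hand side: it forces $\alpha_1\alpha_2=\beta_1\beta_2=0$, so we may take $\alpha_1=\beta_1=0$, whence $a_1=q$, $b_1=q^2$, $a_2=\tfrac{q(1+(1-q)u_1)}{1+(1-q)(1-t)u_2}$, $b_2=\tfrac{q^2(1+(1-q)u_1)}{1-(1-q)tu_2}$; the rational prefactor collapses to $\dfrac{qu_1u_2}{1-qu_1-tu_2}$, and, since $a_1=q$ and $b_1=q^2$ (so the argument $b_1b_2/(qa_1a_2)$ is just $b_2/a_2$), the factor ${}_{3}\phi_{2}$ reduces to $\displaystyle\sum_{m\ge0}\frac{1}{[m+1]}\frac{(a_2;q)_m}{(b_2;q)_m}\Bigl(\frac{b_2}{a_2}\Bigr)^m$.

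\emph{Evaluating the series.} Next I would put $\Psi^t_0(u_1,u_2,u_1u_2)$ in a form whose coefficients visibly involve the $\q(k-l)$. Substituting the values of $a_2,b_2$ and telescoping gives
\[
\frac{(a_2;q)_m}{(b_2;q)_m}\Bigl(\frac{b_2}{a_2}\Bigr)^m
=q^{m}\,\frac{\prod_{i=1}^{m}\bigl([i]+(1-t)u_2-q^{i}u_1\bigr)}{\prod_{i=2}^{m+1}\bigl([i]-tu_2-q^{i}u_1\bigr)},
\]
a ratio of finite products of factors of the shape $[i]-(\text{linear form in }u_1,u_2)$ --- exactly the shape to which Heine's summation formula \eqref{Heine} and the logarithmic identity \eqref{log} apply after the usual rearrangement. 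Carrying this out by the same chain of identities used in the reduction to Okuda--Takeyama's result --- but with the index shift now arranged so as to absorb the apparent $0/0$ coming from $a_1=q$ --- should give a closed form for $\Psi^t_0(u_1,u_2,u_1u_2)$ of the type $\dfrac{u_1u_2}{(\text{linear form})}\bigl(\exp\bigl(\sum_{m\ge2}\q(m)\,P_m(u_1,u_2)\bigr)-1\bigr)$ with explicit $P_m$ depending on $t$ and $q$. As a consistency check, at $t=0$ this must collapse to $\Psi^0_0(u_1,u_2,u_1u_2)=\sum_{m\ge2}\q(m)\,u_1u_2\,\dfrac{u_1^{\,m-1}-u_2^{\,m-1}}{u_1-u_2}$, whose coefficient of $u_1^{k-n}u_2^{n}$ is $\q(k)$ --- the $q$-analogue of the sum formula in depth $n$, which is the $t=0$ case of the theorem.

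\emph{Coefficient comparison.} Finally I would read off $[u_1^{k-n}u_2^{n}]$ from this closed form and match it with the right-hand side. It helps to first rewrite the target using the elementary identity $\dfrac{1}{k-1}\binom{k-1}{j}\binom{j}{l}(k-1-l)=\binom{k-2}{l}\binom{k-1-l}{j-l}$, which turns the double sum into
\[
\sum_{l\ge0}\binom{k-2}{l}(1-q)^{l}t^{l}\,\q(k-l)\sum_{i=0}^{n-1-l}\binom{k-1-l}{i}t^{i}(1-t)^{n-1-l-i},
\]
whose inner sum is a single coefficient of $\dfrac{(1+tu_2)^{k-1-l}}{1-(1-t)u_2}$. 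I expect this comparison --- expanding the exponential and the prefactor and tracking how the powers of $u_1$, $t$, $1-t$ and $1-q$ organize into the binomials $\binom{k-2}{l}$ and $\binom{k-1-l}{i}$, including the overall normalization $\tfrac{1}{k-1}$ --- to be the main obstacle. I would handle it by peeling off the $(1-t)/t$ splitting (the $i$-sum, which records the depths surviving the $t$-interpolation) before the $(1-q)$-expansion (the $l$-sum), and cross-check against the depth-one case $n=1$, where the value must be $\tq(k)=\q(k)$, and against the limits $q\to1$ with $t=0$ or $t=1$, where the classical sum formulas are recovered.
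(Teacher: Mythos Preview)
Your opening move is the same as the paper's: specialize $u_3=u_1u_2$ in Corollary~\ref{cor}, which forces $\alpha_1=\beta_1=0$, hence $a_1=q$, $b_1=q^2$, and collapses the prefactor to $\dfrac{qu_1u_2}{1-qu_1-tu_2}$. Your reduction of the ${}_3\phi_2$ to $\sum_{m\ge0}\dfrac{1}{[m+1]}\dfrac{(a_2;q)_m}{(b_2;q)_m}\Bigl(\dfrac{b_2}{a_2}\Bigr)^m$ and your product formula for the general term are both correct.

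The gap is in the second step. Heine's formula \eqref{Heine} evaluates a ${}_2\phi_1$; after the collapse $a_1=q$, $b_1=q^2$ the surviving series is \emph{not} a ${}_2\phi_1$ because of the residual factor $1/[m+1]$. The ``chain of identities used in the reduction to Okuda--Takeyama'' applied only because in that case $t=0$ kills one parameter (one has $b_1=q^2$ alone, not simultaneously $a_1=q$), and then the ${}_3\phi_2$ genuinely drops to a Heine-summable ${}_2\phi_1$. Here it does not, and in fact no exponential closed form of the type you posit exists: your own $t=0$ check gives the rational generating function $\sum_{m\ge2}\q(m)\,u_1u_2\dfrac{u_1^{m-1}-u_2^{m-1}}{u_1-u_2}$, which is not of the shape $\dfrac{u_1u_2}{(\text{linear})}\bigl(\exp(\cdots)-1\bigr)$ in any useful sense. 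So the plan to reach coefficients via Heine $+$ \eqref{log} cannot go through.

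What the paper does instead is apply the $q$-Kummer--Thomae--Whipple transformation to ${}_3\phi_2\bigl[\,q,a_1,a_2;\,b_1,b_2;\,q,\tfrac{b_1b_2}{qa_1a_2}\bigr]$ \emph{before} expanding. This converts the series to one with argument $b_1/q=q$, whose $n$-th term has only two linear factors in the denominator, yielding
\[
\Psi^t_0(u_1,u_2,u_1u_2)=u_1u_2\{1-(1-q)tu_2\}\sum_{n\ge1}\frac{q^n}{\bigl([n]-(tu_2+q^nu_1)\bigr)\bigl([n]-(t+q^n(1-t))u_2\bigr)}.
\]
Each factor expands as a geometric series in $1/[n]$; the identity $q^{nm}=\sum_{i}\binom{l}{i}(1-q)^i[n]^i q^{n(m+l-i)}$ then converts the inner sum into Riemann $q$-zeta values $\q(k-p)$, and the coefficient extraction reduces to the Vandermonde-type identity $\sum_{i=0}^j\binom{i+k}{i}\binom{j-i+m}{j-i}=\binom{k+m+j+1}{j}$ together with your binomial rewriting $\tfrac{1}{k-1}\binom{k-1}{j}\binom{j}{l}(k-1-l)=\binom{k-2}{l}\binom{k-1-l}{j-l}$. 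In short: the missing ingredient is the Kummer--Thomae--Whipple step, which replaces your length-$m$ product ratio by a term with just two denominator factors and makes the coefficient comparison a finite (rather than an exponential) computation.
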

\begin{rmk}
Taking the limit as $q \to 1$, this formula is reduced to the sum formula of $t$-MZVs
\begin{align*}
\sum_{\text{wt}(\k)=k, \text{dep}(\k)=n \atop \k : \text{admissible}} \t(\k) = \left\{ \sum_{j=0}^{n-1} {k-1 \choose j} t^j (1-t)^{n-1-j} \right\} \z(k) 
\end{align*}
which is proved in Yamamoto\cite{Y}. 
If $t=0$, we have the sum formula for $q$MZVs 
\begin{align*}
\sum_{\text{wt}(\k)=k, \text{dep}(\k)=n \atop \k : \text{admissible}} \q(\k) = \q(k) 
\end{align*}
which is proved in Bradley\cite{B}. 
And also, if $t=1$, we have  the sum formula for $q$MZSV
\begin{align*}
\sum_{\text{wt}(\k)=k, \text{dep}(\k)=n \atop \k : \text{admissible}} \qs(\k) = \f{1}{k-1} {k-1 \choose n-1}  \sum_{l=0}^{n-1} {n-1 \choose l} (k-1-l) (1-q)^l \q(k-l) 
\end{align*}
which is proved in Ohno-Okuda\cite{OO}. 
\end{rmk}
\begin{proof}[Proof of Theorem \ref{SF}]
Using the $q$-analogue of the Kummer-Thomae-Whipple formula (see \cite{G} for example)
\begin{align*}
{}_{3} \phi_{2} 
 \left[ \begin{array}{c} a_1, a_2, a_3 \\ b_1, b_2 \end{array} ; q, \f{b_1 b_2}{a_1 a_2 a_3} \right] 
= \f{\left(\f{b_1}{a_1}; q \right)_{\infty}\left(\f{b_1b_2}{a_2a_3} ; q \right)_{\infty}}{\left(b_1 ; q \right)_{\infty}\left( \f{b_1b_2}{a_1 a_2 a_3} ; q\right)_{\infty}}
{}_{3} \phi_{2} 
 \left[ \begin{array}{c} a_1, \f{b_2}{a_2}, \f{b_2}{a_3} \\ b_2, \f{b_1b_2}{a_2 a_3} \end{array} ; q, \f{b_1}{a_1} \right],  
\end{align*}
we have
\begin{align}
& {}_{3} \phi_{2} \nonumber
  \left[ \begin{array}{c} q, a_1, a_2 \\ b_1, b_2 \end{array} ; q, \f{q\{ 1+(1-q)(1-t)u_2 \}}{1-(1-q)t u_2} \right] \\ \nonumber
 & = {}_{3} \phi_{2} \left[ \begin{array}{c} q, a_1, a_2 \\ b_1, b_2 \end{array} ; q, \f{b_1 b_2}{q a_1 a_2} \right]\\ \nonumber
 & = \f{\left(\f{b_1}{q}; q \right)_{\infty}\left(\f{b_1b_2}{a_1a_2} ; q \right)_{\infty}}{\left(b_1 ; q \right)_{\infty}\left( \f{b_1b_2}{q a_1 a_2} ; q\right)_{\infty}}
{}_{3} \phi_{2} 
 \left[ \begin{array}{c} q, \f{b_2}{a_1}, \f{b_2}{a_2} \\ b_2, \f{b_1b_2}{a_1 a_2} \end{array} ; q, \f{b_1}{q} \right] \\
 & = \f{1-\f{b_1}{q}}{1-\f{b_1b_2}{q a_1 a_2}}
{}_{3} \phi_{2} \label{eq2}
 \left[ \begin{array}{c} q, \f{b_2}{a_1}, \f{b_2}{a_2} \\ b_2, \f{b_1b_2}{a_1 a_2} \end{array} ; q, \f{b_1}{q} \right]. 
\end{align}
Putting $u_3 = u_1 u_2$ in Corollary \ref{cor}, we have 
$$\alpha_1 = 0,\ \alpha_2 = \f{-u_1 + (1-t) u_2}{1+(1-q)u_1},\ a_1 = q,\ a_2 = \f{q \{1+(1-q)u_1\}}{1+(1-q)(1-t)u_2}, $$
$$\beta_1 = 0,\ \beta_2 = \f{-u_1-t u_2}{1+(1-q)u_1},\ b_1 = q^2,\ b_2 = \f{q^2 \{1+(1-q)u_1\}}{1-(1-q)t u_2}, $$
$$(1-qu_1)(1-tu_2) - tqu_3 = 1-qu_1 - t u_2. $$
Then by \eqref{eq2},  we get
$$\Psi^t_0 \nonumber
 = \f{q u_1 u_2}{1-q u_1 - t u_2} \c \f{1-q}{1-\f{b_2}{a_2}}
{}_{3} \phi_{2} 
 \left[ \begin{array}{c} q, \f{b_2}{q}, \f{b_2}{a_2} \\ b_2, \f{q b_2}{a_2} \end{array} ; q, q \right]. $$
By the definition of ${}_{3}\phi_{2}$, 
\begin{align}
\Psi^t_0 \nonumber
 & = \f{q(1-q)u_1 u_2}{(1-qu_1 - t u_2) \left( 1-\f{b_2}{a_2} \right)} \sum_{n=0}^{\infty} \f{1-\f{b_2}{q}}{1-b_2 q^{n-1}}\c \f{1-\f{b_2}{a_2}}{1-q^n \f{b_2}{a_2}} q^n \\ 
 & = \f{q(1-q)u_1 u_2 \left( 1-\f{b_2}{q} \right)}{(1-qu_1 - t u_2)} \sum_{n=0}^{\infty} \f{q^n}{(1-b_2 q^{n-1})\left(1-q^n \f{b_2}{a_2}\right)}. \label{eq4}
\end{align}
Since we have 
$$1-\f{b_2}{q} = \f{(1-q)(1-q u_1 - tu_2)}{1-(1-q)t u_2}$$
and 
\begin{align*}
& (1-b_2 q^{n-1})\left(1-q^n \f{b_2}{a_2}\right) \\
& = \f{\{ 1-q^{n+1} -(1-q)(t u_2 + q^{n+1} u_1) \} 
 \{ 1-q^{n+1} - (1-q) (t u_2 + q^{n+1}(1-t) u_2)\}}{\{ 1-(1-q)t u_2\}^2}, 
 \end{align*} 
we calculate the right-hand side of \eqref{eq4} as 
\begin{align}
\Psi^t_0 \nonumber
& = u_1 u_2 \{ 1-(1-q)t u_2 \}
\sum_{n=1}^{\infty} \f{q^n}{\{ [n] - (t u_2 + q^n u_1) \} \{ [n] - ( t + q^n (1-t)) u_2 \} } \\ \nonumber
& = u_1 u_2 \{ 1-(1-q)t u_2 \} \sum_{n=1}^{\infty} \f{q^n}{[n]^2} \sum_{m, l =0}^{\infty} \left( \f{t u_2 + q^n u_1}{[n]}\right)^m \left(\f{ t+ q^n (1-t)}{[n]} \right)^l u_2^l\\ \nonumber
& =  u_1 u_2 \{ 1-(1-q)t u_2 \} \\ \nonumber
& \quad \times \sum_{n=1}^{\infty} \f{q^n}{[n]^2} 
\sum_{i, j \geq 0} {i+j \choose i}\f{(tu_2)^i (q^n u_1)^j}{[n]^{i+j}}
\sum_{l, m \geq 0} {l+m \choose l}\f{t^l (q^n (1-t))^m}{[n]^{l+m}} u_2^{l+m}\\ \nonumber
& =  u_1 u_2 \{ 1-(1-q)t u_2 \} \\ 
& \quad \times \sum_{i, j, l, m \geq 0} {i+j \choose i} {l+m \choose l}  t^{i+l} (1-t)^m u_1^j u_2^{i+l+m}
\sum_{n=1}^{\infty}\f{q^{n(j+m+1)}}{[n]^{i+j+l+m+2}}. \label{eq5}
\end{align} 
Because of 
$$q^{nm} = \sum_{i=0}^l { l \choose i} (1-q)^i [n]^i q^{n(m+l-i)}$$
for any $l \geq 0$, 
the right-hand side of \eqref{eq5} turns into  
\begin{align*}
& u_1 u_2 \{ 1-(1-q) t u_2 \} \sum_{i, j, l, m \geq 0} {i+j \choose i} {l+m \choose l} t^{i+l} (1-t)^m
u_1^{j} u_2^{i+l+m} \\
&\quad \times  \sum_{n=1}^{\infty} \sum_{p=0}^{i+l} {i+ l \choose p} \f{(1-q)^p [n]^p q^{n(i+ j+m+l+1-p)}}{[n]^{i+j+l+m+2}}\\
& = u_1 u_2 \{ 1-(1-q) t u_2 \} \sum_{i, j, l, m \geq 0} {i+j \choose i} {l+m \choose l} t^{i+l} (1-t)^m u_1^{j} u_2^{i+l+m} \\
& \quad \times \sum_{p=0}^{i+l} {i+ l \choose p} (1-q)^p \q(2+i+j+l+m-p)\\
& = \sum_{{i, j, l, m \geq 0 \atop 0 \leq p \leq i+l }} {i+j \choose i} {l+m \choose l}{i+ l \choose p} t^{i+l} (1-t)^m
(1-q)^p \q(2+i+j+l+m-p) \\
& \quad \times u_1^{j+1} u_2^{i+l+m+1} \\
 & -  \sum_{{i, j, l, m \geq 0 \atop 0 \leq p \leq i+l }} {i+j \choose i} {l+m \choose l}{i+ l \choose p} t^{i+l+1} (1-t)^m
(1-q)^{p+1} \q(2+i+j+l+m-p)\\
& \quad \times u_1^{j+1} u_2^{i+l+m+2}. 
\end{align*}
Comparing the coefficients of $u_1^{k-n}u_2^{n}$ 
for $k, n$ with $k > n \geq 1$, we have
\begin{align*}
& \sum_{\text{wt}(\k)=k, \text{dep}(\k)=n \atop \k : \text{admissible}} \tq(\k) \\
& = 
\sum_{{i+l+m=n-1 \atop i, l, m \geq 0}  \atop 0 \leq p \leq i+l} 
 {i+k-n-1 \choose i} {l+m \choose l}{i+ l \choose p} t^{i+l} (1-t)^m
(1-q)^p \q(k-p)\\
& -\sum_{{i+l+m=n-2  \atop i, l, m \geq 0}  \atop 1 \leq p \leq i+l+1} 
 {i+k-n-1 \choose i} {l+m \choose l}{i+ l \choose p-1} t^{i+l+1} (1-t)^m
(1-q)^{p} \q(k-p)\\
& = 
\sum_{{j+m=n-1 \atop j, m \geq 0}  \atop 0 \leq p \leq j} 
\sum_{i=0}^{j}
 {i+k-n-1 \choose i} {j-i+m \choose j-i}{j \choose p} t^{j} (1-t)^m (1-q)^p \q(k-p)\\
& -\sum_{{j+m=n-1  \atop  j \geq 1, m \geq 0}  \atop 1 \leq p \leq j} 
\sum_{i=0}^{j-1}
 {i+k-n-1 \choose i} {j-i+m-1 \choose j-i-1}{j-1 \choose p-1} t^j (1-t)^m (1-q)^{p} \q(k-p). \\
\end{align*}
%
Using 
$\sum_{i=0}^j {i+k \choose i} {j-i+m \choose j-i} = {k+m+j+1 \choose j}$, 
\begin{align*}
& \sum_{\text{wt}(\k)=k, \text{dep}(\k)=n \atop \k : \text{admissible}} \tq(\k) \\
& = \left\{ \sum_{{j+m=n-1 \atop j, m \geq 0}  \atop 0 \leq p \leq j} 
 {k-n+m+j \choose j} {j \choose p}\right.
 \left. -\sum_{{j+m=n-1  \atop  j \geq 1, m \geq 0}  \atop 1 \leq p \leq j} 
 {k-n+m+j-1 \choose j-1} {j-1 \choose p-1} \right\}\\
 & \qquad \times t^j (1-t)^m (1-q)^{p} \q(k-p) \\
& = \left\{ \sum_{0 \leq j \leq n-1  \atop 0 \leq p \leq j} {k-1 \choose j} {j \choose p} 
-\sum_{1 \leq j \leq n-1  \atop 1 \leq p \leq j} {k-2 \choose j-1} {j-1 \choose p-1} \right\} \\
& \qquad \times t^j (1-t)^{n-1-j} (1-q)^{p} \q(k-p) \\
& = \left\{ \sum_{0 \leq j \leq n-1  \atop 0 \leq p \leq j} {k-1 \choose j} {j \choose p} 
-\sum_{1 \leq j \leq n-1  \atop 1 \leq p \leq j} \f{p}{k-1}{k-1 \choose j} {j \choose p} \right\} \\
& \qquad \times t^j (1-t)^{n-1-j} (1-q)^{p} \q(k-p) \\
& = \f{1}{k-1} \sum_{0 \leq p \leq j \leq n-1} (k-1-p) {k-1 \choose j} {j \choose p} t^j (1-t)^{n-1-j} (1-q)^{p} \q(k-p).  
\end{align*}
Hence we obtain the theorem. 
\end{proof}
\subsubsection{Sum formula for $t$-$q$MZVs II}
As another application of Corollary \ref{cor}, we establish the following sum formula (of full height) for $t$-$q$MZVs. 
\begin{thm}\label{full}
We have 
\begin{align*}
& \sum_{k, l \geq 0} \left( \sum_{\k \in I_0(k, l, l)} \tq(\k) \right) u_1^{k-2l} u_3^l \\
& = \exp \left\{ \sum_{n=2}^{\infty} \q(n) \sum_{m=0}^{\infty} \f{(q-1)^m}{m+n} (w_1^{m+n} + w_2^{m+n} -z_1^{m+n} - z_2^{m+n}) \right\} -1, 
\end{align*}
where $z_1, z_2, w_1, w_2$ are determined by  
$$
\left\{ \begin{array}{l}
z_1 + z_2 = u_1 -(1-t)(1-q) u_3, \\ z_1 z_2 = (1-t) u_3, 
\end{array} \right. 
\left\{ \begin{array}{l}
w_1 + w_2 = u_1 +t(1-q) u_3, \\ w_1 w_2 = -tu_3. 
\end{array} \right. $$
\end{thm}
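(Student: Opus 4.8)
The plan is to specialize Corollary~\ref{cor}, or rather the equivalent full-height data hidden inside its proof, to the "full-height slice" $h_1 = l$, which on the level of generating functions means discarding all monomials in $\Psi^t_0$ that are not pure in $u_1$ and $u_3$. Concretely, in $\Psi^t_0(u_1,u_2,u_3)$ the variable $u_2$ tracks $l - h_1$, so setting $u_2 = 0$ extracts exactly $\sum_{k,l\ge 0}\bigl(\sum_{\k\in I_0(k,l,l)}\tq(\k)\bigr)u_1^{k-2l}u_3^{l}$. So the first step is to put $u_2 = 0$ in \eqref{r1} (equivalently \eqref{r2}) and track what happens to all the auxiliary data: the prefactor $qu_3/\bigl((1-qu_1)(1-tu_2)-tqu_3\bigr)$ becomes $qu_3/(1-qu_1-tqu_3)$, the argument of ${}_3\phi_2$ becomes simply $q$, and the parameters $\alpha_1,\alpha_2,\beta_1,\beta_2$ (hence $a_1,a_2,b_1,b_2$) simplify because the $u_2$-terms drop out of their defining symmetric-function equations: one gets $\alpha_1+\alpha_2 = -u_1/(1+(1-q)u_1)$, $\alpha_1\alpha_2 = (1-t)u_3/(1+(1-q)u_1)$, and similarly $\beta_1+\beta_2 = -u_1/(1+(1-q)u_1)$, $\beta_1\beta_2 = -tu_3/(1+(1-q)u_1)$.

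Next I would evaluate the resulting ${}_3\phi_2\bigl[\begin{smallmatrix} q, a_1, a_2 \\ b_1, b_2\end{smallmatrix};q,q\bigr]$. The key point is that the $q$-argument is now $q = b_1 b_2/(q\,a_1 a_2)$ precisely when $a_1 a_2 = b_1 b_2/q^2$, which here reads $(1+(1-q)\alpha_1)(1+(1-q)\alpha_2)$ versus $(1+(1-q)\beta_1)(1+(1-q)\beta_2)$ — these are not equal in general, so instead I expect to mimic the $t=0$ argument of \S2.1.1 exactly: write ${}_3\phi_2[\ldots;q,q]$ as a ${}_2\phi_1$ after one of the numerator parameters becomes a power of $q$, then apply Heine's summation \eqref{Heine} to get an infinite product, and finally convert that product into an exponential of $q$-zeta values using \eqref{log}. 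Carrying this through, the four "roots" appearing in the product will be $u_1,$ a companion root, and two roots built from $\alpha$'s and from $\beta$'s; after the substitution $x_j\leftrightarrow u_j$ via \eqref{x1}--\eqref{xj} these reorganize into the two pairs $(z_1,z_2)$ with $z_1+z_2 = u_1-(1-t)(1-q)u_3$, $z_1 z_2 = (1-t)u_3$ and $(w_1,w_2)$ with $w_1+w_2 = u_1+t(1-q)u_3$, $w_1 w_2 = -tu_3$. In other words, $z_1,z_2$ are the images of the $\alpha$-data and $w_1,w_2$ the images of the $\beta$-data under the change of variables, up to the overall $1+(1-q)u_1$ normalization, which is exactly the "$+1$" correction term that \eqref{log} produces.

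The last step is bookkeeping: check that the rational prefactor $qu_3/(1-qu_1-tqu_3)$ combined with the "$-1$" coming from the ${}_3\phi_2 \to {}_2\phi_1$ normalization and the product-to-exponential conversion collapses to give exactly $\exp\{\cdots\} - 1$ with no leftover rational factor, just as in the $t=0$ case where the prefactor $u_3/(u_3-u_1u_2)$ was absorbed. I expect the main obstacle to be precisely this: verifying that the prefactor and the normalization constant from Heine's formula cancel, and identifying the two quadratics for $(z_1,z_2)$ and $(w_1,w_2)$ — this is a somewhat delicate symmetric-function computation because the roots are symmetric functions of $\alpha_i$ and $\beta_i$ which are themselves only implicitly defined, and one must push them through the nonlinear substitution \eqref{x1}--\eqref{xj} and the nonlinear map $\alpha\mapsto a = q/(1+(1-q)\alpha)$. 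A clean way to organize this is to never solve for individual $\alpha_i,\beta_i$ but to work entirely with the products $\prod_i(1-a_i q^{-1}x)$ etc., i.e.\ directly with the generating polynomials, so that Heine's product is expressed through elementary symmetric functions only; then the claimed formulas for $z_1+z_2,z_1z_2,w_1+w_2,w_1w_2$ follow by matching coefficients. Finally, taking $q\to 1$ should recover Aoki--Kombu--Ohno's full-height generating function, which serves as a consistency check on the signs and the $(1-q)$-powers.
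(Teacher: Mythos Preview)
Your overall architecture is right — specialize Corollary~\ref{cor} at $u_2=0$, collapse to a ${}_2\phi_1$, apply Heine \eqref{Heine}, then \eqref{log} — and this is exactly what the paper does. But you have a computational slip that derails the crucial middle step.

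When you set $u_2=0$ in the defining relations of Corollary~\ref{cor}, the terms $(1-q)(u_1u_2-u_3)$ do \emph{not} vanish: they become $-(1-q)u_3$. So
\[
\alpha_1+\alpha_2=\frac{-u_1-(1-t)(1-q)u_3}{1+(1-q)u_1},\qquad
\beta_1+\beta_2=\frac{-u_1+t(1-q)u_3}{1+(1-q)u_1},
\]
not $-u_1/(1+(1-q)u_1)$ as you wrote. With the correct values one finds
\[
\bigl(1+(1-q)\alpha_1\bigr)\bigl(1+(1-q)\alpha_2\bigr)=\bigl(1+(1-q)\beta_1\bigr)\bigl(1+(1-q)\beta_2\bigr)=\frac{1}{1+(1-q)u_1},
\]
hence $a_1a_2=q^2\{1+(1-q)u_1\}$ and $b_1b_2=q^4\{1+(1-q)u_1\}=q^2a_1a_2$. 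So your dismissal ``these are not equal in general'' is wrong precisely here, and that equality is the whole point: the argument of the ${}_3\phi_2$ really is $b_1b_2/(qa_1a_2)$, so the $q$-Kummer--Thomae--Whipple formula \eqref{eq2} applies and produces a ${}_3\phi_2$ whose lower parameter $b_1b_2/(a_1a_2)$ equals $q^2$. It is \emph{this} transformed series that reduces to a ${}_2\phi_1$ after an index shift; your fallback plan of ``mimicking \S2.1.1'' cannot work as stated, because the mechanism there (namely $b_1=q^2$) is specific to $t=0$ and fails at $u_2=0$ for general $t$. Once you go through KTW, the prefactor computation showing that everything in front of $\bigl({}_2\phi_1-1\bigr)$ collapses to $1$ is exactly the ``bookkeeping'' you anticipated, and the identification of $(z_i,w_i)$ via $z_i=-\alpha_i/(1+(1-q)\alpha_i)$, $w_i=-\beta_i/(1+(1-q)\beta_i)$ then yields the claimed quadratics.
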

\begin{rmk}
Theorem \ref{full} implies that the sum of $t$-$q$MZVs $\tq(\k)$ with wt($\k$)$=k$ and dep($\k$)$=$ht($\k$)$=l$ can be written as a polynomial in Riemann $q$-zeta values. 
If $t=1$, this formula is reduced to a formula for $q$MZSVs with full height, which is proved in Takeyama\cite{Take}. 
Taking the limit as $q \to 1$, we obtain Li-Qin\cite[Corollary 3.6]{LQ}. 
\end{rmk} 
\begin{proof}[Proof of Theorem \ref{full}]
Applying \eqref{eq2} to Corollary \ref{cor} and  
substituting $u_2=0$, 
we have 
$$\Psi^t_0 = \f{q u_3}{1-q u_1 -t q u_3} \f{1-\f{b_1}{q}}{1-\f{b_1b_2}{q a_1a_2}} 
{}_{3} \phi_{2} 
 \left[ \begin{array}{c} q, \f{b_2}{a_1}, \f{b_2}{a_2} \\ b_2, q^2 \end{array} ; q, \f{b_1}{q} \right].$$ 
By the definition of ${}_3\phi_2$, 
\begin{align*}
\Psi^t_0 & = \f{q u_3}{1-q u_1 -t q u_3} \f{1-\f{b_1}{q}}{1-\f{b_1b_2}{q a_1a_2}} 
\sum_{n=0}^{\infty} \f{ \left( \f{b_2}{a_1}; q\right)_n \left( \f{b_2}{a_2}; q \right)_n}{(q^2; q)_n (b_2; q)_n} \left( \f{b_1}{q} \right)^n\\
& = \f{q u_3}{1-q u_1 -t q u_3} \f{1-\f{b_1}{q}}{1-\f{b_1b_2}{q a_1a_2}} 
\sum_{n=1}^{\infty} \f{(1-q) \left( 1- \f{b_2}{q}\right)}{\left( 1-\f{b_2}{q a_1} \right) \left( 1-\f{b_2}{q a_2} \right)}\f{ \left( \f{b_2}{q a_1}; q\right)_n \left( \f{b_2}{q a_2}; q \right)_n}{(q; q)_n \left( \f{b_2}{q}; q \right)_n} \left( \f{b_1}{q} \right)^{n-1}\\
& = \f{q u_3}{1-q u_1 -t q u_3} \f{\left( 1-\f{b_1}{q} \right) \left( 1-\f{b_2}{q} \right) (1-q)}{\left( 1-\f{b_1b_2}{q a_1a_2} \right) \left( 1-\f{b_2}{q a_1} \right) \left(1-\f{b_2}{q a_2} \right)} \f{q}{b_1} \left\{ {}_2 \phi_1 \left[ \begin{array}{c} \f{b_2}{q a_1}, \f{b_2}{q a_2} \\ \f{b_2}{q} \end{array} ; q, \f{b_1}{q} \right] -1 \right\}.  
\end{align*} 
Because of 
$$a_1 a_2 = q^2\{ 1+(1-q)u_1 \},\ b_1b_2= q^4 \{ 1+(1-q)u_1\} = q^2a_1a_2, $$
$$\f{1}{a_1} + \f{1}{a_2} = \f{1}{q} \f{1+(1-q) u_1 - (1-t) (1-q)^2 u_3}{1+(1-q)u_1}, $$  and 
$$\f{1}{b_1} + \f{1}{b_2} = \f{1}{q^2} \f{2+(1-q) u_1 +t (1-q)^2 u_3}{1+(1-q)u_1}, $$
the quantity  
$$\f{\left( 1-\f{b_1}{q} \right) \left( 1-\f{b_2}{q} \right) (1-q)}{\left( 1-\f{b_1b_2}{q a_1a_2} \right) \left( 1-\f{b_2}{q a_1} \right) \left(1-\f{b_2}{q a_2} \right) b_1}$$
is calculated as 
\begin{align*}
\f{1-\f{1}{q}(b_1+b_2) + \f{1}{q^2}b_1b_2}{b_1 - \f{b_1b_2}{q}\left(\f{1}{a_1} + \f{1}{a_2} \right) +b_2}
& = \f{\f{1}{b_1b_2} - \f{1}{q} \left( \f{1}{b_1} + \f{1}{b_2} \right) + \f{1}{q^2} }{\left( \f{1}{b_1} + \f{1}{b_2} \right) - \f{1}{q} \left( \f{1}{a_1} + \f{1}{a_2} \right)}\\
& = \f{\f{(1-q)^2 (1-q u_1 - t q u_3)}{q^4 (1 + (1-q) u_1)}}{\f{(1-q)^2 u_3}{q^2 (1+ (1-q) u_1)}} 
= \f{1-q u_1 - t q u_3}{q^2 u_3}. 
\end{align*}
Therefore we have 
\begin{align}\label{eq6}
\Psi^t_0 = {}_2 \phi_1 \left[ \begin{array}{c} \f{b_2}{q a_1}, \f{b_2}{q a_2} \\ \f{b_2}{q} \end{array} ; q, \f{b_1}{q} \right] -1. 
\end{align}
Here using Heine's summation formula \eqref{Heine} 
and 
$$1-\f{q^n}{[n]}x = \f{1-q^n \{1+(1-q) x \}}{1-q^n}, $$
the right-hand side of \eqref{eq6} is calculated as
\begin{align}\label{eq7}
\Psi^t_0  = \f{\left( a_1 ; q \right)_{\infty} \left( a_2 ; q \right)_{\infty}}{\left( \f{b_1}{q} ; q \right)_{\infty} \left( \f{b_2}{q} ; q \right)_{\infty}} -1 
& = \prod_{n=1}^{\infty} \f{(1-a_1 q^{n-1}) (1-a_2 q^{n-1})}{\left( 1-\f{b_1}{q} q^{n-1} \right) \left( 1-\f{b_2}{q}q^{n-1} \right)}-1 \nonumber \\
& = \prod_{n=1}^{\infty} \f{\left(1-\f{q^{n}}{[n]} z_1 \right) \left(1-\f{q^{n}}{[n]} z_2 \right)}{\left(1-\f{q^{n}}{[n]} w_1 \right)\left(1-\f{q^{n}}{[n]} w_2 \right)}-1, 
\end{align}
where $$z_i = -\f{\alpha_i}{1 + (1-q)\alpha_i},\ w_i = -\f{\beta_i}{1 + (1-q)\beta_i}\quad (i=1, 2)$$ i.e. 
$$
\left\{ \begin{array}{l}
z_1 + z_2 = u_1 -(1-t)(1-q) u_3, \\ z_1 z_2 = (1-t) u_3, 
\end{array} \right. 
\left\{ \begin{array}{l}
w_1 + w_2 = u_1 +t(1-q) u_3, \\ w_1 w_2 = -tu_3. 
\end{array} \right. $$
Using the formula \eqref{log}, 
the right-hand side of \eqref{eq7} turns into   
\begin{align*}
\Psi^t_0  & = \exp \left\{ \f{1}{q-1} \log \f{\{1+ (1-q)z_1 \} \{1+ (1-q)z_2 \}}{\{1+ (1-q)w_1 \} \{1+ (1-q)w_2 \}}\sum_{n=1}^{\infty}\f{q^n}{[n]} \right. \\
& \quad \left. - \sum_{n=2}^{\infty} \q(n) \sum_{m=0}^{\infty} \f{(q-1)^m}{m+n} (z_1^{m+n} + z_2^{m+n} -w_1^{m+n} - w_2^{m+n} \right\} -1 \\
& = \exp \left\{ \sum_{n=2}^{\infty} \q(n) \sum_{m=0}^{\infty} \f{(q-1)^m}{m+n} (w_1^{m+n} + w_2^{m+n} -z_1^{m+n} - z_2^{m+n}) \right\} -1.  
\end{align*}
Hence we obtain the theorem. 
\end{proof}
\subsection{The case of $t = 0$}
When $t=0$ our main theorem reduces to the result in Li\cite{Li} as follows. 
Note that, in the case of $t=0$,  
$$\left\{ \begin{array}{ll}
\alpha_1 + \cs \alpha_{r+1} = x_2 - x_1, & \\
\d\sum_{1 \leq i_1 < \cs < i_j \leq r+1} \alpha_{i_1} \cs \alpha_{i_j} = x_{j+1} - x_1 x_j & j = 2, \ls, r+1, 
\end{array}\right. 
$$
and
$$\left\{ \begin{array}{ll}
\beta_1 + \cs \beta_{r+1} = - x_1, & \\
\d\sum_{1 \leq i_1 < \cs < i_j \leq r+1} \beta_{i_1} \cs \beta_{i_j} = 0, & j = 2, \ls, r+1.  
\end{array}\right. 
$$
and hence we may assume 
$$\beta_1 = \cs =\beta_r = 0, \ \beta_{r+1} = -x_1. $$
\begin{thm}\label{qLi}(Li\cite{Li})
We have 
\begin{align*} 
\Psi^0_0 & = 
\f{\{ 1+(1-q) u_1 \}^2}{u_{r+2} - u _1 u_{r+1}}
 \left\{ \sum_{j=0}^{r-1} A_j^\ast B_j^\ast {}_{r+1} \phi_{r} \left[ \begin{array}{c} a_1^\ast q^j, \ls, a_{r+1}^\ast q^j \\ \underbrace{q^{j+1}, \ls, q^{j+1}}_{r-1}, b^\ast q^j \end{array} ; q, \f{b^\ast}{a_1^\ast \cs a_{r+1}^\ast} \right] - A_0^\ast \right\},  
\end{align*}
where 
$$a_i^\ast = \f{a_i}{q} = \f{1}{1+(1-q)\alpha_i},\ b^\ast = \f{b_{r+1}}{q} = q \{ 1+(1-q)u_1 \}, $$
$$A^\ast_j = \sum_{m=j}^{r-1} c_m S_q(m, j) + \f{u_1 u_2}{\{ 1+(1-q)u_1\}^2} S_q(r-1, j), $$
$$B_j^\ast = \f{(a_1^\ast, \ls, a_{r+1}^\ast ; q)_j}{(\underbrace{q, \ls, q}_{r-1}, b^\ast ; q)_j} \left( \f{b^\ast}{(1-q)a_1^\ast \cs a_{r+1}^\ast} \right)^j,  $$
for $1 \leq i \leq r+1,\ 0 \leq j \leq r-1$ 
and $\alpha_i$'s are as above. 
\end{thm}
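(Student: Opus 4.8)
The plan is to obtain Li's formula by specializing the Main Theorem to $t=0$ and then transforming the resulting sum of ${}_{r+2}\phi_{r+1}$'s into the sum of ${}_{r+1}\phi_r$'s appearing in Theorem~\ref{qLi}. As recorded above, at $t=0$ one may take $\beta_1=\cdots=\beta_r=0$ and $\beta_{r+1}=-x_1$, so that by \eqref{bi} and \eqref{x1} we have $b_1=\cdots=b_r=q^2$ and $b_{r+1}=q^2/(1-(1-q)x_1)=q^2\{1+(1-q)u_1\}=q\,b^\ast$, while $a_i=q\,a_i^\ast$ for every $i$. Consequently the denominator $1-qu_1-t(1-qu_1)\sum q^{k-2}u_k-tq^ru_{r+2}$ collapses to $1-qu_1$, the argument $b_1\cdots b_{r+1}/(q^ra_1\cdots a_{r+1})$ of every factor collapses to $b^\ast/(a_1^\ast\cdots a_{r+1}^\ast)$, and the $j$-th basic hypergeometric factor becomes a ${}_{r+2}\phi_{r+1}$ whose upper parameters are $q^{j+1}$ together with $a_1^\ast q^{j+1},\ldots,a_{r+1}^\ast q^{j+1}$ and whose lower parameters are $q^{j+2}$ (with multiplicity $r$) and $b^\ast q^{j+1}$.

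First I would isolate two elementary identities. From the defining recursion $S_q(m+1,j+1)=q^jS_q(m,j)+[j+1]S_q(m,j+1)$ of the $q$-Stirling numbers one gets, after a short computation and using $S_q(j,j+1)=0$,
$$A_j=q^jA_j^\ast+[j+1]A_{j+1}^\ast\qquad(0\le j\le r-1),$$
with the convention $A_r^\ast=0$ (which is forced since then both the empty sum and $S_q(r-1,r)$ vanish). Second, the Pochhammer identities $(q^{j+1};q)_n/(q^{j+2};q)_n=(1-q^{j+1})/(1-q^{n+j+1})$ and $(aq;q)_n=(a;q)_{n+1}/(1-a)$ let me (i) express $\widetilde{B_j}$ as an explicit scalar multiple of $B_j^\ast$, and (ii) rewrite the $n$-th term of the specialized ${}_{r+2}\phi_{r+1}$ at level $j$ as $R_n^{(j)}$ times the $n$-th term of Li's ${}_{r+1}\phi_r$, which I call $G_j$, where
$$R_n^{(j)}=\Bigl(\frac{1-q^{j+1}}{1-q^{n+j+1}}\Bigr)^{\!r}\,\frac{1-b^\ast q^j}{1-b^\ast q^{n+j}}\,\prod_{i=1}^{r+1}\frac{1-a_i^\ast q^{n+j}}{1-a_i^\ast q^j}.$$
Combining (i) and (ii), the awkward factors cancel neatly and one finds $\widetilde{B_j}\cdot{}_{r+2}\phi_{r+1}(\cdot)=q(1-q)^r\dfrac{1-b^\ast}{\prod_i(1-a_i^\ast)}\,B_j^\ast\sum_{n\ge0}W(q^{n+j})\,G_j[n]$ with $W(y)=\prod_i(1-a_i^\ast y)\big/\bigl\{(1-qy)^r(1-b^\ast y)\bigr\}$.

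The core step is then to insert $A_j=q^jA_j^\ast+[j+1]A_{j+1}^\ast$ into $\Psi^0_0=\dfrac{\{1+(1-q)u_1\}^2}{1-qu_1}\sum_{j=0}^{r-1}A_j\widetilde{B_j}\,{}_{r+2}\phi_{r+1}(\cdot)$ and reindex the double sum over $(j,n)$, shifting $j\mapsto j-1$ in the part carrying $A_{j+1}^\ast$ and using $A_r^\ast=0$ to discard the top term; this leaves, for each $j$, the quantity $A_j^\ast$ multiplied by a pairing of a $W$-weighted $G_j$ with a $W$-weighted $G_{j-1}$. The heart of the matter is that each such weighted pair collapses: using the partial-fraction structure of $W$ together with one more Pochhammer shift relating $G_{j-1}$ to $G_j$, the pair reduces to a constant multiple of $B_j^\ast G_j$ for $j\ge 1$, while the lone $j=0$ term reduces to a constant multiple of $B_0^\ast G_0-1$, the extra $-1$ being the $n=0$ summand that the reindexing does not match; this is exactly Li's bracket $\sum_{j=0}^{r-1}A_j^\ast B_j^\ast G_j-A_0^\ast$. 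It remains to reconcile the global prefactor: one checks that $\{1+(1-q)u_1\}^2/(1-qu_1)$ multiplied by all scalars extracted along the way equals $\{1+(1-q)u_1\}^2/(u_{r+2}-u_1u_{r+1})$; here $1-b^\ast=(1-q)(1-qu_1)$ kills the $1-qu_1$ in the denominator, the identity $1-a_i^\ast=(1-q)\alpha_i a_i^\ast$ brings in the factor $\alpha_1\cdots\alpha_{r+1}=x_{r+2}-x_1x_{r+1}$, and $x_{r+2}-x_1x_{r+1}=(u_{r+2}-u_1u_{r+1})/\{1+(1-q)u_1\}$ by the change of variables \eqref{x1}, \eqref{xj}.

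I expect the collapse of the $W$-weighted $(G_j,G_{j-1})$ pairs to be the main obstacle, because no individual ${}_{r+2}\phi_{r+1}$ reduces to a series of lower order on its own ($W$ has a pole of order $r$ at $y=q^{-1}$): the reduction of order is genuinely a property of the complete $j$-sum and only emerges once the Pochhammer shifts are combined with the $q$-Stirling identity $A_j=q^jA_j^\ast+[j+1]A_{j+1}^\ast$ in precisely the right order; carrying the resulting scalar prefactors through this and then matching them against $u_{r+2}-u_1u_{r+1}$ is the bookkeeping-heavy part. As a consistency check, in the case $r=1$ one can argue directly: Corollary~\ref{cor} at $t=0$, Heine's summation \eqref{Heine} and the logarithmic identity \eqref{log} transform \eqref{r2} into the closed form displayed in the reduction to Okuda--Takeyama, which is precisely Theorem~\ref{qLi} for $r=1$ (there $G_0$ is a ${}_2\phi_1$ that Heine's formula sums).
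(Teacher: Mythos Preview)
Your overall strategy is the paper's: specialize to $t=0$ so that $b_1=\cdots=b_r=q^2$ and $b_{r+1}=qb^\ast$, use the $q$-Stirling identity $A_j=q^jA_j^\ast+[j+1]A_{j+1}^\ast$ (with $A_r^\ast=0$), reindex, collapse, and finish by reducing the prefactor via $x_{r+2}-x_1x_{r+1}=(u_{r+2}-u_1u_{r+1})/\{1+(1-q)u_1\}$. The only substantive divergence is in how you propose to execute the collapse. You package the excess Pochhammer factors into the weight $W(y)=\prod_i(1-a_i^\ast y)/\{(1-qy)^r(1-b^\ast y)\}$ and anticipate needing its ``partial-fraction structure'' to reduce the order; you flag this as the main obstacle. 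The paper shows it is much simpler: after inserting $A_j=q^jA_j^\ast+[j+1]A_{j+1}^\ast$ and writing out the two resulting series (the paper's $\sum_2$ and $\sum_3$), the term carrying $q^jA_j^\ast$ contributes $q^j(q^{j+1};q)_{n-1}/(q;q)_{n-1}$ and the reindexed term carrying $[j]A_j^\ast$ contributes $(q^j;q)_n/(q;q)_n$, and these combine via the one-line identity
\[
\frac{q^j(q^{j+1};q)_{n-1}}{(q;q)_{n-1}}+\frac{(q^j;q)_n}{(q;q)_n}=\frac{(q^{j+1};q)_n}{(q;q)_n}
\]
to give exactly the $n$-th term of the desired ${}_{r+1}\phi_r$. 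No partial fractions are needed; the pole of $W$ at $y=q^{-1}$ never enters because the above identity cancels the extra $(q^{j+2};q)_n$ denominator directly. The lone $j=0$ piece (the paper's $\sum_1$) similarly shifts to a ${}_{r+1}\phi_r$ minus its constant term, which is your predicted $-A_0^\ast$. So your plan is correct, but the step you feared is in fact the easiest: drop $W$ and work with the raw Pochhammer symbols, and the reduction of order is a single binomial-style identity rather than an analytic cancellation.
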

\begin{proof}
Since $\beta_1 = \cs =\beta_r = 0, \ \beta_{r+1} = -x_1$, 
we see that 
$$b_1 = \cs = b_r = q^2,\ 
b_{r+1} = \f{q^2}{1-(1-q)x_1} = q^2\{1+(1-q)u_1 \}. $$
Since
$$S_q(m+1, j+1) = q^j S_q(m, j) + [j+1] S_q(m, j+1)\ \ (0 \leq j \leq m), $$
$A_j$'s in the main theorem are written as 
$$A_j = q^j A_j^\ast + [j+1]A_{j+1}^\ast \quad (0 \leq j \leq r-1,\ A_r^\ast = 0). $$
We also find 
\begin{align*}
\widetilde{B_j} & = \f{q (1-q )^{r+1} (1-b^\ast) a_1^\ast \cs a_{r+1}^\ast}{(1-q^{j+1}) b^\ast (1-a_1^\ast) \cs (1-a_{r+1}^\ast)} B_{j+1}^\ast\\
& = \f{q (1-q )^{r} (1-b^\ast)}{(1-q^{j+1}) (1-a_1^\ast) \cs (1-a_{r+1}^\ast)} \f{ (1-a_1^\ast q^j) \cs (1-a_{r+1}^\ast q^j)}{(1-q^{j+1})^{r-1} (1-b^\ast q^j)} B_{j}^\ast. 
\end{align*}
Applying these to the main theorem, we have 
\begin{align*}
 \Psi^0_0 & = \f{\{ 1+(1-q) u_1 \}^2}{1-q u_1}
 \\
&\times \left\{ A_0^\ast q \ {}_{r+2} \phi_{r+1} \left[ \begin{array}{c} q, a_1^\ast q, \ls, a_{r+1}^\ast q \\ \underbrace{q^2, \ls, q^2}_r, b^\ast q \end{array} ; q, \f{b^\ast}{a_1^\ast \cs a_{r+1}^\ast} \right] \right. 
\\
& \quad + \sum_{j=1}^{r-1} q^j A_j^\ast \f{q (1-q )^{r} (1-b^\ast)}{(1-q^{j+1}) (1-a_1^\ast) \cs (1-a_{r+1}^\ast)} \f{ (1-a_1^\ast q^j) \cs (1-a_{r+1}^\ast q^j)}{(1-q^{j+1})^{r-1} (1-b^\ast q^j)} B_{j}^\ast \nonumber\\
& \qquad \times {}_{r+2} \phi_{r+1} \left[ \begin{array}{c} q^{j+1}, a_1^\ast q^{j+1}, \ls, a_{r+1}^\ast q^{j+1} \\ \underbrace{q^{j+2}, \ls, q^{j+2}}_r, b^\ast q^{j+1} \end{array} ; q, \f{b^\ast}{ a_1^\ast \cs a_{r+1}^\ast} \right] 
\\
& \quad + \sum_{j=1}^{r-1} [j] A_j^\ast \f{q (1-q )^{r+1} (1-b^\ast) a_1^\ast \cs a_{r+1}^\ast}{(1-q^{j}) b^\ast (1-a_1^\ast) \cs (1-a_{r+1}^\ast)} B_{j}^\ast 
\\
& \qquad \left. \times {}_{r+2} \phi_{r+1} \left[ \begin{array}{c} q^{j}, a_1^\ast q^{j}, \ls, a_{r+1}^\ast q^{j} \\ \underbrace{q^{j+1}, \ls, q^{j+1}}_r, b^\ast q^{j} \end{array} ; q, \f{b^\ast}{ a_1^\ast \cs a_{r+1}^\ast} \right] \right\}. 
\end{align*}
Let $\sum_1, \sum_2, \sum_3$ be the first, the second, and the third terms in the bracket of the right-hand side, respectively. 
Then we have
\begin{align*}
{\sum}_1 
& = A_0^\ast q \sum_{n=0}^{\infty}  \f{(a_1^\ast q, \ls, a_{r+1}^\ast q ; q)_n}{(\underbrace{q^2, \ls, q^2}_{r}, b^\ast q ; q)_n} \left( \f{b^\ast}{a_1^\ast \cs a_{r+1}^\ast} \right)^n \\
& = A_0^\ast q \f{(1-q)^r (1-b^\ast)}{(1-a_1^\ast) \cs (1-a_{r+1}^\ast)} \f{a_1^\ast \cs a_{r+1}^\ast}{b^\ast} 
\sum_{n=1}^{\infty}  \f{(a_1^\ast, \ls, a_{r+1}^\ast ; q)_n}{(\underbrace{q, \ls, q}_{r}, b^\ast ; q)_n} \left( \f{b^\ast}{a_1^\ast \cs a_{r+1}^\ast} \right)^n \\
& = A_0^\ast q \f{(1-q)^r (1-b^\ast) a_1^\ast \cs a_{r+1}^\ast }{b^\ast(1-a_1^\ast) \cs (1-a_{r+1}^\ast)} 
\left\{ {}_{r+1} \phi_{r} \left[ \begin{array}{c} a_1^\ast, \ls, a_{r+1}^\ast \\ \underbrace{q, \ls, q}_{r-1}, b^\ast \end{array} ; q, \f{b^\ast}{a_1^\ast \cs a_{r+1}^\ast} \right] -1 \right\},  
\end{align*}
\begin{align*}
{\sum}_2 & = \f{q(1-q)^r (1-b^\ast) a_1^\ast \cs a_{r+1}^\ast }{b^\ast (1-a_1^\ast) \cs (1-a_{r+1}^\ast)} \\
& \quad \times \sum_{j=1}^{r-1} A_j^\ast B_j^\ast \sum_{n=1}^{\infty} 
\f{q^{j} (q^{j+1} ; q)_{n-1} (a_1^\ast q^j, \ls, a_{r+1}^\ast q^j; q)_n}{(q ; q)_{n-1} (\underbrace{q^{j+1}, \ls, q^{j+1}}_r, b^\ast q^j ; q)_n} 
\left( \f{b^\ast}{a_1^\ast \cs a_{r+1}^\ast} \right)^n
\end{align*}
(using $(1-a^\ast q^j)(a^\ast q^{j+1} ; q)_n = (a^\ast q^j ; q)_{n+1}$), and 
\begin{align*}
{\sum}_3 
& = \f{q (1-q)^r (1-b^\ast) a_1^\ast \cs a_{r+1}^\ast }{b^\ast (1-a_1^\ast) \cs (1-a_{r+1}^\ast)}\\
& \quad \times \sum_{j=1}^{r-1} A_j^\ast B_j^\ast \left\{ 1+ \sum_{n=1}^{\infty} 
\f{(q^{j} ; q)_{n} (a_1^\ast q^j, \ls, a_{r+1}^\ast q^j; q)_n}{(q ; q)_{n} (\underbrace{q^{j+1}, \ls, q^{j+1}}_r, b^\ast q^j ; q)_n} 
\left( \f{b^\ast}{a_1^\ast \cs a_{r+1}^\ast} \right)^n \right\}. 
\end{align*}
Since 
$$\f{q^j (q^{j+1}; q)_{n-1}}{(q ;  q)_{n-1}} + \f{(q^j ; q)_n}{(q ; q)_n} = \f{(q^{j+1} ; q)_n}{(q; q)_n}, $$
we have 
\begin{align*}
& {\sum}_2 + {\sum}_3 \\
& =
\f{q (1-q)^r (1-b^\ast) a_1^\ast \cs a_{r+1}^\ast }{b^\ast (1-a_1^\ast) \cs (1-a_{r+1}^\ast)}\\
& \quad \times \sum_{j=1}^{r-1} A_j^\ast B_j^\ast \left\{ 1+ \sum_{n=1}^{\infty} 
\f{(a_1^\ast q^j, \ls, a_{r+1}^\ast q^j; q)_n}{(q ; q)_{n} (\underbrace{q^{j+1}, \ls, q^{j+1}}_{r-1}, b^\ast q^j ; q)_n} 
\left( \f{b^\ast}{a_1^\ast \cs a_{r+1}^\ast} \right)^n \right\} \\
& = \f{q (1-q)^r (1-b^\ast) a_1^\ast \cs a_{r+1}^\ast }{b^\ast (1-a_1^\ast) \cs (1-a_{r+1}^\ast)}
\sum_{j=1}^{r-1} A_j^\ast B_j^\ast {}_{r+1} \phi_{r} \left[ \begin{array}{c} a_1^\ast q^j, \ls, a_{r+1}^\ast q^j \\ \underbrace{q^{j+1}, \ls, q^{j+1}}_{r-1}, b^\ast q^j \end{array} ; q, \f{b^\ast}{a_1^\ast \cs a_{r+1}^\ast} \right]. 
\end{align*} 
Therefore we obtain
\begin{align*}
\Psi^t_0 & = \f{\{ 1+ (1-q) u_1 \}^2}{1-q u_1} \f{q (1-q)^r (1-b^\ast) a_1^\ast \cs a_{r+1}^\ast }{b^\ast (1-a_1^\ast) \cs (1-a_{r+1}^\ast)} \\
& \quad \times \left\{ \sum_{j=0}^{r-1} A_j^\ast B_j^\ast {}_{r+1} \phi_{r} \left[ \begin{array}{c} a_1^\ast q^j, \ls, a_{r+1}^\ast q^j \\ \underbrace{q^{j+1}, \ls, q^{j+1}}_{r-1}, b^\ast q^j \end{array} ; q, \f{b^\ast}{a_1^\ast \cs a_{r+1}^\ast} \right] - A_0^\ast \right\}. 
\end{align*}
We find 
$$\f{a_1^{\ast}}{1-a_1^{\ast}} \cs \f{a_{r+1}^{\ast}}{1-a_{r+1}^{\ast}} = \f{1}{(1-q)^{r+1} (x_{r+2} - x_1 x_{r+1})}, $$
$$\f{q(1-q)^r}{1-q u_1}\f{1-b^\ast}{b^\ast} = \f{(1-q)^{r+1}}{1+(1-q)u_1}. $$
Also by \eqref{x1} and \eqref{xj}, we have 
$$x_{r+2} - x_1 x_{r+1} = \f{u_{r+2} - u_1 u_{r+1}}{1+(1-q)u_1}. $$
Hence we conclude the theorem. 
\end{proof}

\section{$t$-$q$MPLs}\label{tqMPL}
For any multi-index $\k = (k_1, k_2, \ls, k_l) \in \N^l$ and a parameter $q$, $q$-analogue of multiple polylogarithms (of non-star and star types, collectively abbreviated as $q$MPLs) are defined by 
\begin{align*}
Li_{\k; q}(z) & =\sum_{m_1 > m_2 > \cs > m_l \geq 1} \f{z^{m_1}}{{[m_1]}^{k_1}{[m_2]}^{k_2} \cs {[m_l]}^{k_l}} \ (\in \Q[[q, z]]),  \\
Li^{\star}_{\k; q}(z) & =\sum_{m_1 \geq m_2 \geq \cs \geq m_l \geq 1} \f{z^{m_1}}{{[m_1]}^{k_1}{[m_2]}^{k_2} \cs {[m_l]}^{k_l}} \ (\in \Q[[q, z]]). 
\end{align*}
These series converge if $|z| < 1$ for any $\k$. 
We also define $t$-$q$MPLs by 
\begin{align*}
Li^{t}_{\k; q}(z) = {\sum_{\p}}' Li_{\p; q}(z) t^{l - \rm{dep}(\p)} \ (\in \Q[[q, z]][t]),  
\end{align*}
where $\sum_{\p}'$ stands for the sum where 
$\p$ runs over all indices of the form  
$\p=(k_1\ \square\ \cs\ \square\ k_l)$ in which each $\square$ is filled by two candidates:
$``,"$ or $``+"$. 
\subsection{Difference formula for $t$-$q$MPLs $Li^{t}_{\k; q}(z)$}
Denote by $\Dq$ the $q$-difference operator 
$$(\Dq f)(z) =\f{f(z)-f(qz)}{(1-q)z}. $$
Then by definition $q$MPLs satisfy 
$$\Dq Li_{(k_1, k_2, \ldots, k_l); q}(z) =
\left\{ \begin{array}{ll}
\d\f{1}{z}Li_{(k_1-1, k_2, \ldots, k_l); q}(z), & k_1 \geq 2, \\
\d\f{1}{1-z} Li_{(k_2, \ldots, k_l); q}(z), & k_1=1, l \geq 2, \\
\d\f{1}{1-z}, & k_1 = l=1. 
\end{array}\right. $$

\begin{lem}\label{lem2}
For positive inegers $k_1, k_2, \ldots, k_l$, 
$t$-$q$MPLs satisfy 
$$\Dq Li^t_{(k_1, k_2, \ldots, k_l); q}(z) =
\left\{ \begin{array}{ll}
\d\f{1}{z}Li^t_{(k_1-1, k_2, \ldots, k_l); q}(z), & k_1 \geq 2, \\
\d\left( \f{t}{z} + \f{1}{1-z}\right) Li^t_{(k_2, \ldots, k_l); q}(z), & k_1=1, l \geq 2, \\
\d\f{1}{1-z}, & k_1 = l=1. 
\end{array}\right. $$
\end{lem}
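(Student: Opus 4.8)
The plan is to apply $\Dq$ termwise to the defining expansion
\[
Li^t_{(k_1, k_2, \ls, k_l); q}(z) = {\sum_{\p}}' Li_{\p; q}(z)\, t^{l - \mathrm{dep}(\p)},
\]
use the linearity of $\Dq$, invoke the $q$-difference formula for the ordinary $q$MPLs $Li_{\p; q}$ recalled just above, and then reassemble the pieces. All the content is in the combinatorial bookkeeping: how the set of admissible fillings $\p$ and the exponent $l - \mathrm{dep}(\p)$ transform under the three reduction operations that appear on the right-hand side.

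First suppose $k_1 \geq 2$. Then the first entry of every $\p$ occurring in the sum is at least $k_1 \geq 2$, so $\Dq Li_{\p; q}(z) = \f1z Li_{\p^{-}; q}(z)$, where $\p^{-}$ is $\p$ with its first entry decreased by $1$. Since $k_1 - 1 \geq 1$, the map $\p \mapsto \p^{-}$ is a bijection from the fillings attached to $(k_1, k_2, \ls, k_l)$ onto those attached to $(k_1 - 1, k_2, \ls, k_l)$, and it preserves $l$ and the depth; hence termwise differentiation reproduces $\f1z Li^t_{(k_1-1, k_2, \ls, k_l); q}(z)$, as claimed. The case $k_1 = l = 1$ is immediate, since then the only filling is $\p = (1)$ and $Li^t_{(1); q} = Li_{(1); q}$.

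The substantive case is $k_1 = 1$, $l \geq 2$, where I would split the sum over $\p$ according to the symbol chosen for the first $\square$ (the one between $k_1$ and $k_2$). If it is a comma, then $\p = (1, \p')$ with $\p'$ running over the fillings attached to $(k_2, \ls, k_l)$ and $l - \mathrm{dep}(\p) = (l-1) - \mathrm{dep}(\p')$; applying $\Dq$ in the case ``$k_1 = 1$, depth $\geq 2$'' turns $Li_{(1, \p'); q}$ into $\f{1}{1-z} Li_{\p'; q}$, and summing gives the contribution $\f{1}{1-z} Li^t_{(k_2, \ls, k_l); q}(z)$. If instead it is a plus, then $\p$ has the form $(1 + r_1, r_2, \ls, r_m)$ with $(r_1, \ls, r_m)$ running over the fillings attached to $(k_2, \ls, k_l)$; the merge drops the depth by one relative to the comma case, so $l - \mathrm{dep}(\p)$ now equals $((l-1) - m) + 1$, which contributes an extra factor $t$. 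Since $1 + r_1 \geq 2$, applying $\Dq$ gives $\f1z Li_{(r_1, \ls, r_m); q}$, and summing gives the contribution $\f{t}{z} Li^t_{(k_2, \ls, k_l); q}(z)$. Adding the two contributions yields $\left(\f{t}{z} + \f{1}{1-z}\right) Li^t_{(k_2, \ls, k_l); q}(z)$.

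The only point that requires care is precisely this last depth count in the $k_1 = 1$ case: one must see that ``$+$''-merging the leading $1$ with the block following it lowers $\mathrm{dep}(\p)$ by exactly one compared with the ``$,$'' choice, and that this is what produces the coefficient $t/z$ rather than $1/z$. Once the bijections between the three pairs of filling-sets are written out, the remaining manipulation is a routine reindexing.
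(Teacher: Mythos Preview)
Your proof is correct and follows essentially the same approach as the paper: decompose according to whether the first $\square$ is ``$,$'' or ``$+$'', apply the known $q$-difference formula for $Li_{\p;q}$, and reassemble. The only cosmetic difference is that for $k_1\geq 2$ you phrase the argument as a single bijection $\p\mapsto\p^{-}$ rather than writing out the two sums separately, but this is the same computation.
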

\begin{proof}
If $l \geq 2$, we have the decomposition 
\begin{align*}
Li^{t}_{(k_1, k_2, \ls, k_l); q}(z) 
= \sum_{{\p=(k_1, k_2\ \square\ \cs\ \square\ k_l)} \atop {\square = ``," \text{or} ``+"}} Li_{\p; q}(z) t^{l - \rm{dep}(\p)}
+ \sum_{{\p=(k_1+ k_2\ \square\ \cs\ \square\ k_l)} \atop {\square = ``," \text{or} ``+"}} Li_{\p; q}(z) t^{l - \rm{dep}(\p)}. 
\end{align*}
Hence if $k_1 \geq 2$, we have 
\begin{align*}
 & \Dq Li^{t}_{(k_1, k_2, \ls, k_l); q}(z) \\
 & = \f{1}{z} \sum_{{\p=(k_1-1, k_2\ \square\ \cs\ \square\ k_l)} \atop {\square = ``," \text{or} ``+"}} Li_{\p; q}(z) t^{l - \rm{dep}(\p)}
+ \f{1}{z} \sum_{{\p=(k_1-1+ k_2\ \square\ \cs\ \square\ k_l)} \atop {\square = ``," \text{or} ``+"}} Li_{\p; q}(z) t^{l - \rm{dep}(\p)} \\
 &= \f{1}{z}Li^{t}_{(k_1-1, k_2, \ls, k_l); q}(z),  
 \end{align*}
and otherwise 
\begin{align*}
 & \Dq Li^{t}_{(1, k_2, \ls, k_l); q}(z) \\
 & = \f{1}{1-z} \sum_{{\p=(k_2\ \square\ \cs\ \square\ k_l)} \atop {\square = ``," \text{or} ``+"}} Li_{\p; q}(z) t^{l - \rm{dep}(\p)-1}
+ \f{1}{z} \sum_{{\p=(k_2\ \square\ \cs\ \square\ k_l)} \atop {\square = ``," \text{or} ``+"}} Li_{\p; q}(z) t^{l - \rm{dep}(\p)} \\
 &= \left( \f{t}{z} + \f{1}{1-z}\right) Li^{t}_{(k_2, \ls, k_l); q}(z). 
 \end{align*}
If $l=1$, since $Li^{t}_{(k_1); q}(z) = Li_{(k_1); q}(z)$, we have
$$\Dq Li^t_{(k_1); q}(z) =
\left\{ \begin{array}{ll}
\d\f{1}{z}Li_{(k_1-1); q}(z) = \f{1}{z}Li^t_{(k_1-1); q}(z), & k_1 \geq 2, \\
\d\f{1}{1-z}, & k_1 =1. 
\end{array}\right. $$
\end{proof}
\subsection{Difference formula for sums of $t$-$q$MPLs $G^t_j(z)$}
Let $r$ be a positive integer. 
We define sets of indices $I_j\ (-1 \leq j \leq r-1), I$ for any integers $k, l, h_1, h_2, \ldots, h_r \geq 0$ by  
\begin{align*}
I_j&(k, l, h_1, h_2, \ldots, h_r) \\
 & = \{ \k = (k_1, k_2, \ls, k_l) \in \N^l | \text{wt}(\k)=k, i\text{-ht}(\k)=h_i (i=1, \ldots, r), k_1 \geq j+2\} \\
\text{and} \\
I&(k, l, h_1, h_2, \ldots, h_r) =I_{-1}(k, l, h_1, h_2, \ldots, h_r) .
\end{align*}
($I_0$ is nothing but that we defined in $\S 2$.)
We notice that: 
\begin{itemize}
\item[(i)] $I_j(k, l, h_1, \ls, h_r) \subseteq I_{j+1}(k, l, h_1, \ls, h_r)$. 
\item[(ii)] 
If $I(k, l, h_1, h_2, \ldots, h_r) \neq \phi$, then 
$$\left\{ \begin{array}{l}
k \geq l+ \d\sum_{j=1}^{r}h_j, \\
l \geq h_1 \geq \cdots \geq h_r \geq 0. 
\end{array}\right. $$
\item[(iii)] 
If $I_j(k, l, h_1, h_2, \ldots, h_r) \neq \phi$, then 
$$\left\{ \begin{array}{l}
k \geq l+ \d\sum_{j=1}^{r}h_j, \\
l \geq h_1 \geq \cdots \geq h_r \geq 0,\\
h_{j+1} \geq 1,  
\end{array}\right. $$
for $j= 0, 1, \ldots, r-1$.  
\end{itemize}

For any integers $k, l, h_1, \ldots, h_r \geq 0$ 
and 
$-1 \leq j \leq r-1$, 
we set
\begin{align*}
G^t_j(z) = G^t_j(k, l, h_1, \ldots, h_r; z) & = \sum_{\k \in I_j(k, l, h_1, \ldots, h_r)} Li^t_{\k; q}(z), \\
G^t(z) = G^t(k, l, h_1, \ldots, h_r; z) & = G^t_{-1}(k, l, h_1, \ldots, h_r; z).  
\end{align*} 
In the definition,  the sum is regarded as $0$ 
whenever the index set is empty except for $G^t(0, 0, \ldots, 0; z) : =1$. 

Applying the $q$-difference oparator $\Dq$ to the sum $G^t_j(z)$, 
we obtain the following lemma. 
\begin{lem}\label{lem3}
\begin{itemize}
\item[(i)]  For $k \geq l+\sum_{j=1}^{r} h_j$, $l \geq h_1 \geq \cs \geq h_r \geq 1$, we have  
\begin{align*}
& \Dq G^t_{r-1}(k, l, h_1, \ls, h_r; z) \\
& = \f{1}{z} \left\{ G^t_{r-1}(k-1, l, h_1, \ls, h_r; z) \right. \\
& \quad + \left. G^t_{r-2}(k-1, l, h_1, \ls, h_{r-1}, h_r-1; z) -G^t_{r-1}(k-1, l, h_1, \ls, h_{r-1}, h_r-1; z)
\right\}. 
\end{align*}
\item[(ii)] For $ 0 \leq j \leq r-2$, $k \geq l+\sum_{j=1}^{r} h_j$, $l \geq h_1 \geq \cs \geq h_r \geq 0$, $h_{j+1} \geq 1$, we have 
\begin{align*}
& \Dq \left\{ G^t_j(k, l, h_1, \ls, h_r; z) - G^t_{j+1}(k, l, h_1, \ls, h_r; z) \right\} \\
& = \f{1}{z}\left\{ G^t_{j-1}(k-1, l, h_1, \ls, h_j, h_{j+1}-1, h_{j+2}, \ls, h_r; z) \right. \\
& \quad -\left. G^t_{j}(k-1, l, h_1, \ls, h_j, h_{j+1}-1, h_{j+2}, \ls, h_r; z)\right\}. 
\end{align*}
\item[(iii)]For $k \geq l+\sum_{j=1}^{r} h_j$, $l \geq h_1 \geq \cs \geq h_r \geq 0$, $l \geq 2$, we have 
\begin{align*}
& \Dq \left\{ G^t(k, l, h_1, \ls, h_r; z) - G^t_{0}(k, l, h_1, \ls, h_r; z) \right\} \\
& = \left( \f{t}{z} + \f{1}{1-z} \right) G^t(k-1, l-1, h_1, \ls, h_r; z). \end{align*}
\end{itemize}
\end{lem}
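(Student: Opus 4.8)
The plan is to prove all three identities the same way: apply $\Dq$ term by term to the finite sums defining $G^t_{r-1}$, $G^t_j-G^t_{j+1}$ and $G^t-G^t_0$, evaluate $\Dq$ on each summand by Lemma \ref{lem2}, and then recognize the regrouped sum as the asserted combination of $G^t$'s via an explicit bijection of index sets. The one combinatorial fact that does all the work is this: in an index $\k=(k_1,k_2,\ldots,k_l)$ the first component $k_1$ contributes $1$ to the $i$-height precisely when $k_1\ge i+1$, so replacing $k_1$ by $k_1-1$ leaves every $i$-height unchanged except the $(k_1-1)$-th, which drops by $1$, and replacing $k_1$ by $k_1+1$ does the reverse. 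Since the generic branch of Lemma \ref{lem2} sends $Li^t_{(k_1,k_2,\ldots,k_l);q}(z)$ to $\frac{1}{z}Li^t_{(k_1-1,k_2,\ldots,k_l);q}(z)$, this is exactly the rule needed to match weights and heights on the two sides (the weight always drops by $1$; the depth is unchanged in (i) and (ii) and drops by $1$ in (iii)).

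For (i): every $\k\in I_{r-1}(k,l,h_1,\ldots,h_r)$ has $k_1\ge r+1\ge 2$, so only the first branch of Lemma \ref{lem2} occurs. I would split $I_{r-1}(k,l,h_1,\ldots,h_r)$ according to whether $k_1\ge r+2$ or $k_1=r+1$. On the first piece $\k\mapsto(k_1-1,k_2,\ldots,k_l)$ is a bijection onto $I_{r-1}(k-1,l,h_1,\ldots,h_r)$, since $k_1-1\ge r+1$ preserves all $r$ heights; on the second piece $k_1-1=r$, so the $r$-height drops by one and the image is exactly $I_{r-2}(k-1,l,h_1,\ldots,h_{r-1},h_r-1)\setminus I_{r-1}(k-1,l,h_1,\ldots,h_{r-1},h_r-1)$. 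Adding the two contributions and pulling out $\frac{1}{z}$ gives the formula. Part (ii) follows the same pattern but is shorter: $I_j(k,l,h_1,\ldots,h_r)\setminus I_{j+1}(k,l,h_1,\ldots,h_r)$ is the set of indices with $k_1=j+2\ (\ge 2)$, and $\k\mapsto(j+1,k_2,\ldots,k_l)$ identifies it bijectively with $I_{j-1}\setminus I_j$ at weight $k-1$ and height vector $(h_1,\ldots,h_j,h_{j+1}-1,h_{j+2},\ldots,h_r)$, only the $(j+1)$-height changing. For (iii), the difference $G^t(k,l,h_1,\ldots,h_r;z)-G^t_0(k,l,h_1,\ldots,h_r;z)$ collects exactly the indices with $k_1=1$; here Lemma \ref{lem2} gives $\Dq Li^t_{(1,k_2,\ldots,k_l);q}(z)=\bigl(\frac{t}{z}+\frac{1}{1-z}\bigr)Li^t_{(k_2,\ldots,k_l);q}(z)$, legitimate because $l\ge 2$, and deleting the leading $1$ is a bijection onto $I(k-1,l-1,h_1,\ldots,h_r)$ since a component equal to $1$ never affects any height; pulling the common factor $\frac{t}{z}+\frac{1}{1-z}$ out of the sum yields the claim.

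The step I expect to require the most care is the boundary bookkeeping against the stated conventions: that empty index sets contribute $0$, that $G^t(0,\ldots,0;z)=1$, and that these are consistent with the bijections when some $h_j$ equals $0$ or $1$, when $r=1$ (so a first component can become $1$ after reduction), or when the depth reduction in (iii) would reach an empty index. Under the hypotheses listed --- $l\ge h_1\ge\cdots\ge h_r\ge 1$ in (i), $h_{j+1}\ge 1$ in (ii), $l\ge 2$ in (iii) --- one should check that every index set occurring above is the genuine one, so that the identities hold as written and not merely generically. Everything else is the routine calculation of composing Lemma \ref{lem2} with the height bookkeeping described in the first paragraph.
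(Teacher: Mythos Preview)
Your proposal is correct and follows essentially the same route as the paper: apply Lemma~\ref{lem2} termwise, split the index set $I_{r-1}$ (resp.\ $I_j\setminus I_{j+1}$, resp.\ $I\setminus I_0$) according to the value of $k_1$, and identify the image under $k_1\mapsto k_1-1$ (resp.\ deletion of the leading $1$) with the stated difference of index sets at the shifted weight and height. The paper's proof is terser and does not spell out the bijection language or the boundary checks you flag, but the argument is the same.
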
 
\begin{proof}
\begin{itemize}
\item[(i)] If $\k=(k_1, \ls, k_l) \in I_{r-1}(k, l, h_1, \ls, h_r)$, $k_1 \geq r+1 \geq 2$. Then we have  
\begin{align*}
& \Dq G^t_{r-1}(k, l, h_1, \ls, h_r; z) \\
& = \f{1}{z} \sum_{{\k=(k_1, \ls, k_l)}\atop{ \in I_{r-1}(k, l, h_1, \ls, h_r)}}Li^t_{(k_1-1, k_2, \ls, k_l); q}(z)\\
& =\f{1}{z} \sum_{{{\k=(k_1, \ls, k_l)}\atop{ \in I_{r-1}(k, l, h_1, \ls, h_r), }}\atop{k_1 \geq r+2}}Li^t_{(k_1-1, k_2, \ls, k_l); q}(z)
+ \f{1}{z} \sum_{{{\k=(k_1, \ls, k_l)}\atop{ \in I_{r-1}(k, l, h_1, \ls, h_r), }}\atop{k_1 = r+1}}Li^t_{(k_1-1, k_2, \ls, k_l); q}(z)\\
& = \f{1}{z} G^t_{r-1}(k-1, l, h_1, \ls, h_r; z) +\f{1}{z}\sum_{{{\k=(k_1, \ls, k_l)}\atop{ \in I_{r-2}(k-1, l, h_1, \ls, h_{r-1}, h_r-1), }}\atop{k_1 = r}}Li^t_{\k; q}(z)\\
& = \f{1}{z} G^t_{r-1}(k-1, l, h_1, \ls, h_r; z) \\
& \quad +\f{1}{z}\sum_{{{\k=(k_1, \ls, k_l)}\atop{ \in I_{r-2}(k-1, l, h_1, \ls, h_{r-1}, h_r-1), }}\atop{k_1 \geq r}}Li^t_{\k; q}(z)
-\f{1}{z}\sum_{{{\k=(k_1, \ls, k_l)}\atop{ \in I_{r-2}(k-1, l, h_1, \ls, h_{r-1}, h_r-1), }}\atop{k_1 \geq r + 1}}Li^t_{\k; q}(z)\\
& = \f{1}{z} G^t_{r-1}(k-1, l, h_1, \ls, h_r; z) \\
& \quad + \f{1}{z} G^t_{r-2}(k-1, l, h_1, \ls, h_{r-1}, h_r-1; z) - \f{1}{z} G^t_{r-1}(k-1, l, h_1, \ls, h_{r-1}, h_r-1; z). 
\end{align*}
\item[(ii)] For $ 0 \leq j \leq r-2$, 
if $\k=(k_1, \ls, k_l) \in I_j(k, l, h_1, \ls, h_r)
$, $k_1 \geq j+2 \geq 2$. Then we have
\begin{align*}
& \Dq \left\{ G^t_j(k, l, h_1, \ls, h_r; z) - G^t_{j+1}(k, l, h_1, \ls, h_r; z) \right\} \\
& = \Dq \sum_{{{\k=(k_1, \ls, k_l)}\atop{ \in I_{j}(k, l, h_1, \ls, h_r), }}\atop{k_1 = j+2}}Li^t_{\k; q}(z)\\
& = \f{1}{z} \sum_{{{\k=(k_1, \ls, k_l)}\atop{ \in I_{j-1}(k-1, l, h_1, \ls,h_j,  h_{j+1}-1, h_{j+2}, \ls, h_r), }}\atop{k_1 = j+1}}Li^t_{\k; q}(z)\\
& = \f{1}{z}\left\{ G^t_{j-1}(k-1, l, h_1, \ls, h_j, h_{j+1}-1, h_{j+2}, \ls, h_r; z) \right. \\
& \quad -\left. G^t_{j}(k-1, l, h_1, \ls, h_j, h_{j+1}-1, h_{j+2}, \ls, h_r; z)\right\}. 
\end{align*}
\item[(iii)] If $l \geq 2$, we have
\begin{align*}
& \Dq \left\{ G^t(k, l, h_1, \ls, h_r; z) - G^t_{0}(k, l, h_1, \ls, h_r; z) \right\} \\
&= \Dq \sum_{{{\k=(k_1, \ls, k_l)}\atop{ \in I(k, l, h_1, \ls, h_r), }}\atop{k_1 = 1}}Li^t_{\k; q}(z) 
= \left( \f{t}{z} +\f{1}{1-z} \right)\sum_{{\k=(k_2, \ls, k_l)}\atop{ \in I(k-1, l-1, h_1, \ls, h_r), }}Li^t_{\k; q}(z) \\
& = \left( \f{t}{z} + \f{1}{1-z} \right) G^t(k-1, l-1, h_1, \ls, h_r; z). \end{align*}
\end{itemize}
\end{proof}
\subsection{Difference formula for generating functions $\Phi^t_j(z)$}
Let $x_1, \ls, x_{r+2}$ be variables. 
We define generating functions for $G^t_j(z)$ ($-1 \leq j \leq r-1$) 
as follows: 
\begin{align*}
\Phi^t_j& =\Phi^t_j(z) = \Phi^t_j(x_1, \ls, x_{r+2}; z) \\
& = \sum_{k, l, h_1, \ls, h_r \geq 0} G^t_j(k, l, h_1, \ls, h_r; z) x_1^{k-l-\sum_{i=1}^{r}h_i} x_2^{n-h_1} x_3^{h_1-h_2} \cs x_{r+1}^{h_{r-1}-h_r} x_{r+2}^{h_r},\\
\Phi^t& = \Phi^t_{-1}. 
\end{align*}

Applying the $q$-difference oparator to the generating functions $\Phi^t_j, \Phi^t$, 
we obtain the following proposition. 
\begin{prop}\label{prop4}
Let $r$ be a positive integer.  We have
$$\left\{ \begin{array}{l}
\Dq \Phi^t_{r-1} = \d \f{x_1}{z} \Phi^t_{r-1} + \f{x_{r+2}}{z x_{r+1}}(\Phi^t_{r-2} -\Phi^t_{r-1}),  \\
\Dq (\Phi^t_{j} - \Phi^t_{j+1}) =\d  \f{x_{j+3}}{z x_{j+2}} (\Phi^t_{j-1} -\Phi^t_{j}), \qquad j=1, 2, \ls, r-2, \\
\Dq (\Phi^t_{0} - \Phi^t_{1}) = \d \f{x_{3}}{z x_{2}} (\Phi^t -\Phi^t_{0}-1),  \\
\Dq (\Phi^t - \Phi^t_{0}) = \d \f{x_2}{1-z} + \left( \f{t}{z} + \f{1}{1-z}\right) x_2 (\Phi^t-1).  
\end{array}
\right. $$
\end{prop}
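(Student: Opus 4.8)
The plan is to translate the four difference formulas of Lemma~\ref{lem3} into the language of the generating functions $\Phi^t_j(z)$ by multiplying each identity by the appropriate monomial $x_1^{k-l-\sum h_i} x_2^{l-h_1}\cdots x_{r+2}^{h_r}$ and summing over all $k,l,h_1,\ldots,h_r \ge 0$. Since $\Dq$ acts only on the variable $z$, it commutes with the sum, so the left-hand sides assemble directly into $\Dq\Phi^t_{r-1}$, $\Dq(\Phi^t_j-\Phi^t_{j+1})$, $\Dq(\Phi^t_0-\Phi^t_1)$, and $\Dq(\Phi^t-\Phi^t_0)$ respectively. The bookkeeping to watch is that the range restrictions in Lemma~\ref{lem3} (namely $k\ge l+\sum h_j$, $l\ge h_1\ge\cdots\ge h_r$, and $h_{j+1}\ge 1$) are exactly the conditions (ii)/(iii) under which the relevant index sets $I_j$ are nonempty, so summing over the ``forbidden'' tuples contributes nothing and the sums may be taken unrestricted.

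The core of the proof is then to identify what each shifted term on the right-hand side becomes after this summation. First I would handle the generic pieces: a term like $\frac1z G^t_{j-1}(k-1,l,h_1,\ldots,h_{j+1}-1,\ldots,h_r;z)$, when weighted by $x_1^{k-l-\sum h_i}\cdots x_{r+2}^{h_r}$ and summed, produces $\frac1z\Phi^t_{j-1}$ multiplied by the ratio of monomials induced by the index shifts $k\mapsto k+1$ and $h_{j+1}\mapsto h_{j+1}+1$. The shift $k\mapsto k+1$ (with $l,h_i$ fixed) raises the exponent of $x_1$ by one, giving a factor $x_1$; but we also divide by $z$, and one checks the net effect against the other term. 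The shift $h_{j+1}\mapsto h_{j+1}+1$ changes the exponents of $x_1$ (down by one, since $k-l-\sum h_i$ drops), of $x_{j+2}$ (up by one, from $h_{j+1}-h_{j+2}$), and of $x_{j+3}$ (down by one, from $h_{j+2}-h_{j+3}$); the combined monomial factor is $x_{j+3}/(x_1 x_{j+2})$... wait, one must combine this with the $k\mapsto k+1$ shift carefully, and the $x_1$'s cancel, leaving precisely the coefficient $\frac{x_{j+3}}{z\,x_{j+2}}$ that appears in the statement. The analogous computation for part~(i) produces the $\frac{x_1}{z}\Phi^t_{r-1}$ term from the unshifted $G^t_{r-1}(k-1,\ldots)$ and the $\frac{x_{r+2}}{z\,x_{r+1}}(\Phi^t_{r-2}-\Phi^t_{r-1})$ from the two terms with $h_r\mapsto h_r-1$.

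For the last two identities I would separately track the exceptional low-index terms. In part~(iii) of Lemma~\ref{lem3} the constant $\frac{1}{1-z}$ arising from the case $k_1=l=1$ (i.e.\ $G^t(0,0,\ldots,0;z)=1$) must be split off, which is the source of the isolated $\frac{x_2}{1-z}$ summand and of the ``$-1$'' inside $x_2(\Phi^t-1)$; similarly, in the equation for $\Dq(\Phi^t_0-\Phi^t_1)$ the convention $G^t(0,\ldots,0;z)=1$ forces the ``$-1$'' inside $(\Phi^t-\Phi^t_0-1)$, since that single tuple is not captured by $G^t_0$ and contributes nothing to $\Phi^t_0$ but does contribute to $\Phi^t$. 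The main obstacle is precisely this monomial-exponent bookkeeping: getting the sign conventions and the index ranges right so that the edge cases $l=0,1$ and $h_i=0$ land on the correct side, and verifying that $j$ ranges correctly (the three ``families'' of equations correspond to $j=r-1$, $1\le j\le r-2$, and the boundary cases $j=0$ and $j=-1$). Once the monomial factors are pinned down, each of the four equations of Proposition~\ref{prop4} follows by a direct term-by-term comparison, so I would organize the write-up as: (a) reduce to unrestricted sums using (ii)/(iii); (b) compute the monomial factor induced by each elementary index shift once and for all; (c) read off the four formulas, treating the $l\le 1$ exceptional terms explicitly.
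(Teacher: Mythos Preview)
Your plan is correct and is essentially the same as the paper's proof: both apply $\Dq$ termwise, invoke Lemma~\ref{lem3} on each summand, re-index the shifted sums, and separately handle the boundary cases (the paper decomposes $\Phi^t-\Phi^t_0$ into the pieces $l=0$, $l=1$, $l\ge 2$ and uses the observation that $I\setminus I_0=\varnothing$ when $l=h_1\ge 1$, which is exactly the edge-case bookkeeping you flag). The only differences are cosmetic---the paper writes out the index shifts explicitly rather than packaging them as ``monomial factors,'' and your exponent tracking in the middle paragraph has the roles of $x_{j+2}$ and $x_{j+3}$ momentarily swapped, but you arrive at the right coefficient $\tfrac{x_{j+3}}{z\,x_{j+2}}$.
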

\begin{proof}
By the definition of $\Phi^t_{r-1}$, we have
$$\Dq\Phi^t_{r-1}= \sum_{{k \geq l+\sum_{j=1}^{r}h_j} \atop {l \geq h_1 \geq \cs \geq h_r \geq 1}} \Dq G^t_{r-1}(k, l, h_1, \ls, h_r; z) x_1^{k-l-\sum_{j=1}^{r}h_j} x_2^{l-h_1} x_3^{h_1-h_2} \cs x_{r+1}^{h_{r-1}-h_r} x_{r+2}^{h_r}.$$
Applying the equation (i) of Lemma \ref{lem3} to the right-hand side,  
we have
\begin{align*}
& \Dq\Phi^t_{r-1}\\
 & = \f{1}{z} \sum_{{k \geq l+\sum_{j=1}^{r}h_j} \atop {l \geq h_1 \geq \cs \geq h_r \geq 1} } 
 G^t_{r-1} (k-1, l, h_1, \ls, h_r ; z) x_1^{k-l-\sum_{j=1}^{r}h_j} x_2^{l-h_1} x_3^{h_1-h_2} \cs x_{r+1}^{h_{r-1} - h_{r}} x_{r+2}^{h_r}\\
 &+ \f{1}{z} \sum_{{k \geq l+\sum_{j=1}^{r}h_j} \atop {l \geq h_1 \geq \cs \geq h_r \geq 1} }
 \left\{ G^t_{r-2}(k-1, l, h_1, \ls, h_{r-1}, h_r-1 ; z) \right. \\
 & \qquad \left. -G^t_{r-1}(k-1, l, h_1, \ls, h_{r-1}, h_r-1 ; z) \right\}
 x_1^{k-l-\sum_{j=1}^{r}h_j} x_2^{l-h_1} x_3^{h_1-h_2} \cs x_{r+1}^{h_{r-1} - h_{r}} x_{r+2}^{h_r}\\
 & = \f{x_1}{z} \Phi^t_{r-1} 
 + \f{1}{z} \left\{ \sum_{{k-1 \geq l+\sum_{j=1}^{r}h_j-1} \atop {l \geq h_1 \geq \cs \geq h_{r-1} \geq h_r -1 \geq 0} }
 - \sum_{{k-1 \geq l+\sum_{j=1}^{r}h_j-1} \atop {l \geq h_1 \geq \cs \geq h_{r-1}=h_r-1 \geq 0} } \right\} \\
& \times \left\{ G^t_{r-2}(k-1, l, h_1, \ls, h_{r-1}, h_r-1 ; z) \right. \\
&  \qquad \left.- G^t_{r-1}(k-1, l, h_1, \ls, h_{r-1}, h_r-1 ; z) \right\}
x_1^{k-l-\sum_{j=1}^{r}h_j} x_2^{l-h_1} x_3^{h_1-h_2} \cs x_{r+1}^{h_{r-1} - h_{r}} x_{r+2}^{h_r}\\
& = \f{x_1}{z} \Phi^t_{r-1} + \f{x_{r+2}}{z x_{r+1}}(\Phi^t_{r-2} -\Phi^t_{r-1}). 
\end{align*}
Note that the second sum is $0$ since no indices have the property $h_{r-1}=h_r-1$. 

We proceed to show the second equation. 
Since $\Phi^t_j=\Phi^t_{j+1}$ when $h_{j+1} = h_{j+2}$, we see that
\begin{align*}
& \Dq\left( \Phi^t_{j} - \Phi^t_{j+1} \right)\\
&  = 
\sum_{{{k \geq l+\sum_{i=1}^{r}h_i} \atop {l \geq h_1 \geq \cs \geq h_{j+1} > h_{j+2} \geq \cs \geq h_r \geq 0}} \atop ({h_{j+1} \geq 1) }}
\Dq\left\{ G^t_j(k, l, h_1, \ls, h_r ; z) - G^t_{j+1}(k, l, h_1, \ls, h_r ; z)\right\}\\
& \qquad \times 
x_1^{k-l-\sum_{i=1}^{r}h_i} x_2^{l-h_1} x_3^{h_1-h_2} \cs x_{r+1}^{h_{r-1} - h_{r}} x_{r+2}^{h_r}. 
\end{align*}
By applying Lemma \ref{lem3} (ii), the right-hand side turns into 
\begin{align*}
& \f{1}{z} \sum_{{{k \geq l+\sum_{i=1}^{r}h_i} \atop {l \geq h_1 \geq \cs \geq h_{j+1} > h_{j+2} \geq \cs \geq h_r \geq 0}} \atop ({h_{j+1} \geq 1) }}
\left\{ G^t_{j-1}(k-1, l, h_1, \ls, h_j, h_{j+1}-1, h_{j+2},\ls, h_r ; z)\right. \\
& \quad \left. - G^t_{j}(k-1, l, h_1, \ls, h_j, h_{j+1}-1, h_{j+2}, \ls, h_r ; z)\right\}\\
& \qquad \times 
x_1^{k-l-\sum_{i=1}^{r}h_i} x_2^{l-h_1} x_3^{h_1-h_2} \cs x_{r+1}^{h_{r-1} - h_{r}} x_{r+2}^{h_r}\\
& = \f{1}{z}\f{x_{j+3}}{x_{j+2}} 
\sum_{{{k \geq l+\sum_{i=1}^{r}h_i} \atop {l \geq h_1 \geq \cs \geq h_j > h_{j+1} \geq h_{j+2} \geq \cs \geq h_r \geq 0}} \atop ({h_{j} \geq 1) }}
\left\{ G^t_{j-1}(k, l, h_1, \ls, h_j, h_{j+1}, h_{j+2},\ls, h_r ; z)\right. \\
& \quad \left. - G^t_{j}(k, l, h_1, \ls, h_j, h_{j+1}, h_{j+2}, \ls, h_r ; z)\right\}\\
& \qquad \times 
x_1^{k-l-\sum_{i=1}^{r}h_i} x_2^{l-h_1} x_3^{h_1-h_2} \cs x_{r+1}^{h_{r-1} - h_{r}} x_{r+2}^{h_r}\\
& = \f{x_{j+3}}{z x_{j+2}} (\Phi^t_{j-1} -\Phi^t_{j}). 
\end{align*}

For the third one, note that 
$\Phi^t_0 = \Phi^t_1$ when $h_1=h_2$. 
Then,  by using Lemma \ref{lem3} (ii), we have
\begin{align*}
& \Dq(\Phi^t_{0} - \Phi^t_{1}) \\
& = \f{1}{z} \sum_{{{k \geq l+\sum_{j=1}^{r}h_j} \atop {l \geq h_1 > h_2 \geq \cs \geq h_r \geq 0}} \atop ({h_{1} \geq 1 })}
\left\{ G^t(k-1, l, h_1-1, h_2, \ls, h_r ; z)\right. \\
& \quad \left. - G^t_{0}(k-1, l, h_1-1, h_2, \ls, h_r ; z)\right\}
x_1^{k-l-\sum_{j=1}^{r}h_j} x_2^{l-h_1} x_3^{h_1-h_2} \cs x_{r+1}^{h_{r-1} - h_{r}} x_{r+2}^{h_r} \\
& = \f{1}{z} \f{x_3}{x_2}
\left\{ \sum_{{{k \geq l+\sum_{j=1}^{r}h_j} \atop {l \geq h_1 \geq \cs \geq h_r \geq 0}} \atop{l \geq 1 }}
- \sum_{{{k \geq l+\sum_{j=1}^{r}h_j} \atop {l = h_1 \geq \cs \geq h_r \geq 0}} \atop{l \geq 1 }}
\right\}
\left\{ G^t(k, l, h_1, h_2, \ls, h_r ; z)\right. \\
& \quad \left. - G^t_{0}(k, l, h_1, h_2, \ls, h_r ; z)\right\}
x_1^{k-l-\sum_{j=1}^{r}h_j} x_2^{l-h_1} x_3^{h_1-h_2} \cs x_{r+1}^{h_{r-1} - h_{r}} x_{r+2}^{h_r}. 
\end{align*}
The second sum is $0$ 
because 
\begin{align}\label{II0}
I(k, l, h_1, \ls, h_r) - I_0(k, l, h_1, \ls, h_r) = \phi \text{\ \ \ if \ } l=h_1\geq 1.
\end{align} 
Hence we have 
\begin{align*}
& \Dq(\Phi^t_{0} - \Phi^t_{1}) \\
& = \f{1}{z} \f{x_3}{x_2}
\sum_{{{k \geq l+\sum_{j=1}^{r}h_j} \atop {l \geq h_1 \geq \cs \geq h_r \geq 0}} \atop{l \geq 1 }}
\left\{ G^t(k, l, h_1, h_2, \ls, h_r ; z)
- G^t_{0}(k, l, h_1, h_2, \ls, h_r ; z)\right\}\\
& \qquad \times 
x_1^{k-l-\sum_{j=1}^{r}h_j} x_2^{l-h_1} x_3^{h_1-h_2} \cs x_{r+1}^{h_{r-1} - h_{r}} x_{r+2}^{h_r}\\
& = \f{x_{3}}{z x_{2}} (\Phi^t -1-\Phi^t_{0}). 
\end{align*}

We move on to show the last equation. First decompose $\Phi^t - \Phi^t_0$ into three parts (for $l=0,\ l=1$ and $l \geq 2$): 
\begin{align*}
 \Phi^t - \Phi^t_0 
& = \sum_{{k \geq l+\sum_{j=1}^{r}h_j} \atop {l \geq h_1 \geq \cs \geq h_r \geq 0}}
\left\{ G^t(k, l, h_1, \ls, h_r ; z) - G^t_{0}(k, l, h_1, \ls, h_r ; z)\right\}\\
& \qquad \times 
x_1^{k-l-\sum_{j=1}^{r}h_j} x_2^{l-h_1} x_3^{h_1-h_2} \cs x_{r+1}^{h_{r-1} - h_{r}} x_{r+2}^{h_r}\\
& = 1 + G^t(1, 1, 0, \ls, 0; z)x_2 \\
& \quad + \sum_{{{k \geq l+\sum_{j=1}^{r}h_j} \atop {l > h_1 \geq \cs \geq h_r \geq 0}}\atop{l \geq 2}}
\left\{ G^t(k, l, h_1, \ls, h_r ; z) - G^t_{0}(k, l, h_1, \ls, h_r ; z)\right\} \\
& \qquad \times
x_1^{k-l-\sum_{j=1}^{r}h_j} x_2^{l-h_1} x_3^{h_1-h_2} \cs x_{r+1}^{h_{r-1} - h_{r}} x_{r+2}^{h_r}. 
\end{align*}
Note that we used \eqref{II0} in the third term. 
Applying Lemma \ref{lem3} (iii), we obtain
\begin{align*}
\Dq( \Phi^t - \Phi^t_0) 
& =  \f{x_2}{1-z} + \left( \f{t}{z} + \f{1}{1-z} \right)  x_2 
\sum_{{{k \geq l+\sum_{j=1}^{r}h_j} \atop {l \geq h_1 \geq \cs \geq h_r \geq 0}} \atop {l \geq 1}}
G^t(k, l, h_1, \ls, h_r ; z)\\
& \qquad \times x_1^{k-l-\sum_{j=1}^{r}h_j} x_2^{l-h_1} x_3^{h_1-h_2} \cs x_{r+1}^{h_{r-1} - h_{r}} x_{r+2}^{h_r} \\
& = \f{x_2}{1-z} + \left( \f{t}{z} + \f{1}{1-z} \right)x_2 \left( \Phi^t -1 \right). 
\end{align*}
\end{proof}

Now we define another $q$-difference operator $\Tq$ : 
$$(\Tq f)(z) := z (\Dq f)(z) = \f{f(z) -f(qz)}{1-q}. $$
\begin{cor}\label{cor5}
\begin{itemize}
\item[(i)] We have
$$\left\{ \begin{array}{l}
\Tq \Phi^t_{r-1} = \d x_1 \Phi^t_{r-1} + \f{x_{r+2}}{x_{r+1}}(\Phi^t_{r-2} -\Phi^t_{r-1}),  \\
\Tq (\Phi^t_{j} - \Phi^t_{j+1}) =\d  \f{x_{j+3}}{x_{j+2}} (\Phi^t_{j-1} -\Phi^t_{j}), \qquad j=1, 2, \ls, r-2, \\
\Tq (\Phi^t_{0} - \Phi^t_{1}) = \d \f{x_{3}}{x_{2}} (\Phi^t -\Phi^t_{0}-1),  \\
\Tq (\Phi^t - \Phi^t_{0}) = \d \f{z}{1-z}x_2 + \f{z+(1-z)t}{1-z} x_2 (\Phi^t-1).  
\end{array}
\right. $$
\item[(ii)] Let $y_0=\Phi^t_{r-1}$, $y_j = \Phi^t_{r-1-j} - \Phi^t_{r-j}\ (j= 1, 2, \ls, r-1)$, $y_r = \Phi^t -\Phi^t_0$. Then we have
$$y_j = \f{x_{r+2-j}}{x_{r+2}}\Tq^j y_0 -\f{x_1 x_{r+2-j}}{x_{r+2}}\Tq^{j-1}y_0 + \delta_{j, r}$$
for $j=1, 2, \ls, r$, where $\delta$ stands for the Kronecker's delta. 
\item[(iii)] The function $y_0 = \Phi^t_{r-1}(z)$ satisfies the following q-difference equation 
\begin{align*}
& \left\{ 
\Tq^{r+1} -(x_1+tx_2)\Tq^r - t \sum_{j=0}^{r-1} (x_{r+2-j} - x_1 x_{r+1-j})\Tq^j \right. \\
&\quad  \left. -z \left( \Tq^{r+1} + ((1 - t) x_2 - x_1)  \Tq^r + (1-t) \sum_{j=0}^{r-1}(x_{r+2-j} - x_1 x_{r+1-j}) \Tq^j
\right)
\right\}y_0\\
&  = x_{r+2}z. 
\end{align*}
\end{itemize}
\end{cor}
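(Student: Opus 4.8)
The plan is to prove (i), (ii), (iii) in that order, each as a short consequence of the previous one (and Proposition~\ref{prop4}); apart from one index shift, everything is routine manipulation of the operator $\Tq$.

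\textbf{Part (i).} I would multiply each identity of Proposition~\ref{prop4} through by $z$ and use $(\Tq f)(z) = z(\Dq f)(z)$. The first three identities then translate term by term. For the fourth, $z\bigl(\f{t}{z} + \f{1}{1-z}\bigr) = t + \f{z}{1-z} = \f{z + (1-z)t}{1-z}$, which produces the stated coefficient. (For $r = 1$ the first and third identities have to be read as the single identity $\Tq\Phi^t_0 = x_1\Phi^t_0 + \f{x_3}{x_2}(\Phi^t - \Phi^t_0 - 1)$: the ``$-1$'' correction that for $r\ge 2$ lives in the third identity here merges into the first, as one sees from the index sets.)

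\textbf{Part (ii).} Put $y_0 = \Phi^t_{r-1}$, $y_j = \Phi^t_{r-1-j} - \Phi^t_{r-j}$ for $1 \le j \le r-1$, and $y_r = \Phi^t - \Phi^t_0$. Rewriting the identities of (i) in these variables gives the cascade $\Tq y_0 = x_1 y_0 + \f{x_{r+2}}{x_{r+1}}y_1$, then $\Tq y_j = \f{x_{r+2-j}}{x_{r+1-j}}y_{j+1}$ for $1 \le j \le r-2$, and finally $\Tq y_{r-1} = \f{x_3}{x_2}(y_r - 1)$. Each $x_i$ is independent of $z$, hence commutes with $\Tq$, and $\Tq 1 = 0$; so solving the first relation for $y_1$ and iterating the middle ones gives $y_j = \f{x_{r+2-j}}{x_{r+1}}\Tq^{j-1}y_1 = \f{x_{r+2-j}}{x_{r+2}}\bigl(\Tq^j y_0 - x_1\Tq^{j-1}y_0\bigr)$ for $1 \le j \le r-1$ by an immediate induction, and feeding $y_{r-1}$ into the last relation produces the same formula for $j = r$ together with the extra constant $1$, i.e.\ the $\delta_{j,r}$ term. (The $r = 1$ case, where only $y_0$ and $y_1 = y_r$ occur, is read off directly from the two identities of (i).)

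\textbf{Part (iii).} From the definitions of the $y_j$ one has the telescoping identity $\Phi^t = \sum_{j=0}^{r} y_j$. Substituting the formulas of (ii) and shifting the summation index, this rearranges to $x_{r+2}(\Phi^t - 1) = x_2\Tq^r y_0 + \sum_{j=0}^{r-1}(x_{r+2-j} - x_1 x_{r+1-j})\Tq^j y_0$; also $\Tq y_r = \f{x_2}{x_{r+2}}\bigl(\Tq^{r+1}y_0 - x_1\Tq^r y_0\bigr)$ since $\Tq 1 = 0$. Now insert these two relations, together with the rewritten $\Phi^t - 1$, into the fourth identity of (i); multiply by $1-z$ to clear the denominators, then by $\f{x_{r+2}}{x_2}$. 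Writing $P = \Tq^{r+1}y_0 - x_1\Tq^r y_0$ and $Q = x_2\Tq^r y_0 + \sum_{j=0}^{r-1}(x_{r+2-j} - x_1 x_{r+1-j})\Tq^j y_0$, the identity collapses to $(1-z)(P - tQ) = z(x_{r+2} + Q)$, i.e.\ $(P - tQ) - z\bigl(P + (1-t)Q\bigr) = x_{r+2}z$. Since $P - tQ$ is exactly the first bracketed operator of the claim applied to $y_0$ and $P + (1-t)Q$ the second, this is the asserted $q$-difference equation. The one step I expect to need care is the index shift, namely turning $y_0 + \sum_{j=1}^{r}\f{x_{r+2-j}}{x_{r+2}}\bigl(\Tq^j y_0 - x_1\Tq^{j-1}y_0\bigr)$ into $\f{1}{x_{r+2}}\bigl(x_2\Tq^r y_0 + \sum_{j=0}^{r-1}(x_{r+2-j} - x_1 x_{r+1-j})\Tq^j y_0\bigr)$, where the $x_2\Tq^r$ term is split off the top of the first sum and the $-x_1 x_{r+1-j}$ terms come from the second sum after relabeling $j \mapsto j+1$; once this bookkeeping is settled the remaining algebra is mechanical.
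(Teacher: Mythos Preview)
Your proposal is correct and follows essentially the same route as the paper's proof: part (i) is obtained by multiplying Proposition~\ref{prop4} through by $z$, part (ii) by rewriting (i) in the $y_j$ variables and iterating, and part (iii) by telescoping $\Phi^t-1=\sum_{j=0}^r y_j-1$ via (ii) and substituting into the last identity of (i). Your introduction of the abbreviations $P$ and $Q$ in (iii) makes the final algebra a bit more transparent than the paper's presentation, but the argument is the same.
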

\begin{proof}
(i) is easily shown by using $\Tq = z \Dq$. 

(ii) The relations in (i) can be written as 
\begin{align}\label{y}
\left\{ \begin{array}{l}
\Tq y_0 = \d x_1 y_0 + \f{x_{r+2}}{x_{r+1}}y_1, \\
\Tq y_j =\d  \f{x_{r+2-j}}{x_{r+1-j}}y_{j+1} -\f{x_{r+2-j}}{x_{r+1-j}} \delta_{r-1, j},  \qquad j=1, 2, \ls, r-1 \\
\Tq y_r= \d \f{z}{1-z}x_2 + \f{z+(1-z)t}{1-z} x_2 (y_0 + y_1 + \cs + y_r -1). 
\end{array}
\right. 
\end{align}
By the second relation of \eqref{y}, we have
\begin{align}\label{yj}
y_{j+1} = \f{x_{r+1-j}}{x_{r+2-j}} \Tq y_j + \delta_{j, r-1} , \quad j=1, 2, \ls, r-1.
\end{align} 
Hence we have 
$$ y_j=\f{x_{r+2-j}}{x_{r+1}} \Tq^{j-1} y_1,\quad j = 2, \ls, r-1. $$
Since $y_1 = \f{x_{r+1}}{x_{r+2}} (\Tq y_0 -x_1 y_0)$,  we have 
$$y_{j} = \f{x_{r+2-j}}{x_{r+2}} \left( \Tq^j y_0 - x_1 \Tq^{j-1} y_0\right)$$
for $j= 2, \ls, r-1$. 
This is also valid for $j=1$. By \eqref{yj} we have 
$$y_r = \f{x_2}{x_3} \Tq y_{r-1} + 1. $$
This completes the proof. 
%

(iii) By the difference equation (ii) for $j=1, 2, \ls, r$, we have 
\begin{align}\label{y's}
y_0 + y_1 + \cs +y_r -1 & = y_0 + \sum_{j=1}^r\f{x_{r+2-j}}{x_{r+2}} \Tq^j y_0 - \sum_{j=1}^{r} \f{x_1 x_{r+2-j}}{x_{r+2}} \Tq^{j-1}y_0 \nonumber \\
& = \sum_{j=0}^{r-1}\f{x_{r+2-j}-x_1 x_{r+1-j}}{x_{r+2}} \Tq^j y_0 + \f{x_{2}}{x_{r+2}} \Tq^{r}y_0.
\end{align} 
On the other hand,  $y_r = \f{x_2}{x_{r+2}} \Tq^ry_0 - \f{x_1x_2}{x_{r+2}} \Tq^{r-1}y_0 + 1$ which is the case of $j=r$ in (ii). 
Applying $\Tq$ to this equation and combining it with \eqref{y's} and the third relation of \eqref{y}, we have 
\begin{align*}
& \f{x_2}{x_{r+2}} \Tq^{r+1} y_0 - \f{x_1x_2}{x_{r+2}} \Tq^{r}y_0 \\
& = \f{z}{1-z}x_2 + \f{z+(1-z)t}{1-z}x_2 \left( \sum_{j=0}^{r-1}\f{x_{r+2-j}-x_1 x_{r+1-j}}{x_{r+2}} \Tq^j y_0 + \f{x_{2}}{x_{r+2}} \Tq^{r}y_0 \right). 
\end{align*}   
Hence we obtain
\begin{align*}
& (1-z) \Tq^{r+1} y_0 - x_1 (1-z) \Tq^{r}y_0 \\
& =  x_{r+2}z +  \{ z+(1-z)t \} \left( \sum_{j=0}^{r-1} (x_{r+2-j}-x_1 x_{r+1-j}) \Tq^j y_0 + x_{2} \Tq^{r}y_0 \right) 
\end{align*}
and the conclusion.    
\end{proof}
%
\subsection{Relation between $\Phi^t_j(z)$ and the basic hypergeometric functions}
First, we prove the following relation between $\Phi^t_{r-1}(z)$ and the basic hypergeometric function ${}_{r+2} \phi _{r+1}$. 
\begin{thm}\label{pr-1}
Let $r$ be a positive integer. 
We have
\begin{align*}
\Phi^t_{r-1} & = \f{x_{r+2} z}{1-(x_1 + t x_2) - t \sum_{j=0}^{r-1}(x_{r+2-j} - x_1 x_{r+1-j})} \\
& \qquad \times  {}_{r+2} \phi _{r+1} 
 \left[ 
 \begin{array}{c}
 q, a_1, \ls, a_{r+1} \\
 b_1, \ls, b_{r+1}
 \end{array}
 ; q, \f{b_1 \cs b_{r+1}}{q^{r+1} a_1 \cs a_{r+1}} z
 \right],  
\end{align*}
where $a_i, b_i$'s are given by \eqref{ai} and \eqref{bi}. 
\end{thm}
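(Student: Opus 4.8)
The plan is to solve the $q$-difference equation for $y_0=\Phi^t_{r-1}(z)$ given in Corollary \ref{cor5}(iii) by the power-series (Frobenius) method. Write $\Phi^t_{r-1}(z)=\sum_{n\ge 0}g_n z^n$; since $G^t_{r-1}(0,0,\ldots,0;z)=0$ (the index set $I_{r-1}$ with all parameters zero is empty, as $h_r\ge 1$ is forced) and, more precisely, the lowest-degree term of $\Phi^t_{r-1}$ comes from indices contributing a single power of $z$, one checks $g_0=0$ and reads off $g_1$ from matching the constant term on the right-hand side $x_{r+2}z$. The key computational fact is that $\Tq z^n = [n]z^n$, so applying $\Tq$ repeatedly turns the left side of the equation in Corollary \ref{cor5}(iii) into $\sum_n P(q^n)g_n z^n - z\sum_n Q(q^n)g_n z^n$ for two explicit polynomials $P,Q$ in $[n]$ (equivalently in $q^n$); comparing coefficients of $z^{n+1}$ gives a first-order recursion $P([n+1])g_{n+1}=Q([n])g_n$ for $n\ge 1$, together with the initial condition fixing $g_1$.

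First I would identify the two characteristic polynomials. From the equation, the coefficient of $\Tq^j y_0$ on the non-$z$ side is built from $1$, $-(x_1+tx_2)$ and $-t(x_{r+2-j}-x_1x_{r+1-j})$, while on the $z$ side it is $1$, $(1-t)x_2-x_1$ and $(1-t)(x_{r+2-j}-x_1x_{r+1-j})$. Using the definitions of $\alpha_i,\beta_i$ as elementary symmetric functions (the $\beta$'s encode $-(x_1+tx_2)$ and $-t(x_{j+1}-x_1x_j)$, the $\alpha$'s encode $(1-t)x_2-x_1$ and $(1-t)(x_{j+1}-x_1x_j)$), the polynomial $\Tq^{r+1}-(x_1+tx_2)\Tq^r-t\sum(x_{r+2-j}-x_1x_{r+1-j})\Tq^j$ factors, when evaluated at $\Tq\mapsto [n]$, essentially as $\prod_{i}([n]-\text{something involving }\beta_i)$, and likewise the $z$-side factors through the $\alpha_i$. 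Then the substitution $a_i=q/(1+(1-q)\alpha_i)$, $b_i=q^2/(1+(1-q)\beta_i)$ (so that $1+(1-q)\alpha_i=q/a_i$, etc.) is precisely the change of variables that converts the factors $[n]-(\cdots)$ into the shape $1-a_iq^{n-1}$ and $1-b_iq^{n-1}$ (up to the identity $[n]=\frac{1-q^n}{1-q}$ and $1-\frac{q^n}{[n]}c = \frac{1-q^n(1+(1-q)c)}{1-q^n}$). After this, the recursion $g_{n+1}/g_n$ becomes a ratio of products $\frac{(1-q^{n+1})(1-a_1q^n)\cdots(1-a_{r+1}q^n)}{(1-b_1q^n)\cdots(1-b_{r+1}q^n)(\text{factor from }\Tq^{r+1}\text{ or }[n+1])}$ times a constant, which telescopes to give exactly the general term $\frac{(q,a_1,\ldots,a_{r+1};q)_n}{(q,b_1,\ldots,b_{r+1};q)_n}\bigl(\frac{b_1\cdots b_{r+1}}{q^{r+1}a_1\cdots a_{r+1}}z\bigr)^n$ of ${}_{r+2}\phi_{r+1}$. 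The prefactor $\frac{x_{r+2}z}{1-(x_1+tx_2)-t\sum(x_{r+2-j}-x_1x_{r+1-j})}$ then appears from $g_1$ and the $n=1$ value of the hypergeometric series.

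The hard part will be verifying that the two $(r+1)$-fold symmetric-function factorizations are correct and that the ``extra'' factor coming from the top-degree term $\Tq^{r+1}$ (and from the $z\,\Tq^{r+1}$ term, i.e.\ the $[n+1]$ versus $1-q^{n+1}$ bookkeeping) assembles precisely into the $(q;q)_n$ in the denominator and the $q^{r+1}$ and $(1-q)$-powers inside the argument $\frac{b_1\cdots b_{r+1}}{q^{r+1}a_1\cdots a_{r+1}}z$ — in other words, tracking all the normalizing constants through the substitutions \eqref{ai}, \eqref{bi}. A secondary subtlety is the justification that the formal power series solution is unique with the given initial data: one must check that $P([n+1])\ne 0$ for all $n\ge 1$ as a rational function in the $x_i$ (generically nonzero), so the recursion determines $g_{n+1}$ from $g_n$ unambiguously, and that the series so produced does converge (for $0<q<1$, $|z|<1$) to $\Phi^t_{r-1}$, which also satisfies the same equation and initial condition by Corollary \ref{cor5}(iii) and the earlier difference formulas. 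Once the recursion and normalization match, the theorem follows by comparing the unique solution with the claimed basic hypergeometric expression.
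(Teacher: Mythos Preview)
Your approach is correct and is a legitimate alternative to the paper's argument. Both proofs rest on the same algebraic fact: the operator in Corollary \ref{cor5}(iii) factors as
\[
\mathcal{L}=\prod_{i=1}^{r+1}(\Tq+\beta_i)\;-\;z\prod_{i=1}^{r+1}(\Tq+\alpha_i),
\]
which is immediate from the defining elementary symmetric functions of the $\alpha_i,\beta_i$. From there, however, the two arguments diverge.

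You solve $\mathcal{L}y_0=x_{r+2}z$ \emph{directly} by the Frobenius method: setting $y_0=\sum_{n\ge 1}g_nz^n$, the action $\Tq z^n=[n]z^n$ turns the equation into the two--term recursion $\prod_i([n+1]+\beta_i)\,g_{n+1}=\prod_i([n]+\alpha_i)\,g_n$ for $n\ge 1$, together with $g_1=x_{r+2}/\prod_i(1+\beta_i)$, which is exactly the claimed prefactor. The substitutions $1+(1-q)\alpha_i=q/a_i$, $1+(1-q)\beta_i=q^2/b_i$ give $[n]+\alpha_i=\frac{q}{(1-q)a_i}(1-a_iq^{n-1})$ and $[n+1]+\beta_i=\frac{q^2}{(1-q)b_i}(1-b_iq^{n-1})$, so the ratio telescopes to the ${}_{r+2}\phi_{r+1}$ coefficients (the $(q;q)_n$ in the denominator cancels against the $(q;q)_n$ coming from the distinguished upper parameter $q$, so no extra factor needs to be produced).

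The paper instead argues \emph{indirectly}: it shows, via the identity $\Tq(zf)=z(q\Tq+1)f$ and the classical $q$-difference equation for ${}_{r+2}\phi_{r+1}$ (Lemma \ref{l10}(ii), from \cite{G}), that $\mathcal{L}(zv(z))=cz$ for some constant $c$; then $\mathcal{L}(cy_0-x_{r+2}zv)=0$, and a separate uniqueness lemma \cite[Lemma 3.4]{Li} forces $cy_0=x_{r+2}zv$. The constant $c$ is identified by comparing the $z^1$-coefficients.

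Your route is more self-contained (no appeal to \cite{G} or \cite{Li}) and makes the emergence of the $q$-Pochhammer ratios completely transparent; the paper's route is shorter once those external results are available, and avoids having to track the constants through the recursion explicitly. Both yield the same uniqueness principle, since your ``$P([n+1])\neq 0$ generically'' is exactly the formal-power-series version of the lemma the paper cites.
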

\noindent
For the proof, we need the next lemmas. 
%
\begin{lem}\label{l10}
\begin{itemize}
\item[(i)] Let $r$ be a positive integer and 
$b_j \neq q^{-m}$ for $m = 0, 1, \ls$ and $j = 1, 2, \ls, r$. 
For any non-negative integer $n$, 
\begin{align*}
&\Dq^n {}_{r+1}\phi_{r}
\left[ 
\begin{array}{c}
a_1, \ls, a_{r+1}\\
b_1, \ls, b_r
\end{array} 
; q, az \right]\\
&= \f{(a_1, \ls, a_{r+1}; q)_n\ a^n}{(b_1, \ls, b_r; q)_n (1-q)^n}
{}_{r+1}\phi_{r}
\left[ 
\begin{array}{c}
a_1q^n, \ls, a_{r+1}q^n\\
b_1q^n, \ls, b_rq^n
\end{array} 
; q, az \right]. 
\end{align*}
\item[(ii)] 
Let $r$ be a positive integer and $a_1 \cs a_{r+1} b_1 \cs b_{r} \neq 0$. Let $b_j \neq q^{-m}$ for $m = 0, 1, \ls$ and $j = 1, 2, \ls, r$. 
Then  
$u(z) = {}_{r+1}\phi_{r}\left[ \begin{array}{c}a_1, \ls, a_{r+1}\\ b_1, \ls, b_r \end{array} ; q, z \right]$ 
satisfies
\begin{align*}
& \Tq \left( \Tq + \f{q-b_1}{b_1(1-q)} \right) \cs \left( \Tq + \f{q-b_r}{b_r(1-q)} \right) u(z) \\
& = \f{a_1 a_2 \cs a_{r+1}}{b_1 b_2 \cs b_r} q^r z \left( \Tq + \f{1-a_1}{a_1(1-q)} \right) \cs \left( \Tq + \f{1-a_{r+1}}{a_{r+1}(1-q)} \right) u(z),  
\end{align*}
and $v(z) = {}_{r+1}\phi_{r}\left[ \begin{array}{c}a_1, \ls, a_{r+1}\\ b_1, \ls, b_r \end{array} ; q, \f{b_1 \cs b_r}{a_1 \cs a_{r+1} q^r}z \right]$ 
satisfies
\begin{align*}
& \Tq \left( \Tq + \f{q-b_1}{b_1(1-q)} \right) \cs \left( \Tq + \f{q-b_r}{b_r(1-q)} \right) v(z) \\
& = z \left( \Tq + \f{1-a_1}{a_1(1-q)} \right) \cs \left( \Tq + \f{1-a_{r+1}}{a_{r+1}(1-q)} \right) v(z). 
\end{align*}
\item[(iii)] For a non-negative integer $n$, 
$$\Tq^n = \sum_{m=0}^{n} S_q(n, m) z^m \Dq^m. $$ 
\end{itemize}
\end{lem}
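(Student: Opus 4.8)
The plan is to prove the three parts by direct computation on power‑series expansions combined with inductions, essentially independently of one another.

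For part (i) I would first treat $n=1$ by differentiating ${}_{r+1}\phi_r[\,a_1,\ldots,a_{r+1};b_1,\ldots,b_r;q,az\,]=\sum_{m\ge 0}\frac{(a_1,\ldots,a_{r+1};q)_m}{(q,b_1,\ldots,b_r;q)_m}(az)^m$ term by term, using $\Dq z^m=[m]z^{m-1}$ and the factorizations $(q;q)_m=(1-q^m)(q;q)_{m-1}$, $(a;q)_m=(1-a)(aq;q)_{m-1}$; after the cancellation $[m]/(q;q)_m=1/((1-q)(q;q)_{m-1})$ and the reindexing $m\mapsto m-1$, the surviving series is exactly ${}_{r+1}\phi_r[\,a_1q,\ldots,a_{r+1}q;b_1q,\ldots,b_rq;q,az\,]$ and the prefactor is the claimed constant for $n=1$. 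The general case follows by induction: apply the $n=1$ identity to $\Dq^n{}_{r+1}\phi_r$ with shifted parameters $a_iq^n,b_jq^n$, and collapse the resulting telescoping product of constants via $(a;q)_{n+1}=(a;q)_n(1-aq^n)$.

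For part (ii) I would match coefficients. Writing $u(z)=\sum_m c_m z^m$ with $c_m/c_{m-1}=\prod_i(1-a_iq^{m-1})/\big((1-q^m)\prod_j(1-b_jq^{m-1})\big)$, and noting that $(\Tq+\gamma)z^m=([m]+\gamma)z^m$, the left-hand operator multiplies the $z^m$-coefficient of $u$ by $[m]\prod_j([m]+\tfrac{q-b_j}{b_j(1-q)})$, while $z\prod_i(\Tq+\tfrac{1-a_i}{a_i(1-q)})$ carries the $z^{m-1}$-coefficient to the $z^m$-coefficient after multiplication by $\prod_i([m-1]+\tfrac{1-a_i}{a_i(1-q)})$. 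Equating the two expressions for the $z^m$-coefficient and invoking the elementary identities $1-aq^m=a(1-q)([m]+\tfrac{1-a}{a(1-q)})$, $1-q^m=(1-q)[m]$, and $[m-1]=([m]-1)/q$ — the last giving $[m]+\tfrac{q-b}{b(1-q)}=\tfrac{q}{b(1-q)}(1-bq^{m-1})$ — reduces the identity to $c_m/c_{m-1}$ times $b_1\cdots b_r/(q^ra_1\cdots a_{r+1})$, which is absorbed by the constant $q^ra_1\cdots a_{r+1}/(b_1\cdots b_r)$ in front of $z$ on the right. This gives the equation for $u$. For $v(z)=u(\mu z)$ with $\mu=b_1\cdots b_r/(q^ra_1\cdots a_{r+1})$, note that $\Tq$ commutes with the rescaling $z\mapsto\mu z$; applying the $u$-equation at $\mu z$ produces an extra factor $\mu$ from the right-hand $z$, which cancels the reciprocal constant $1/\mu$ there, leaving precisely the stated equation for $v$.

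For part (iii) I would induct on $n$; the base case $\Tq^0=\mathrm{id}=S_q(0,0)z^0\Dq^0$ is trivial. From $\Tq=z\Dq$ and the twisted Leibniz rule $\Dq(fg)(z)=(\Dq f)(z)g(z)+f(qz)(\Dq g)(z)$ with $f(z)=z^m$ one gets $\Dq(z^m\Dq^m)=[m]z^{m-1}\Dq^m+q^mz^m\Dq^{m+1}$; multiplying by $z$, summing against $S_q(n,m)$, and reindexing the second term $m\mapsto m-1$ yields $\Tq^{n+1}=\sum_m([m]S_q(n,m)+q^{m-1}S_q(n,m-1))z^m\Dq^m$, and the bracketed coefficient is $S_q(n+1,m)$ by the defining recursion of the $q$-Stirling numbers of the second kind. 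The main obstacle is the constant-tracking in part (ii): one juggles several identities relating $[m]$, $[m-1]$, $q^m$ and the parameters at once, and the mismatch between the index $q^m$ in $(1-q^m)$ and the index $q^{m-1}$ in the $(1-b_jq^{m-1})$ factors is where bookkeeping errors are most likely; parts (i) and (iii) are routine once the term-by-term action of $\Dq$ and the twisted Leibniz rule are recorded.
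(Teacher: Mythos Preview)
Your proposal is correct and follows essentially the same approach as the paper: parts (i) and (iii) are handled by induction on $n$, exactly as the paper indicates, and your direct coefficient-matching argument for part (ii) is precisely the content of \cite[Exercise 1.31]{G}, which is what the paper cites rather than proves. The only difference is that you supply the details for (ii) where the paper appeals to the reference, so your treatment is more self-contained but not genuinely different in method.
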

\begin{proof}
(i) and (iii) are shown by induction on $n$. (ii) is derived from a consequence of \cite[Exercise 1.31]{G}. 
\end{proof}
\begin{lem}\label{LEM}
Suppose $f(z)$ is holomorphic at $z=0$. 
If $(\Tq -1) (f(z)) = 0$, 
we have $f(z) = c z$ for certain constant $c$.  
\end{lem}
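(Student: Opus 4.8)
The plan is to expand $f(z)$ as a power series $f(z) = \sum_{n=0}^{\infty} c_n z^n$, which is legitimate since $f$ is holomorphic at $z=0$, and then read off what the hypothesis $(\Tq - 1)(f(z)) = 0$ forces on the coefficients. Recall that $(\Tq f)(z) = \frac{f(z) - f(qz)}{1-q}$, so applying $\Tq$ to the monomial $z^n$ gives $\frac{z^n - q^n z^n}{1-q} = [n] z^n$. Hence $(\Tq f)(z) = \sum_{n=0}^{\infty} [n] c_n z^n$, and the equation $(\Tq - 1)(f(z)) = 0$ becomes $\sum_{n=0}^{\infty} ([n] - 1) c_n z^n = 0$.

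By uniqueness of power series coefficients, this forces $([n] - 1) c_n = 0$ for every $n \geq 0$. Since $[n] = \frac{1-q^n}{1-q}$, we have $[0] = 0$, $[1] = 1$, and $[n] \neq 1$ for $n \geq 2$ (as a formal object in $\Q[[q]]$, or for generic/$0 < q < 1$). Therefore $c_n = 0$ for $n = 0$ and for all $n \geq 2$, while $c_1$ is unconstrained. This yields $f(z) = c_1 z$, so setting $c = c_1$ gives the claim.

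I do not anticipate a genuine obstacle here; the only point requiring a word of care is the assertion that $[n] \neq 1$ for $n \geq 2$, which should be justified according to whether $q$ is treated as a formal variable (where $[n] - 1 = \frac{q - q^n}{1-q}$ is a nonzero element of the coefficient ring, hence a non-zero-divisor) or as a fixed complex number with $0 < q < 1$ (where $[n]$ is strictly increasing in $n$). Either way the conclusion $c_n = 0$ for $n \geq 2$ follows, and the short argument above is complete.
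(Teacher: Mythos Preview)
Your proof is correct and essentially identical to the paper's: both expand $f$ as a power series at $0$ and read off the coefficient conditions. The only cosmetic difference is that the paper first rewrites $(\Tq-1)(f(z))=0$ as the functional equation $f(qz)=qf(z)$ before substituting the series, whereas you compute $\Tq z^n=[n]z^n$ directly; the resulting constraint $([n]-1)c_n=0$ is the same either way.
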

\begin{proof}
By assumption we have $f(qz) = q f(z)$. 
Substituting $f(z) = \sum_{n=0}^{\infty}a_n z^n $ into this equality, we have $a_0 = a_2 = a_3 = \cs = 0$ while $a_1$ is arbitrary. 
\end{proof}
\begin{proof}[Proof of Theorem \ref{pr-1}]
Put 
\begin{align*}
\mathcal{L} & := 
\Tq^{r+1} -(x_1+tx_2)\Tq^r - t \sum_{j=0}^{r-1} (x_{r+2-j} - x_1 x_{r+1-j})\Tq^j \\
&\quad  \left. -z \left( \Tq^{r+1} + ((1 - t) x_2 - x_1) \Tq^r + (1-t) \sum_{j=0}^{r-1}(x_{r+2-j} - x_1 x_{r+1-j}) \Tq^j
\right)\right.
\end{align*}
By Corollary \ref{cor5} (iii), we have 
\begin{align}\label{L1}
\mathcal{L} (y_0) = x_{r+2} z, 
\end{align}
where $y_0=\Phi_{r-1}^t$. 

By the way, according to the definition of $\alpha_i$'s and $\beta_i$'s (see \S \ref{Intro}), we have 
$$\mathcal{L} = (\Tq + \beta_1) \cs (\Tq + \beta_{r+1}) - z (\Tq + \alpha_1) \cs (\Tq + \alpha_{r+1}). $$
Put
\begin{align*}
\widetilde{\mathcal{L}}& :=
\Tq \left( \Tq+\f{1}{q}( 1+ \beta_1) \right) \cs \left( \Tq+\f{1}{q} (1 + \beta_{r+1}) \right) \\
& \quad - z q \left( \Tq+\f{1}{q} \right) \left( \Tq+\f{1}{q}(1 + \alpha_1) \right) \cs \left( \Tq+\f{1}{q}(1 + \alpha_{r+1}) \right). 
\end{align*}
Since $\Tq (z f(z)) = z(q \Tq +1) (f(z))$,  we find
\begin{align*}
(\Tq -1) \mathcal{L} (z f(z)) & = z q^{r+2}\widetilde{\mathcal{L}} (f(z)). 
\end{align*}
By Lemma \ref{l10} (ii), we have
$$\widetilde{\mathcal{L}} (v(z)) = 0, $$
where 
$v(z) = {}_{r+2}\phi_{r+1}\left[ \begin{array}{c} q, a_1, \ls, a_{r+1}\\ b_1, \ls, b_{r+1} \end{array} ; q, \f{b_1 \cs b_{r+1}}{a_1 \cs a_{r+1} q^{r+1}} z \right]$, 
and hence
$$(\Tq -1) \mathcal{L} (z v(z)) = 0 . $$
In particular, by Lemma \ref{LEM}, we have 
\begin{align}\label{L2}
\mathcal{L} (z v(z)) = c z
\end{align}
for certain constant $c$. 
From \eqref{L1} and \eqref{L2}, we have 
$$\mathcal{L}(c y_0 - x_{r+2} z v(z)) = 0. $$
Since both $y_0$ and $z v(z)$ have no constant terms, we have 
$$c y_0 = x_{r+2} z v(z)$$
due to \cite[Lemma 3.4]{Li}.  

Put $y_0 = Az + o(z^2)$ and let us determine the constant $A$. 
Notice that $\Tq z^n = [n] z^n$ for any positive integer $n$. Then from 
the difference equation of Corollary \ref{cor5} (iii), we get
$$\left\{ 1-(x_1+tx_2) - t \sum_{j=0}^{r-1}(x_{r+2-j} - x_1 x_{r+1-j})\right\} Az + o(z^2) = x_{r+2} z, $$
and hence 
$$A \left( = \f{x_{r+2}}{c} \right)=\f{x_{r+2}}{1-(x_1+tx_2) - t \sum_{j=0}^{r-1}(x_{r+2-j} - x_1 x_{r+1-j})} . $$
This completes the proof. 
\end{proof}
\noindent

Secondly, we prove the following relation for $\Phi_j^t(z)$. 
\begin{thm}\label{lem7}
Put $y_0 = \Phi^t_{r-1}$. For $-1 \leq j \leq r-1$, we have  
$$\Phi^t_{j}(z) = \f{1}{x_{r+2}} \sum_{i=0}^{r-1-j} A^{(j)}_{i} z^i \Dq^i y_0 + \delta_{j, -1}, $$
where 
$$A^{(j)}_i = \sum_{m=i}^{r-1-j} (x_{r+2-m} - x_1 x_{r+1-m}) S_q(m, i) + x_1 x_{j+2} S_q(r-1-j, i)$$
for $0 \leq i \leq r-1-j$. 
\end{thm}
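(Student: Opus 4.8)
The plan is to derive this from Corollary \ref{cor5} by combining the telescoping structure of the family $y_0,\dots,y_r$ with the operator identity $\Tq^n = \sum_{i=0}^n S_q(n,i)\,z^i\Dq^i$ of Lemma \ref{l10}(iii). No new $q$-difference equation is needed: everything is already encoded in Corollary \ref{cor5}(ii).

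First I would record the elementary telescoping. From $y_0 = \Phi^t_{r-1}$, $y_j = \Phi^t_{r-1-j} - \Phi^t_{r-j}$ for $1 \le j \le r-1$, and $y_r = \Phi^t - \Phi^t_0$, one gets, with $s := r-1-j$, that for every $-1 \le j \le r-1$
$$\Phi^t_j = \sum_{m=0}^{s} y_m,$$
the term $m = r$ (present only when $j=-1$, i.e.\ $s=r$) being responsible for the extra $\delta_{j,-1}$ through the $+\delta_{m,r}$ in the formula for $y_r$. Into this sum I would substitute the closed form
$$y_m = \f{x_{r+2-m}}{x_{r+2}}\,\Tq^m y_0 \;-\; \f{x_1 x_{r+2-m}}{x_{r+2}}\,\Tq^{m-1}y_0 \;+\; \delta_{m,r}$$
from Corollary \ref{cor5}(ii) (valid for $m=1,\dots,r$, while $m=0$ simply contributes $y_0$), reindex $m\mapsto m-1$ in the part carrying $\Tq^{m-1}$, and then expand each $\Tq^m$ via Lemma \ref{l10}(iii).

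Collecting the coefficient of $z^i\Dq^i y_0$ — using $S_q(m,0)=\delta_{m,0}$ to absorb both the stray $y_0$ term and the index $i=0$ — yields
$$\f{1}{x_{r+2}}\left( \sum_{m=i}^{s} x_{r+2-m}\,S_q(m,i) \;-\; x_1 \sum_{m=i}^{s-1} x_{r+1-m}\,S_q(m,i) \right),$$
and it then remains to check that this equals $A^{(j)}_i/x_{r+2}$. Splitting the $m=s$ summand off $-x_1\sum_{m=i}^{s} x_{r+1-m}S_q(m,i)$ and re-adding it as $x_1 x_{r+1-s}S_q(s,i)=x_1 x_{j+2}S_q(r-1-j,i)$ (recall $j+2=r+1-s$) recovers precisely the displayed definition of $A^{(j)}_i$. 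The boundary situations are immediate from $S_q(m,0)=\delta_{m,0}$: when $s=0$ one finds $A^{(r-1)}_0=x_{r+2}$, giving back $\Phi^t_{r-1}=y_0$, and when $i=0$, $s\ge1$ one finds $A^{(j)}_0=x_{r+2}-x_1x_{r+1}$.

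The whole computation is bookkeeping, and the only delicate point is the reindexing of the $\Tq^{m-1}$ part together with the one-term manipulation that makes the at-first-sight-anomalous summand $x_1 x_{j+2}S_q(r-1-j,i)$ appear — one must recognize it as exactly the $m=s$ boundary term that was over-subtracted. As an alternative route I could instead argue by downward induction on $j$ starting from $j=r-1$, using $\Phi^t_j = \Phi^t_{j+1} + y_{r-1-j}$ together with the recursion $S_q(n,i)=q^{i-1}S_q(n-1,i-1)+[i]S_q(n-1,i)$ and $\Tq = z\Dq$; but I expect the direct telescoping argument above to be the cleaner presentation.
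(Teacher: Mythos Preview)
Your proposal is correct and follows essentially the same route as the paper: write $\Phi^t_j$ as the telescoping sum $y_0+\sum_{m=1}^{r-1-j}y_m$ (picking up $\delta_{j,-1}$ when $j=-1$), insert Corollary~\ref{cor5}(ii), and expand each $\Tq^m$ via Lemma~\ref{l10}(iii). The only cosmetic difference is that the paper first reindexes the $\Tq^{m-1}$ part to obtain the combined form $\sum_{m=0}^{r-1-j}\frac{x_{r+2-m}-x_1x_{r+1-m}}{x_{r+2}}\Tq^m y_0 + \frac{x_1x_{j+2}}{x_{r+2}}\Tq^{r-1-j}y_0$ and then applies Lemma~\ref{l10}(iii), whereas you apply Lemma~\ref{l10}(iii) first and do the boundary-term split afterwards; the bookkeeping is the same either way.
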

\begin{proof}
If $y_0 = \Phi^t_{r-1}$, $y_1 = \Phi^t_{r-2} - \Phi^t_{r-1}$, \ls, $y_{r-1-j} = \Phi^t_{j} - \Phi^t_{j+1}$, 
we have 
$$\Phi^t_j= y_0 + \sum_{m=1}^{r-1-j} y_m. $$
Using Corollary \ref{cor5} (ii), 
we have 
\begin{align*}
\Phi^t_j 
& = y_0 + \sum_{m=1}^{r-1-j} \left(  \f{x_{r+2-m}}{x_{r+2}}\Tq^m y_0 -\f{x_1 x_{r+2-m}}{x_{r+2}}\Tq^{m-1}y_0 \right) + \delta_{j, -1} \\
& = \sum_{m=0}^{r-1-j} \f{x_{r+2-m} - x_1 x_{r+1-m}}{x_{r+2}}\Tq^m y_0 +\f{x_1 x_{j+2}}{x_{r+2}}\Tq^{r-1-j}y_0 + \delta_{j, -1}. \\
\end{align*}
By Lemma \ref{l10} (iii), 
\begin{align*}
\Phi^t_j 
&  = \sum_{m=0}^{r-1-j} \f{x_{r+2-m} - x_1 x_{r+1-m}}{x_{r+2}} \sum_{i=0}^{m} S_q(m, i) z^i \Dq^i y_0 \\
& \quad +\f{x_1 x_{j+2}}{x_{r+2}} \sum_{i=0}^{r-1-j} S_q(r-1-j, i) z^i \Dq^i y_0 + \delta_{j, -1} \\
& = \sum_{i=0}^{r-1-j} \sum_{m=i}^{r-1-j} \f{x_{r+2-m} - x_1 x_{r+1-m}}{x_{r+2}} S_q(m, i) z^i \Dq^i y_0\\
& \quad +\f{x_1 x_{j+2}}{x_{r+2}} \sum_{i=0}^{r-1-j} S_q(r-1-j, i) z^i \Dq^i y_0 + \delta_{j, -1} \\
& =  \f{1}{x_{r+2}} \sum_{i=0}^{r-1-j} A^{(j)}_{i} z^i \Dq^i y_0 + \delta_{j, -1}. 
\end{align*}
\end{proof}
Finally, we show the following relation between $\Phi_j^t$ and the basic hypergeometric function ${}_{r+2} \phi_{r+1}$. 
\begin{thm}\label{thm9}
For $-1 \leq j \leq r-1$,  
we have
\begin{align*}
\Phi^t_{j}(z) & = \f{1}{1-(x_1 + t x_2) - t \sum_{i=0}^{r-1}(x_{r+2-i} - x_1 x_{r+1-i}) }\\
& \quad \times \sum_{i=0}^{r-1-j} \widetilde{A^{(j)}_i} B_i z^{i+1} 
{}_{r+2}\phi_{r+1}\left[ \begin{array}{c} q^{i+1}, a_1q^i, \ls, a_{r+1}q^i \\ b_1q^i, \ls, b_{r+1}q^i \end{array} ; q, \f{b_1 \cs b_{r+1}}{q^{r+1} a_1 \cs a_{r+1}} z \right]
+\delta_{j, -1}, 
\end{align*}
where 
$$\widetilde{A^{(j)}_i} = \sum_{m=i}^{r-1-j} (x_{r+2-m} -x_1 x_{r+1-m}) S_q(m+1, i+1) + x_1 x_{j+2} S_q(r-j, i+1)$$
for $0 \leq i \leq r-1-j$, and 
$$B_i=\f{(q, a_1, \ls, a_{r+1}; q)_i}{(b_1, \ls, b_{r+1}; q)_i} \left(\f{b_1 \cs b_{r+1}}{q^{r+1} (1-q) a_1 \cs a_{r+1}} \right)^i$$
for $0 \leq i \leq r$. 
\end{thm}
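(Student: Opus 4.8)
The plan is to combine the three theorems established just above: the closed form for $\Phi^t_{r-1}$ in Theorem~\ref{pr-1}, the expansion of $\Phi^t_j$ in terms of $z^i\Dq^i y_0$ in Theorem~\ref{lem7}, and the differentiation rule for basic hypergeometric series in Lemma~\ref{l10}(i). First I would take $y_0 = \Phi^t_{r-1}$ and apply Theorem~\ref{pr-1} to write
$$
y_0 = \f{x_{r+2}z}{1-(x_1+tx_2)-t\sum_{i=0}^{r-1}(x_{r+2-i}-x_1x_{r+1-i})}\,
{}_{r+2}\phi_{r+1}\!\left[\begin{array}{c} q,a_1,\ls,a_{r+1}\\ b_1,\ls,b_{r+1}\end{array};q,\f{b_1\cs b_{r+1}}{q^{r+1}a_1\cs a_{r+1}}z\right].
$$
Then the key step is to compute $z^i\Dq^i y_0$ for $0\le i\le r-1-j$. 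Since $y_0$ is $z$ times a ${}_{r+2}\phi_{r+1}$ in the variable $z$ (with the hypergeometric argument proportional to $z$), I would first use the Leibniz-type rule $\Dq(zf(z)) = f(z) + \Dq(f)(qz)\cdot$(appropriate factor)—or more cleanly, expand $zf(z)$ as a power series and differentiate termwise—and then invoke Lemma~\ref{l10}(i) with $a = \f{b_1\cs b_{r+1}}{q^{r+1}a_1\cs a_{r+1}}$ to shift the parameters, picking up the prefactor $\f{(q,a_1,\ls,a_{r+1};q)_i}{(b_1,\ls,b_{r+1};q)_i}\,\f{a^i}{(1-q)^i}$, which is exactly $B_i$ after absorbing the power of $z$. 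This produces
$$
z^i\Dq^i y_0 = \f{x_{r+2}\,B_i\,z^{i+1}}{1-(x_1+tx_2)-t\sum(x_{r+2-m}-x_1x_{r+1-m})}\,
{}_{r+2}\phi_{r+1}\!\left[\begin{array}{c} q^{i+1},a_1q^i,\ls,a_{r+1}q^i\\ b_1q^i,\ls,b_{r+1}q^i\end{array};q,\f{b_1\cs b_{r+1}}{q^{r+1}a_1\cs a_{r+1}}z\right],
$$
at least up to a bookkeeping adjustment of the exponents and the leading $q$-factorial term; the presence of the extra $q^{i+1}$ (versus just $q^i,\ls,q^i$) in the numerator row comes precisely from differentiating the extra factor of $z$ out front.

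Substituting this into the formula $\Phi^t_j = \f{1}{x_{r+2}}\sum_{i=0}^{r-1-j}A^{(j)}_i z^i\Dq^i y_0 + \delta_{j,-1}$ from Theorem~\ref{lem7}, the $x_{r+2}$'s cancel and one obtains a sum $\sum_i A^{(j)}_i B_i z^{i+1}\,{}_{r+2}\phi_{r+1}[\cdots]$ divided by the common denominator. The remaining task is to match $A^{(j)}_i$ against $\widetilde{A^{(j)}_i}$. Here I expect the $q$-Stirling recursion $S_q(m+1,i+1) = q^i S_q(m,i) + [i+1]S_q(m,i+1)$ to be the essential identity: the shift $S_q(m,i)\mapsto S_q(m+1,i+1)$ is induced by the replacement $\Tq^i \mapsto$ the operator governing the shifted series, exactly as in the proof of Theorem~\ref{qLi} where $A_j = q^jA_j^\ast + [j+1]A_{j+1}^\ast$ was used. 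So I would either (a) re-derive the expansion directly in terms of $\Tq$ on $y_0$ and apply Lemma~\ref{l10}(iii) with the $q$-Stirling identity, or (b) verify by a telescoping/index-shift argument that $\sum_i A^{(j)}_i B_i z^i\Dq^i y_0$ equals $\sum_i \widetilde{A^{(j)}_i}\widetilde{B_i}z^{i+1}{}_{r+2}\phi_{r+1}[\cdots]$ term by term after reindexing.

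The main obstacle I anticipate is the precise bookkeeping in the differentiation step: keeping track of how the factor $z$ in front of the ${}_{r+2}\phi_{r+1}$ in $y_0$ interacts with $\Dq^i$, which is why the top parameter row becomes $q^{i+1}$ rather than the naive $q^i$ shift of all $r+2$ numerator parameters, and correspondingly why one gets $\widetilde{A^{(j)}_i}$ with argument $m+1$ and $r-j$ in the $q$-Stirling numbers rather than $m$ and $r-1-j$. Getting these index shifts exactly right—and confirming that the truncation bound $i\le r-1-j$ is respected because $S_q(r-1-j,i)=0$ for $i>r-1-j$—is the delicate part; everything else is a direct assembly of Theorems~\ref{pr-1} and \ref{lem7} with Lemma~\ref{l10}.
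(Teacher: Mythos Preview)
Your approach is the paper's: substitute Theorem~\ref{pr-1} into Theorem~\ref{lem7}, handle the extra factor of $z$ in $y_0 = A x_{r+2}\,z\,v(z)$ by a $q$-Leibniz rule, apply Lemma~\ref{l10}(i) to $\Dq^i v$, and convert $A^{(j)}_i$ into $\widetilde{A^{(j)}_i}$ via the $q$-Stirling recursion. One correction to your intermediate display: a single term $z^i\Dq^i y_0$ is \emph{not} proportional to $B_i z^{i+1}\,{}_{r+2}\phi_{r+1}[q^{i+1},\ldots]$; the precise identity (the paper isolates it as Lemma~\ref{l13}) is $\Dq^i(zv) = [i]\,\Dq^{i-1}v + q^i z\,\Dq^i v$, so each $z^i\Dq^i(zv)$ contributes to two shifted series. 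Only after summing against the $A^{(j)}_i$ and reindexing does the combination $[i{+}1]A^{(j)}_{i+1} + q^i A^{(j)}_i$ emerge as the coefficient of $z^{i+1}\Dq^i v$, and this combination equals $\widetilde{A^{(j)}_i}$ by exactly the recursion $S_q(m{+}1,i{+}1)=q^iS_q(m,i)+[i{+}1]S_q(m,i{+}1)$ you cited---so the $A\to\widetilde{A}$ passage is not a separate matching step but is forced by the Leibniz rule itself.
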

\noindent
For the proof, we need the next lemma. 
\begin{lem}\label{l13}
For any non-negative integer $i$, 
\begin{align*}
\Dq^i (z f(z)) = [i] \Dq^{i-1}(f(z)) + q^i z \Dq^i(f(z)). 
\end{align*}
\end{lem}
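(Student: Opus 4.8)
The plan is a direct induction on $i$. The base case $i=0$ is trivial: the right-hand side is $[0]\,\Dq^{-1}(f(z)) + q^0 z\,\Dq^0(f(z))$, and since $[0]=0$ the first summand is discarded, leaving $zf(z) = \Dq^0(zf(z))$; in particular no meaning for $\Dq^{-1}$ is needed. The heart of the matter is the single-factor ``$q$-Leibniz rule'' $\Dq\bigl(z\,g(z)\bigr) = g(z) + qz\,(\Dq g)(z)$, which is also the case $i=1$ and which I would establish first straight from the definition: since $(\Dq g)(z) = \f{g(z)-g(qz)}{(1-q)z}$ gives $g(qz) = g(z) - (1-q)z\,(\Dq g)(z)$, substituting into $\Dq(zg(z)) = \f{z g(z) - qz\,g(qz)}{(1-q)z} = \f{g(z) - q\,g(qz)}{1-q}$ yields the rule.

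For the inductive step, assume the identity holds for some $i\ge 0$ and apply $\Dq$ to both sides: this gives $\Dq^{i+1}(zf(z)) = [i]\,\Dq^i(f(z)) + q^i\,\Dq\bigl(z\,\Dq^i f(z)\bigr)$. Using the single-factor rule with $g = \Dq^i f$ rewrites $\Dq\bigl(z\,\Dq^i f(z)\bigr)$ as $\Dq^i f(z) + qz\,\Dq^{i+1}f(z)$, so $\Dq^{i+1}(zf(z)) = \bigl([i] + q^i\bigr)\Dq^i f(z) + q^{i+1}z\,\Dq^{i+1}f(z)$, and the routine $q$-integer identity $[i] + q^i = \f{1-q^i}{1-q} + q^i = \f{1-q^{i+1}}{1-q} = [i+1]$ closes the induction.

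There is no genuine obstacle here; this is a one-line computation dressed up as an induction. The only points requiring a little care are keeping track of the powers of $q$ and of the $q$-integer coefficients across the inductive step, and reading the $i=0$ boundary term $[i]\,\Dq^{i-1}(f(z))$ as $0$ rather than insisting that $\Dq^{-1}$ be defined. (The same identity could alternatively be obtained by iterating the substitution $f(qz) = f(z) - (1-q)z(\Dq f)(z)$ directly, but the inductive route above is the cleanest.)
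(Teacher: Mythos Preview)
Your proof is correct and follows essentially the same route as the paper: induction on $i$, with the $i=1$ case (the $q$-Leibniz rule $\Dq(zg)=g+qz\,\Dq g$) verified directly from the definition and then reused in the inductive step, closed by $[i]+q^i=[i+1]$. The only cosmetic difference is that the paper obtains the $i=1$ identity by adding and subtracting $qz f(z)$ in the numerator rather than by substituting $g(qz)=g(z)-(1-q)z(\Dq g)(z)$.
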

\begin{proof}
We prove this lemma by induction on $i$. 
If $i=0$, it holds because both sides become $zf(z)$. 
If $i = 1$, by the definition of $\Dq$, 
\begin{align*}
\Dq(zf(z)) & = \f{z f(z) - qz f(qz)}{(1-q)z} \\
& =\f{zf(z) - qz f(z) + qz f(z) - qz f(qz)}{(1-q)z} \\
& = f(z) + qz \Dq(f(z)). 
\end{align*}
Therefore the identity holds. 
Assume that the identity holds for $i$ ($i \geq 1$).  
By the induction hypothesis, 
\begin{align*}
\Dq^{i+1}(zf(z)) & = \Dq \Dq^{i}(zf(z)) = \Dq \left(  [i] \Dq^{i-1}(f(z)) + q^i z \Dq^i(f(z)) \right)\\
& =  [i] \Dq^{i}(f(z)) + q^i \Dq ( z \Dq^i(f(z))) \\
& =  [i] \Dq^{i}(f(z)) + q^i \Dq^i(f(z)) +  q^{i+1} z \Dq^{i+1}(f(z)). 
\end{align*}
By $[i] + q^i = \f{1-q^i}{1-q} + q^i = \f{1-q^{i+1}}{1-q} = [i+1]$, we have 
$$\Dq^{i+1}(zf(z)) = [i+1] \Dq^{i}(f(z)) + q^{i+1} z \Dq^{i+1}(f(z)).$$
\end{proof}
\begin{proof}[Proof of Theorem \ref{thm9}]
Set 
$$A=\f{1}{1-(x_1 + t x_2) - t \sum_{i=0}^{r-1}(x_{r+2-i} - x_1 x_{r+1-i}) } $$
and 
$$v(z) = {}_{r+2}\phi_{r+1}\left[ \begin{array}{c} q, a_1, \ls, a_{r+1} \\ b_1, \ls, b_{r+1} \end{array} ; q, \f{b_1 \cs b_{r+1}}{q^{r+1}a_1 \cs a_{r+1}} z \right]. $$
Applying Theorem \ref{pr-1} to Theorem \ref{lem7},  
we have 
$$\Phi^t_{j}(z) = A \sum_{i=0}^{r-1-j} A^{(j)}_{i} z^i \Dq^i(zv(z)) + \delta_{j, -1}.$$
By Lemma \ref{l13}, 
\begin{align*}
\sum_{i=0}^{r-1-j} A^{(j)}_{i} z^i \Dq^i(zv(z))
 & = \sum_{i=0}^{r-1-j} A^{(j)}_{i} z^i [i] \Dq^{i-1}(v(z)) + \sum_{i=0}^{r-1-j} A^{(j)}_{i} z^{i+1} q^i \Dq^i(v(z)) \\
 & = \sum_{i=0}^{r-1-j} \left( A^{(j)}_{i+1} [i+1]  + A^{(j)}_{i}q^i \right) z^{i+1} \Dq^i(v(z)),  
\end{align*}
where $A^{(j)}_{r-j} = 0$. 
According to the definition of $A^{(j)}_{i}$, $\widetilde{A^{(j)}_{i}}$ and the $q$-Stirling number $S_q$, 
$$A^{(j)}_{i+1} [i+1]  + A^{(j)}_{i}q^i = \widetilde{A^{(j)}_i}. $$ 
Also by Lemma \ref{l10} (i), we obtain
$$\Dq^i(v(z)) = B_i \ {}_{r+2}\phi_{r+1}\left[ \begin{array}{c} q^{i+1}, a_1q^i, \ls, a_{r+1}q^i \\ b_1q^i, \ls, b_{r+1}q^i \end{array} ; q, \f{b_1 \cs b_{r+1}}{q^{r+1}a_1 \cs a_{r+1}}z \right]. $$
This completes the proof.
\end{proof}
For $i=0, 1, \ls, r-1$, we set  
$$v_i = {}_{r+2}\phi_{r+1}\left[ \begin{array}{c} q^{i+1}, a_1q^i, \ls, a_{r+1}q^i \\ b_1q^i, \ls, b_{r+1}q^i \end{array} ; q, \f{b_1 \cs b_{r+1}}{q^{r}a_1 \cs a_{r+1}} \right] $$
and 
$$\widetilde{B_i} = B_i q^{i+1} = q \f{(q, a_1, \ls, a_{r+1}; q)_i}{(b_1, \ls, b_{r+1}; q)_i} \left( \f{b_1 \cs b_{r+1}}{q^r(1-q)a_1 \cs a_{r+1}} \right)^i. $$
According to Theorem \ref{thm9}, we obtain the following corollary. 
\begin{cor}\label{Cor}
The identity 
$$\Phi^t_j(q) = \f{1}{c} \sum_{i=0}^{r-1-j} \widetilde{A^{(j)}_i} \widetilde{B_i} v_i,\quad j=0, 1, \ls, r-1$$
holds, where  
\begin{align*}
c= 1-(x_1+tx_2) - t\sum_{i=0}^{r-1} (x_{r+2-i} - x_1 x_{r+1-i}).  
\end{align*}
Or equivalently we have
$$
\begin{pmatrix} \Phi^t_0(q) \\ \Phi^t_1(q) \\ \vdots \\ \Phi^t_{r-1}(q)  \end{pmatrix}
= \f{1}{c} A \begin{pmatrix} \widetilde{B_0} v_0 \\ \widetilde{B_1} v_1 \\ \vdots \\ \widetilde{B_{r-1}} v_{r-1} \end{pmatrix}$$
with
$$A = \begin{pmatrix} 
\widetilde{A^{(0)}_0} &   \widetilde{A^{(0)}_1} & \cs &  \widetilde{A^{(0)}_{r-2}} &  \widetilde{A^{(0)}_{r-1}} \\
\widetilde{A^{(1)}_0} &   \widetilde{A^{(1)}_1} & \cs &  \widetilde{A^{(1)}_{r-2}} &  \\
\vdots & \vdots & \iddots & & \\
\widetilde{A^{(r-2)}_0} &   \widetilde{A^{(r-2)}_1} & & & \\
\widetilde{A^{(r-1)}_0} & & & & \end{pmatrix}. $$
\end{cor}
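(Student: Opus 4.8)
The plan is to obtain Corollary~\ref{Cor} as a straightforward specialization of Theorem~\ref{thm9} at the point $z=q$. Since the corollary only concerns the range $0\le j\le r-1$, the Kronecker term $\delta_{j,-1}$ appearing in Theorem~\ref{thm9} is identically zero, and the prefactor there is exactly $1/c$ in the notation of the corollary, because
$$c=1-(x_1+tx_2)-t\sum_{i=0}^{r-1}(x_{r+2-i}-x_1x_{r+1-i})$$
is precisely the denominator in the statement of Theorem~\ref{thm9}. So for $0\le j\le r-1$ that theorem already reads $\Phi^t_j(z)=\tfrac1c\sum_{i=0}^{r-1-j}\widetilde{A^{(j)}_i}B_i z^{i+1}\,{}_{r+2}\phi_{r+1}\!\left[{q^{i+1},a_1q^i,\ldots,a_{r+1}q^i \atop b_1q^i,\ldots,b_{r+1}q^i};q,\frac{b_1\cdots b_{r+1}}{q^{r+1}a_1\cdots a_{r+1}}z\right]$.

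Next I would put $z=q$. Two things happen. First, $z^{i+1}=q^{i+1}$, and since $\widetilde{B_i}=B_i q^{i+1}$ by the definition stated just before the corollary, the product $B_i z^{i+1}\big|_{z=q}=\widetilde{B_i}$. Second, the fourth argument of each ${}_{r+2}\phi_{r+1}$ becomes $\frac{b_1\cdots b_{r+1}}{q^{r+1}a_1\cdots a_{r+1}}\cdot q=\frac{b_1\cdots b_{r+1}}{q^{r}a_1\cdots a_{r+1}}$, which is exactly the argument in the definition of $v_i$; hence that basic hypergeometric function evaluates to $v_i$. Combining, $\widetilde{A^{(j)}_i}B_i z^{i+1}\,{}_{r+2}\phi_{r+1}[\cdots]\big|_{z=q}=\widetilde{A^{(j)}_i}\widetilde{B_i}v_i$, and summing over $i=0,\ldots,r-1-j$ and multiplying by $1/c$ gives the scalar form $\Phi^t_j(q)=\tfrac1c\sum_{i=0}^{r-1-j}\widetilde{A^{(j)}_i}\widetilde{B_i}v_i$.

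For the equivalent matrix statement I would simply record these $r$ identities ($j=0,1,\ldots,r-1$) simultaneously. For fixed $j$ the $i$-sum stops at $i=r-1-j$, so adopting the convention $\widetilde{A^{(j)}_i}=0$ for $i>r-1-j$ turns the array $\big(\widetilde{A^{(j)}_i}\big)_{0\le j,i\le r-1}$ into the anti-lower-triangular matrix $A$ displayed in the corollary (row $0$ carries all $r$ entries $\widetilde{A^{(0)}_0},\ldots,\widetilde{A^{(0)}_{r-1}}$, row $r-1$ carries only $\widetilde{A^{(r-1)}_0}$), and the $r$ scalar identities are precisely the rows of $(\Phi^t_0(q),\ldots,\Phi^t_{r-1}(q))^{\mathrm T}=\tfrac1c\,A\,(\widetilde{B_0}v_0,\ldots,\widetilde{B_{r-1}}v_{r-1})^{\mathrm T}$.

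There is no real obstacle here; the corollary is a pure bookkeeping specialization of Theorem~\ref{thm9}. The only points worth a sentence of care are, first, that evaluating $\Phi^t_j(z)$ at $z=q$ is legitimate — this is exactly the analytic input developed in this section, where the $t$-$q$MPL values at $z=q$ are controlled by the convergent series $v_i$ under the standing hypotheses on the $a_i,b_i$ — and second, the notational clash that the symbol $A$ in the proof of Theorem~\ref{thm9} denotes the scalar $1/c$, whereas in the corollary $A$ is the matrix; I would keep them distinct by writing $1/c$ throughout the derivation.
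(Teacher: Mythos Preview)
Your proposal is correct and follows exactly the paper's approach: the paper presents Corollary~\ref{Cor} with the one-line justification ``According to Theorem~\ref{thm9}, we obtain the following corollary,'' and your argument simply spells out that specialization at $z=q$ (noting $\delta_{j,-1}=0$, $B_iq^{i+1}=\widetilde{B_i}$, and that the hypergeometric argument becomes that of $v_i$). Your remarks on the legitimacy of evaluating at $z=q$ and on the clash of the symbol $A$ are accurate but not needed for the formal derivation.
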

\subsection{$t$-$q$MZVs as special values of $t$-$q$MPLs}
The $t$-$q$MPLs are related to $t$-$q$MZVs as follows. 
\begin{lem}\label{lem1}
For positive integers $k_1, k_2, \ls, k_l$ with $k_1 \geq 2$, we have 
$$Li^{t}_{\k; q}(q) = \sum_{a_1=2}^{k_1}\sum_{a_2=1}^{k_2} \cs \sum_{a_l=1}^{k_l}\binom{k_1-2}{a_1-2}\prod_{j=2}^{l}\binom{k_j-1}{a_j-1} (1-q)^{\sum_{i=1}^{l}(k_i-a_i)}\tq(a_1, a_2, \ls, a_l). $$
\end{lem}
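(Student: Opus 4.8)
The plan is to evaluate $Li^{t}_{\k;q}(z)$ at $z=q$ directly from the definition and to reorganise the resulting multiple sum so that each inner summand becomes a $t$-$q$MZV with a smaller index. Recall that
$$
Li^{t}_{\k;q}(z) = \sum_{\p}{}' Li_{\p;q}(z)\, t^{l-\mathrm{dep}(\p)},
$$
so it suffices to prove the analogous identity for each ordinary $q$MPL $Li_{\p;q}(q)$, then sum over the $2^{l-1}$ choices of filling the boxes by $``,"$ or $``+"$ and match the powers of $t$ on both sides against the definition of $\tq(\k)$ (which, by the paragraph preceding, runs over the three-way fillings $``,"$, $``+"$, $``-1+"$). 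First I would treat the non-star $q$MPL: setting $z=q$ in
$$
Li_{\p;q}(q)=\sum_{m_1>\cs>m_s\geq 1}\frac{q^{m_1}}{[m_1]^{p_1}\cs[m_s]^{p_s}},
$$
the extra factor $q^{m_1}$ in the numerator should be absorbed into the $q$MZV numerators via the elementary identity $q^{m}=q^{(p-1)m}\cdot q^{(2-p)m}$ together with a binomial expansion of a power of $[m]=\frac{1-q^m}{1-q}$.

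The technical heart is the following one-variable lemma, which I would isolate and prove first: for $k\geq 2$,
$$
\frac{q^{m}}{[m]^{k}}=\sum_{a=2}^{k}\binom{k-2}{a-2}(1-q)^{k-a}\,\frac{q^{(a-1)m}}{[m]^{a}},
$$
and for $k\geq 1$ (no star constraint on the leading slot),
$$
\frac{1}{[m]^{k}}=\sum_{a=1}^{k}\binom{k-1}{a-1}(1-q)^{k-a}\,\frac{q^{(a-1)m}}{[m]^{a}}.
$$
Both follow by writing $q^{(a-1)m}=(1-(1-q)[m])^{\,a-1-?}$... more precisely by expanding $q^{m}=\bigl(1-(1-q)[m]\bigr)$ is false, so instead one uses $1=q^{m}+(1-q)[m]$, i.e. $q^m = 1-(1-q)[m]$, and iterates: $q^{(a-1)m}=(1-(1-q)[m])^{a-1}$ would overshoot, so the cleanest route is induction on $k$ using $q^m/[m]=1/[m]-(1-q)$ hence $q^{(a-1)m}/[m]^a$ telescopes. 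I would carry this out by induction on $k$: the case $k=2$ reads $q^m/[m]^2 = 1/[m]^2\cdot q^m$, and the inductive step multiplies the identity for $k-1$ by $q^m/[m]=(1-(1-q)[m])/[m]$ and re-indexes, the Pascal relation $\binom{k-2}{a-2}=\binom{k-3}{a-2}+\binom{k-3}{a-3}$ producing the new binomial coefficients.

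With this lemma in hand, I would substitute it into each slot of $Li_{\p;q}(q)$ — the leading slot using the first form (since $p_1\geq 2$, and this is exactly where the surviving numerator $q^{m_1}$ sits), the remaining slots using the second form with numerator $1$ — to obtain
$$
Li_{\p;q}(q)=\sum_{a_1=2}^{p_1}\sum_{a_2=1}^{p_2}\cs\binom{p_1-2}{a_1-2}\prod_{j\geq 2}\binom{p_j-1}{a_j-1}(1-q)^{\sum(p_i-a_i)}\,\zeta_q(a_1,\ls,a_s)
$$
(the inequalities $m_1>\cs>m_s$ are preserved, so a genuine $q$MZV $\zeta_q$ appears). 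Then I would sum over the $``,"/``+"$ fillings defining $Li^t_{\k;q}$, combine the two kinds of boxes in $\p$ with the two ranges of the inner $a_i$-sums, and observe that a box filled with $``+"$ together with a subsequent binomial re-indexing reproduces exactly the $``-1+"$ option in the definition of $\tq$ while contributing the correct power of $t$; collecting everything and re-expressing $\zeta_q$'s in terms of $\tq$'s via the definition of $\tq$ yields the stated formula. The main obstacle I anticipate is purely combinatorial bookkeeping: matching the three-way fillings of $\tq$ (and the weight-drop exponents of $(1-q)$) against the pairing of a two-way filling of $\p$ with the lower endpoints of the $a_i$-sums, and checking that the binomial coefficients $\binom{k_1-2}{a_1-2}$, $\binom{k_j-1}{a_j-1}$ and the exponent $\sum(k_i-a_i)$ emerge with no stray factors. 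The analytic content is entirely in the elementary lemma above; everything after that is careful re-indexing.
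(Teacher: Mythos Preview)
Your approach is correct and genuinely different from the paper's.

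First, your two ``one-variable lemmas'' are true and need no induction: since $q^m+(1-q)[m]=1$, the binomial theorem gives
\[
1=\bigl(q^m+(1-q)[m]\bigr)^{k-1}
=\sum_{a=1}^{k}\binom{k-1}{a-1}q^{(a-1)m}\bigl((1-q)[m]\bigr)^{k-a},
\]
and dividing by $[m]^{k}$ is exactly your second identity; multiplying the exponent by $k-2$ instead and dividing by $[m]^{k}/q^m$ gives the first. (Your paragraph around ``$q^m=1-(1-q)[m]$ is false'' is confused: that identity \emph{is} true and is all you need.) Plugging these in slot by slot does recover the $t=0$ statement, which is the Okuda--Takeyama formula.

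Second, the combinatorial bookkeeping you flag as the main obstacle goes through cleanly and reduces, block by block, to Vandermonde. Fix a partition of $\{1,\dots,l\}$ into consecutive blocks $B_1,\dots,B_s$ (this is what a depth-$s$ filling records on either side). For a non-leading block $B$ of size $r+1$ with $K=\sum_{i\in B}k_i$, the coefficient on the $Li^t$ side is $\binom{K-1}{c-1}$, while on the $\tq$ side you must sum over the choice of which of the $r$ internal boxes carry ``$-1+$'' and over the $b_i\in[1,k_i]$ with $\sum b_i-e=c$; the generating function $\prod_{i\in B}x(1+x)^{k_i-1}=x^{r+1}(1+x)^{K-r-1}$ turns this into
\[
\sum_{e=0}^{r}\binom{r}{e}\binom{K-r-1}{c+e-r-1}
=\sum_{f=0}^{r}\binom{r}{f}\binom{K-r-1}{c-1-f}
=\binom{K-1}{c-1}.
\]
The leading block is identical with $K-2$ and $c-2$ in place of $K-1$ and $c-1$. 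The $(1-q)$-exponents and $t$-powers match automatically, so the lemma follows.

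By contrast, the paper does \emph{not} perform this combinatorial matching. It quotes the $t=0$ case from Okuda--Takeyama and then works in the Hoffman-type algebra $\mathfrak{H}_{\hbar,t}$: writing the binomial transform as $\mathcal{L}_{\hbar}=L_x f_\hbar L_x^{-1}$ (where $f_\hbar:x\mapsto x+\hbar,\ y\mapsto y$) and the $t$-interpolation as $S^t=R_y\gamma^t R_y^{-1}$ (where $\gamma^t:y\mapsto tx+y$), the whole lemma becomes the operator identity $\mathcal{L}_{\hbar}S^t=S_{\hbar}^t\mathcal{L}_{\hbar}$, which is checked on generators by elementary commutator manipulations. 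Your route is more elementary and entirely self-contained (no algebraic setup, no citation), at the price of an explicit Vandermonde step; the paper's route hides the combinatorics in a one-line commutation of automorphisms, which is shorter once the machinery is in place and makes the structural reason for the identity transparent.
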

For its proof we here introduce the algebraic setup of $t$-$q$MZVs (see \cite{W} for details). 
Let $\h$ be a formal variable.  Denote by $\Hht = \Q[\h, t] \langle x,y \rangle$ the non-commutative polynomial algebra over $\Q[\h, t]$ in two indeterminates $x$ and $y$, 
and by $\Hht^{1}$ and $\Hht^{0}$ its subalgebras $\Q[\h, t]+ \Hht y$ and $\Q[\h, t]+ x\Hht y$, respectively. 
Put $z_j = x^{j-1}y\ (j \geq 1)$.  We define the weight and the depth of a word $u=z_{k_1} z_{k_2} \cdots z_{k_l}$ by ${\rm wt}(u)=k_1+k_2+\cdots+k_l$ and ${\rm dep}(u)=l$, respectively. 
Define the $\Q[\h, t]$-linear map $\widehat{Z}_q^t : \Hht^{0} \longrightarrow \Q[\h, t][[q]]$ by $\widehat{Z}_q^t(1) = 1$ and 
$$\widehat{Z}_q^t(z_{k_1} z_{k_2} \cs z_{k_l}) = \tq(k_1, k_2, \ls, k_l) \ \ \ (k_1 \geq 2). $$
We also define the substitution map  
$f : \Q[\h, t][[q]] \longrightarrow \Q[t][[q]]$ by $f : \h \longmapsto 1-q$ and set 
$$Z_q^t = f \circ \widehat{Z}_q^t. $$  
%
We let  
$S^t= R_y \gamma^tR_y^{-1}, $
where $\gamma^t$ denotes the automorphism on $\Hht$ characterized by 
$\gamma^t(x) = x,\ \gamma^t(y) = tx + y$   
and the maps $R_y$ and $L_x$ are $\Q$-linear maps, called right-multiplication of $y$ and left--multiplication of $x$, defined respectively by $R_y(w) = wy$ and $L_x(w) = xw$ for any $w \in \Hht$. 
Also we put
$S_\h^t=R_y \gamma_\h^t R_y^{-1}$,  
where $\gamma_\h^t$ denotes the automorphism on $\Hht$ characterized by 
$\gamma_\h^t(x) = x,\ \gamma_\h^t(y) = tx + y + \h t$,   
and $z_{\k} : = z_{k_1}z_{k_2} \cs z_{k_l}$ for $\k = (k_1, \ls, k_l)$. 

\begin{proof}[Proof of Lemma \ref{lem1}] 
This identity holds when $t=0$ according to (7) in Okuda-Takeyama\cite{OT}. 
Let $\mathcal{L}_{\h}$ be the $\Q[t]$-linear map given by 
$$\mathcal{L}_{\h}(z_{k_1}z_{k_2} \cs z_{k_l}):=\sum_{a_1=2}^{k_1}\sum_{a_2=1}^{k_2} \cs \sum_{a_l=1}^{k_l}\binom{k_1-2}{a_1-2}\prod_{j=2}^{l}\binom{k_j-1}{a_j-1} \h^{\sum_{i=1}^{l}(k_i-a_i)}z_{a_1}z_{a_2} \cs z_{a_l}. $$
Let $f_{\h}$ denote the automorphism on $\Hht$ characterized by 
$f_\h(x) = x + \h,\ f_\h(y) = y.$
Then we have $\mathcal{L}_{\h}=L_x f_\h L_x^{-1}$.  
By definition of $S_\h^t$, we find that  
\begin{align*}
Z^t_q =  Z_q^0 S_\h^t, 
\end{align*}
where $Z_q^0(w) : = Z_q^t  (w) |_{t=0}$ ($w \in \Hht^0$).
Hence we have 
\begin{align*}
& Z_q^0 S_\h^t \mathcal{L}_{\h}(z_{k_1}z_{k_2} \cs z_{k_l}) \\
 & = Z_q^t \mathcal{L}_{\h}(z_{k_1}z_{k_2} \cs z_{k_l}) \\
 & = \sum_{a_1=2}^{k_1}\sum_{a_2=1}^{k_2} \cs \sum_{a_l=1}^{k_l}\binom{k_1-2}{a_1-2}\prod_{j=2}^{l}\binom{k_j-1}{a_j-1} (1-q)^{\sum_{i=1}^{l}(k_i-a_i)}\tq(a_1, a_2, \ls, a_l). 
\end{align*}
We also have 
\begin{align*}
\mathcal{L}_{\h} S^t (z_{k_1}z_{k_2} \cs z_{k_l}) 
& = \mathcal{L}_{\h} (x^{k_1-1}(tx+y) \cs x^{k_{l-1}-1}(tx+y) x^{k_l-1}y) \\
& = \mathcal{L}_{\h} \left( {\sum_{{\p}}}' t^{l - \rm{dep}({\p})} z_\p \right)
= {\sum_{{\p}}}' t^{l - \rm{dep}({\p})} \mathcal{L}_{\h} (z_\p)\\
& \stackrel{Z_q^0}{\mapsto} {\sum_{{\p}}}' t^{l - \rm{dep}({\p})} Li_{\p;q}(q) \qquad (\text{by using Lemma \ref{lem1} when } t=0)\\
&= Li_{\k;q}^t(q).
\end{align*}
Moreover we have
\begin{align*}
\mathcal{L}_{\h} S^t & = L_x f_\h L_x^{-1} R_y \gamma^t R_y^{-1} \\
                                  & = L_x f_\h R_y L_x^{-1} \gamma^t R_y^{-1} \qquad (\text{since } [L_x^{-1}, R_y] = 0)\\
                                  & = L_x R_y f_\h \gamma^t L_x^{-1} R_y^{-1} \qquad (\text{since } [f_\h, R_y] = 0, [L_x^{-1}, \gamma^t] = 0)\\
                                  & = R_y L_x \gamma_\h^t f_\h R_y^{-1} L_x^{-1} \qquad (\text{since } [L_x, R_y] = 0, f_\h \gamma^t = \gamma_\h^t f_\h)\\
                                  & = R_y \gamma_\h^t L_x R_y^{-1} f_\h L_x^{-1} \qquad (\text{since } [f_\h, R_y^{-1}] = 0, [L_x, \gamma_\h^t] = 0)\\
                                  & = R_y \gamma_\h^t R_y^{-1} L_x f_\h L_x^{-1} \qquad (\text{since }  [L_x, R_y^{-1}] = 0)\\
                                  & = S_\h^t \mathcal{L}_{\h},                                    
\end{align*}
and hence the conclusion. 
\end{proof}
\section{Proof of main theorem}\label{SumtqMZV}
Combining the following four propositions (Propositions \ref{lem10}, \ref{Prop}, \ref{propAj}, \ref{lem13}), 
we obtain our main theorem. 
\subsection{Relation between $\Psi^t_j$ and $\Phi^t_0(q)$}
For any non-negative integers $k, l, h_1, \ls, h_r$ $(r \in \N)$ and $j=0, 1, \ls, r-1$,  
we set 
\begin{align*}
\xi^t_j&(k, l, h_1, \ls, h_r) = \sum_{\k \in I_j(k, l, h_1, \ls, h_r)} \tq(\k), \\
\Psi^t_j& =\Psi^t_j(u_1, u_2, \ls, u_{r+2}) \\
              & = \sum_{k, l, h_1, \ls, h_r \geq 0} \xi^t_j(k, l, h_1, \ls, h_r)
                     u_1^{k-l-\sum_{i=1}^{r}h_i} u_2^{l-h_1} u_3^{h_1 - h_2} \cs u_{r+1}^{h_{r-1} - h_{r}} u_{r+2}^{h_r}.  
\end{align*}
We set 
$$X_p = \sum_{i=p}^{r} { i-1 \choose p-1} ((1-q)x_1)^{i-p} \f{x_1^{r+1-i} x_{i+1} - x_{r+2}}{x_{r+2}} + (1-(1-q)x_1)^{-p},\ p=1, \ls, r+1, $$
and 
$$Y_p = \sum_{i=p}^{r} { i-2 \choose p-2} ((1-q)x_1)^{i-p} \f{x_1^{r+1-i} x_{i+1} - x_{r+2}}{x_{r+2}} + (1-(1-q)x_1)^{1-p},\ p=2, \ls, r, $$
where $x_i$'s are given by \eqref{x1}, \eqref{xj}. 
\begin{prop}\label{lem10}
We have 
$$\Psi^t_{0} = \f{\Phi^t_0(q)}{1-(1-q)x_1} + \underline{X} M_1 
\begin{pmatrix}
\Phi^t_0(q) - \Phi^t_1(q)\\
\Phi^t_0(q) - \Phi^t_2(q)\\
\vdots\\
\Phi^t_0(q) - \Phi^t_{r-1}(q)
\end{pmatrix}, $$
where 
$$\underline{X} = \left( X_2-\f{Y_2}{1-(1-q)x_1}, \ls, X_r-\f{Y_r}{1-(1-q)x_1} \right)$$
and
$$M_1=x_{r+2} 
\begin{pmatrix} \f{1}{x_1^{r-1}} & & & \\ & \f{1}{x_1^{r-2}} & & \\ & & \ddots & \\ & & & \f{1}{x_1} \end{pmatrix}
T 
\begin{pmatrix} \f{1}{x_3} & & & \\ & \f{1}{x_4} & & \\ & & \ddots & \\ & & & \f{1}{x_{r+1}} \end{pmatrix}
\begin{pmatrix} 1 & & & \\ -1&1 & & \\ & \ddots& \ddots & \\ & &-1& 1 \end{pmatrix} $$
with the lower triangular matrix $T=(t_{ij})_{(r-1) \times (r-1)}$ defined by 
\begin{align*}
t_{ij}= 
\left\{ \begin{array}{ll} {i-1 \choose j-1} (q-1)^{i-j}, & i \geq j, \\
 0, & i < j. \end{array} \right. \end{align*}
\end{prop}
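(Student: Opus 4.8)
The plan is to expand both $\Psi^t_0$ and the $\Phi^t_j(q)$ as sums over admissible multi-indices, to use Lemma~\ref{lem1} to rewrite the polylogarithm side, and then to recognise $M_1$ as the operator that inverts the binomial weights produced by Lemma~\ref{lem1}. For an integer $m\ge 1$ write $E(m)=x_{m+1}$ if $1\le m\le r$ and $E(m)=x_1^{m-1-r}x_{r+2}$ if $m\ge r+1$, and let $e(m)$ be the same expression with each $x_i$ replaced by $u_i$. A direct check shows that the monomial attached in $\Phi^t_j(z)$ to a word $\k=(k_1,\ldots,k_l)$ equals $\prod_p E(k_p)$, and the monomial attached to $\k$ in $\Psi^t_0$ equals $\prod_p e(k_p)$. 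Since among all words of given weight, depth and heights the set $I_j(k,l,h_1,\ldots,h_r)$ is cut out by the single condition $k_1\ge j+2$, summation over $(k,l,h_1,\ldots,h_r)$ gives, for $0\le j\le r-1$,
\[\Phi^t_j(q)=\sum_{\k=(k_1,\ldots,k_l),\ k_1\ge j+2}Li^t_{\k;q}(q)\prod_{p}E(k_p),\qquad \Psi^t_0=\sum_{\k\ \text{admissible}}\tq(\k)\prod_p e(k_p).\]
Feeding Lemma~\ref{lem1} into the first identity and interchanging the (coefficientwise finite) summations yields $\Phi^t_j(q)=\sum_{\mathbf a}\tq(\mathbf a)\,F^{(j)}(a_1)\prod_{p\ge 2}F(a_p)$, the sum over admissible $\mathbf a=(a_1,\ldots,a_l)$, with $F(a)=\sum_{k\ge a}(1-q)^{k-a}\binom{k-1}{a-1}E(k)$ and $F^{(j)}(a)=\sum_{k\ge\max(a,j+2)}(1-q)^{k-a}\binom{k-2}{a-2}E(k)$; in particular $F^{(0)}(a)$ is the full sum over $k\ge a$.

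Next I would record what the transformation \eqref{x1}, \eqref{xj} says about these sums. Expanding the binomial series, the equalities $F(a)=e(a)$ for all $a\ge 1$ are exactly \eqref{x1}, \eqref{xj} (the $t$-independent change of variables, as in the case $t=0$ of \cite{Li}); hence $\prod_{p\ge 2}F(a_p)=\prod_{p\ge 2}e(a_p)$, so $\Phi^t_j(q)$ and $\Psi^t_0$ differ only in the first-slot factor. Likewise one gets $F^{(0)}(a)=\{1-(1-q)x_1\}\,e(a)$ and $F^{(j)}(a)=F^{(0)}(a)$ for all $j$ when $a\ge r+1$, while for $2\le a\le r$ one gets $F^{(0)}(a)=x_{r+2}x_1^{a-1-r}Y_a$ and $e(a)=x_{r+2}x_1^{a-1-r}X_a$ (the latter being \eqref{xj} solved for $u_{a+1}$, the transformation being a formal involution after replacing $1-q$ by $q-1$). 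Therefore $e(a)-F^{(0)}(a)/\{1-(1-q)x_1\}$ equals $x_{r+2}x_1^{a-1-r}\bigl(X_a-Y_a/\{1-(1-q)x_1\}\bigr)$ for $2\le a\le r$ and vanishes for $a\ge r+1$.

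The combinatorial core is to apply $M_1$ to the column vector $\bigl(\Phi^t_0(q)-\Phi^t_j(q)\bigr)_{j=1}^{r-1}$. Its lower-bidiagonal factor telescopes this into $\bigl(\Phi^t_{i-1}(q)-\Phi^t_i(q)\bigr)_i$, and from the previous paragraph $\Phi^t_{i-1}(q)-\Phi^t_i(q)=\sum_{2\le a_1\le i+1}\tq(\mathbf a)(1-q)^{i+1-a_1}\binom{i-1}{a_1-2}x_{i+2}\prod_{p\ge 2}e(a_p)$. After the diagonal factor $\mathrm{diag}(x_3^{-1},\ldots,x_{r+1}^{-1})$ and then $T=\bigl(\binom{i-1}{j-1}(q-1)^{i-j}\bigr)$, each $\mathbf a$ contributes the inner sum $\sum_j\binom{i-1}{j-1}(q-1)^{i-j}(1-q)^{j+1-a_1}\binom{j-1}{a_1-2}$, which by the subset-of-a-subset identity and $\sum_{n}(-1)^n\binom{N}{n}=\delta_{N,0}$ collapses to $\delta_{i+1,a_1}$; thus the $i$-th entry of the resulting vector is $\sum_{a_1=i+1}\tq(\mathbf a)\prod_{p\ge 2}e(a_p)$. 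Applying $x_{r+2}\,\mathrm{diag}(x_1^{-(r-1)},\ldots,x_1^{-1})$ (whose $i$-th weight is $x_{r+2}x_1^{(i+1)-1-r}$) and pairing with $\underline X$, whose $i$-th entry is $X_{i+1}-Y_{i+1}/\{1-(1-q)x_1\}$, the displayed identity of the previous paragraph turns $\underline X M_1\bigl(\Phi^t_0(q)-\Phi^t_j(q)\bigr)_j$ into $\sum_{\mathbf a}\tq(\mathbf a)\bigl(e(a_1)-F^{(0)}(a_1)/\{1-(1-q)x_1\}\bigr)\prod_{p\ge 2}e(a_p)$. Adding $\Phi^t_0(q)/\{1-(1-q)x_1\}=\sum_{\mathbf a}\tq(\mathbf a)\,F^{(0)}(a_1)\{1-(1-q)x_1\}^{-1}\prod_{p\ge 2}e(a_p)$ collapses the total to $\sum_{\mathbf a}\tq(\mathbf a)\prod_p e(a_p)=\Psi^t_0$, which is the assertion.

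The main obstacle is this last step together with the three transformation identities $F(a)=e(a)$, $F^{(0)}(a)=x_{r+2}x_1^{a-1-r}Y_a$, $e(a)=x_{r+2}x_1^{a-1-r}X_a$: one must thread all the diagonal and matrix factors of $M_1$ through the computation so that the cancellation $\sum_n(-1)^n\binom{N}{n}=\delta_{N,0}$ exhibits $T$ as the exact inverse of the binomial weights coming from Lemma~\ref{lem1}, while the transformation identities reduce to elementary but lengthy manipulations of binomial series. No linear independence of the $\tq(\mathbf a)$ is needed: the argument establishes the identity coefficient-by-coefficient in $\tq(\mathbf a)$, which then gives the generating-function identity.
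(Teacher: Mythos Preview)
Your argument is correct. The initial expansion via Lemma~\ref{lem1}, yielding the factored form $\Phi^t_j(q)=\sum_{\mathbf a}\tq(\mathbf a)\,F^{(j)}(a_1)\prod_{p\ge 2}F(a_p)$, is exactly the content of Steps~1--2 of the paper's Lemma~\ref{lem3.4}; your identities $F(a)=e(a)$, $F^{(0)}(a)=x_{r+2}x_1^{a-1-r}Y_a$, $e(a)=x_{r+2}x_1^{a-1-r}X_a$ are the cases (m1), (m2), (12), (11) there together with the change of variables~\eqref{eqlem3.4}. Your telescoping and the subset-of-a-subset collapse $\sum_j\binom{i-1}{j-1}\binom{j-1}{a_1-2}(-1)^{i-j}=\delta_{i+1,a_1}$ are valid and land exactly on $\sum_{a_1=i+1}\tq(\mathbf a)\prod_{p\ge 2}e(a_p)$, after which the pairing with $\underline X$ and the addition of $\Phi^t_0(q)/\{1-(1-q)x_1\}$ give $\Psi^t_0$ as claimed.

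Where you differ from the paper is in organisation rather than content. The paper first packages the forward relation into a matrix identity $(\Phi^t_j(q))_j = WD\cdot(\Psi^t_0-\Psi^t_1,\ldots,\Psi^t_{r-1})^{\mathsf T}$ (Lemma~\ref{lem3.4}), then inverts $WD$ (Lemma~\ref{prop3.6}), obtaining all $\Psi^t_j$ simultaneously and reading off Proposition~\ref{lem10} as the first row; the binomial inversion you do by hand appears there as Lemma~\ref{lem3.5} ($T_3^{-1}=T$) inside the computation $T_2^{-1}=M_1$. Your route is more direct for the single statement at hand: it never introduces $\Psi^t_j$ for $j\ge 1$, avoids inverting an $r\times r$ matrix, and makes transparent that $T$ is there precisely to undo the Pascal weights produced by Lemma~\ref{lem1}. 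The paper's route, on the other hand, yields the full matrix relation of Lemma~\ref{prop3.6} as a byproduct, though only its first component is used downstream.
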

\noindent
To prove this proposition, we show three lemmas. 

\def\diag{\operatorname{diag}}
Set $$Z_{pj}=\sum_{i=p}^{j}\left({i-2\atop p-2}\right)((1-q)x_1)^{i-p}
\frac{x_1^{r+1-i}x_{i+1}}{x_{r+2}},\;\;2\leq p\leq j\leq r.$$ Let
$W$ and $D$ be $r\times r$ matrices defined by
$$W=\begin{pmatrix}
Y_2 & Y_3 & Y_4 & \cdots & Y_r & 1\\
Y_2-Z_{22} & Y_3 & Y_4 & \cdots & Y_r & 1 \\
Y_2-Z_{23} & Y_3-Z_{33} & Y_4 & \cdots & Y_r & 1 \\
\vdots & \vdots & \vdots & \ddots & \vdots & \vdots\\
Y_2-Z_{2,r-1} & Y_3-Z_{3,r-1} & Y_4-Z_{4,r-1} & \cdots & Y_r & 1\\
Y_2-Z_{2r} & Y_3-Z_{3r} & Y_4-Z_{4r} & \cdots & Y_r-Z_{rr} & 1
\end{pmatrix}$$
and
$$D=\diag(X_2^{-1},X_3^{-1},\ldots,X_r^{-1},1-(1-q)x_1).$$
\begin{lem}\label{lem3.4}
We have 
$$\begin{pmatrix}
\Phi^t_{0}(x_1,x_2,\ldots,x_{r+2};q)\\
\Phi^t_{1}(x_1,x_2,\ldots,x_{r+2};q)\\
\vdots\\
\Phi^t_{r-1}(x_1,x_2,\ldots,x_{r+2};q)
\end{pmatrix}=WD
\begin{pmatrix}
\Psi^t_{0}(u_1,u_2,\ldots,u_{r+2})-\Psi^t_{1}(u_1,u_2,\ldots,u_{r+2})\\
\Psi^t_{1}(u_1,u_2,\ldots,u_{r+2})-\Psi^t_{2}(u_1,u_2,\ldots,u_{r+2})\\
\vdots\\
\Psi^t_{r-2}(u_1,u_2,\ldots,u_{r+2})-\Psi^t_{r-1}(u_1,u_2,\ldots,u_{r+2})\\
\Psi^t_{r-1}(u_1,u_2,\ldots,u_{r+2})
\end{pmatrix},$$ where, for $j=2,\ldots,r+2$, 
\begin{align}\label{eqlem3.4}
\left\{
\begin{array}{l}
u_1=\frac{x_1}{1-(1-q)x_1},\\
u_j=\frac{x_{r+2}}{x_1^{r+2-j}}X_{j-1} \\
\hspace{1.2em}=\frac{1}{x_1^{r+2-j}}\left\{\sum\limits_{i=j-1}^{r}\left({i-1\atop j-2}\right)((1-q)x_1)^{i-j+1}
\left(x_1^{r+1-i}x_{i+1}-x_{r+2}\right)
+\frac{x_{r+2}}{\left(1-(1-q)x_1\right)^{j-1}}\right\}. 
\end{array}\right.\end{align}
\end{lem}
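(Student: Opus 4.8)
The plan is to expand $\Phi^t_0(q),\ldots,\Phi^t_{r-1}(q)$ directly from their definition, rewrite every special value $Li^t_{\k;q}(q)$ as a $\Q[t]$-linear combination of $t$-$q$MZVs by means of Lemma~\ref{lem1}, and then recognise the resulting generating functions as the asserted combinations of $\Psi^t_0,\ldots,\Psi^t_{r-1}$ once the birational substitution \eqref{eqlem3.4} is made.

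First I would observe that the monomial weights occurring in $\Phi^t_j$ and in $\Psi^t_i$ are multiplicative over the entries of an index. Set $w(k)=x_{k+1}$ for $1\le k\le r$ and $w(k)=x_1^{\,k-1-r}x_{r+2}$ for $k\ge r+1$, and let $\omega(k)$ be obtained from $w(k)$ by replacing every $x_i$ by $u_i$. Using $\sum_{i=1}^{r}i\text{-ht}(\k)=\sum_{m}\min(k_m-1,r)$ together with $i\text{-ht}(\k)-(i+1)\text{-ht}(\k)=\#\{m:k_m=i+1\}$, one checks that the monomial attached to $\k=(k_1,\ldots,k_l)$ in $\Phi^t_j(q)$ is $\prod_{m=1}^{l}w(k_m)$ and the one attached to $\k$ in $\Psi^t_i$ is $\prod_{m=1}^{l}\omega(k_m)$. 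Combining $G^t_j(k,l,h_1,\ldots,h_r;q)=\sum_{\k\in I_j}Li^t_{\k;q}(q)$ with Lemma~\ref{lem1}, this factorisation, and an interchange of the order of summation (legitimate because the coefficient of any fixed monomial in $x_1,\ldots,x_{r+2}$ is a finite sum), I obtain
\[
\Phi^t_j(q)=\sum_{\substack{\k'=(a_1,\ldots,a_l)\\ \text{admissible}}}\tq(\k')\,F^{(j)}_1(a_1)\prod_{m=2}^{l}F(a_m),
\]
with $F(a)=\sum_{k\ge a}\binom{k-1}{a-1}(1-q)^{k-a}w(k)$ and $F^{(j)}_1(a)=\sum_{k\ge\max(a,\,j+2)}\binom{k-2}{a-2}(1-q)^{k-a}w(k)$.

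The core of the argument is the evaluation of these one-variable sums. Splitting each of them into its finite part $a\le k\le r$ and its geometric tail $k\ge r+1$, resumming the tail via $(1-y)^{-p}=\sum_{n\ge0}\binom{n+p-1}{p-1}y^{n}$ with $y=(1-q)x_1$, and then substituting \eqref{eqlem3.4}, I would prove: (a) $F(a)=\omega(a)$ for every $a\ge1$, so that $\prod_{m=2}^{l}F(a_m)$ is exactly the $u$-monomial of $\k'$ with the leading weight $\omega(a_1)$ divided out; (b) $F^{(j)}_1(a)/\omega(a)=1-(1-q)x_1$ for all $a\ge r+1$, independently of $j$; and (c) for $2\le a\le r$, $F^{(j)}_1(a)/\omega(a)$ equals $Y_a/X_a$ when $j\le a-2$ and $(Y_a-Z_{a,\,j+1})/X_a$ when $a-1\le j\le r-1$, the gap between the two ranges being precisely the truncation $\sum_{k=a}^{j+1}\binom{k-2}{a-2}(1-q)^{k-a}x_{k+1}=x_{r+2}\,x_1^{\,a-r-1}Z_{a,\,j+1}$. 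By the definitions of $W$ and $D$, the quantity in (b) is the common last-column entry of $WD$ and those in (c) are the entries $(WD)_{j+1,\,a-1}$ for $2\le a\le r$. It then remains to group the sum over admissible $\k'$ according to the value of the leading entry $a_1$: the indices with $a_1=i+2$ contribute $\Psi^t_i-\Psi^t_{i+1}$ for $i=0,\ldots,r-2$, while those with $a_1\ge r+1$ contribute $\Psi^t_{r-1}$; inserting the coefficients from (a)--(c) reassembles $\Phi^t_j(q)$ as the $(j+1)$-st entry of $WD$ applied to $(\Psi^t_0-\Psi^t_1,\ldots,\Psi^t_{r-2}-\Psi^t_{r-1},\Psi^t_{r-1})^{\mathrm T}$, which is the claim.

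The main obstacle will be the bookkeeping inside step (c): matching the geometric-series expansions of $F(a)$ and $F^{(j)}_1(a)$ --- together with the split between the finite ``$k\le r$'' part and the ``$k\ge r+1$'' tail --- against the rather intricate closed forms defining $X_p$, $Y_p$ and $Z_{pj}$, and keeping the several boundary cases ($a\le r$ versus $a\ge r+1$, and $j\le a-2$ versus $j\ge a-1$) correctly aligned with the block structure of $W$. A secondary, purely formal point is the ambient ring: since \eqref{eqlem3.4} is a birational rather than polynomial change of variables, the identity is cleanest read as an equality of rational functions of $x_1,\ldots,x_{r+2}$ over $\Q((q))[t]$, all rearrangements being carried out coefficientwise.
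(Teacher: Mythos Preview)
Your proposal is correct and follows essentially the same route as the paper's proof. The paper likewise expands $\Phi^t_j(q)$ via Lemma~\ref{lem1}, interchanges the order of summation, factorises the inner sum over $(k_1',\ldots,k_l')$ as a product $S_1^{(j)}S_2\cdots S_l$, evaluates $S_1^{(j)}$ and $S_m$ by the same case analysis you describe in (b)--(c) (obtaining $Y_p$, $Y_p-Z_{p,j+1}$, $X_p$, and the powers of $1-(1-q)x_1$), and finally splits the sum over admissible $(a_1,\ldots,a_l)$ according to the value of $a_1$ to recover the $\Psi^t_i-\Psi^t_{i+1}$. The only cosmetic difference is that the paper first extracts a global factor $x_1^{k}(x_{r+2}/x_1^{r+1})^{l}$ and works with the dimensionless quantities $\widetilde{X}_p=X_p(1-(1-q)x_1)^p$ before recognising the $u$-monomial in a separate Step~4, whereas your multiplicative weights $w(k)$ and $\omega(k)$ absorb this normalisation from the outset, so that the identity $F(a)=\omega(a)$ in your step~(a) replaces the paper's Step~4; the two computations are related by $F(a)=\frac{x_{r+2}}{x_1^{\,r+1-a}}S_m$ and $F_1^{(j)}(a)=\frac{x_{r+2}}{x_1^{\,r+1-a}}S_1^{(j)}$.
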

\begin{proof}
Let $j$ be an integer with $0\leq j\leq r-1$. 

\noindent {\bf Step 1.} By definitions and Lemma \ref{lem1}, we have
\begin{align*}
\Phi^t_{j}(q)=&\sum\limits_{k,l,h_1,\ldots,h_r\geq 0}G^t_{j}(k,l,h_1,\ldots,h_r;q)x_1^{k-l-\sum_{i=1}^rh_i}
x_2^{l-h_1}x_3^{h_1-h_2}\cdots x_{r+1}^{h_{r-1}-h_r}x_{r+2}^{h_r}\\
=&\sum\limits_{k,l,h_1,\ldots,h_r\geq 0}\sum\limits_{(k_1,\ldots,k_l)\in I_{j}(k,l,h_1,\ldots,h_r)}
\sum\limits_{a_1=2}^{k_1}\sum\limits_{a_2=1}^{k_2}\cdots\sum\limits_{a_l=1}^{k_l}
\left({k_1-2\atop a_1-2}\right)\left\{\prod_{i=2}^l\left({k_i-1\atop a_i-1}\right)\right\}\\
& \times (1-q)^{\sum\limits_{i=1}^l(k_i-a_i)}
\zeta^t_q(a_1,\ldots,a_l)x_1^{k-l-\sum_{i=1}^rh_i} x_2^{l-h_1}x_3^{h_1-h_2}\cdots x_{r+1}^{h_{r-1}-h_r}x_{r+2}^{h_r}.
\end{align*}
By changing the order of the sums, we get
\begin{align*}
\Phi^t_{j}(q)=&\sum\limits_{k,l,h_1,\ldots,h_r\geq 0}\sum\limits_{(a_1,\ldots,a_l)\in I_{0}(k,l,h_1,\ldots,h_r)}
\;\sum_{k_p'\geq a_p \atop{ p=1,\ldots, l \atop{k_1'\geq j+2}}}
\left({k_1'-2\atop a_1-2}\right)\left\{\prod_{i=2}^l \left({k_i'-1\atop a_i-1}\right)\right\}\\
&\times (1-q)^{\sum_{i=1}^l (k_i'-a_i)}x_1^{k'-l-\sum_{i=1}^r h_i'} x_2^{l-h_1'}x_3^{h_1'-h_2'}
\cdots x_{r+1}^{h_{r-1}'-h_r'}x_{r+2}^{h_r'}\zeta^t_q(a_1,\ldots,a_l),
\end{align*}
where $k',h_1',\ldots,h_r'$ are non-negative integers such that $(k_1',\ldots,k_l')\in I_{j}(k',l,$ $h_1',\ldots,h_r')$.
Since
$$h_1'+\cdots+h_r'=rl-r(l-h_1')-(r-1)(h_1'-h_2')-\cdots-(h_{r-1}'-h_r')$$
and
$$h_r'=l-(l-h_1')-(h_1'-h_2')-\cdots-(h_{r-1}'-h_r'),$$
we get
\begin{align*}
&x_1^{k'-l-\sum_{i=1}^r h_i'} x_2^{l-h_1'}x_3^{h_1'-h_2'}\cdots x_{r+1}^{h_{r-1}'-h_r'}x_{r+2}^{h_r'}\\
&=x_1^{k'-(r+1)l}x_{r+2}^l \left(\frac{x_1^rx_2}{x_{r+2}}\right)^{l-h_1'}
\left(\frac{x_1^{r-1}x_3}{x_{r+2}}\right)^{h_1'-h_2'}
\cdots\left(\frac{x_1x_{r+1}}{x_{r+2}}\right)^{h_{r-1}'-h_r'}.
\end{align*}
By using the facts that $l-h_1'=\sum_{i=1}^l \delta_{k_i',1}$, $h_{j-1}'-h_j'=\sum_{i=1}^l\delta_{k_i',j}$, $j=2,\ldots,r$
and $k'=k_1'+\cdots+k_l'$, we have
\begin{align*}
&x_1^{k'-l-\sum_{i=1}^rh_i'} x_2^{l-h_1'}x_3^{h_1'-h_2'}\cdots x_{r+1}^{h_{r-1}'-h_r'}x_{r+2}^{h_r'}\\
&=\left(\frac{x_{r+2}}{x_1^{r+1}}\right)^l \prod\limits_{i=1}^lx_1^{k_j'}\left(\frac{x_1^rx_2}{x_{r+2}}\right)^{\delta_{k_i',1}}
\left(\frac{x_1^{r-1}x_3}{x_{r+2}}\right)^{\delta_{k_i',2}}
\cdots\left(\frac{x_1x_{r+1}}{x_{r+2}}\right)^{\delta_{k_i',r}}.
\end{align*}
Hence
\begin{align*}
\Phi^t_{j}(q)=&\sum\limits_{k,l,h_1,\ldots,h_r\geq 0}\sum\limits_{(a_1,\ldots,a_l)\in I_{0}(k,l,h_1,\ldots,h_r)}
\;\sum_{k_p'\geq a_p \atop{ p=1,\ldots,l \atop{k_1'\geq j+2}}}\left({k_1'-2\atop a_1-2}\right)\\
&\times \left\{\prod_{i=2}^l\left({k_i'-1\atop a_i-1}\right)\right\}(1-q)^{\sum_{i=1}^l(k_i'-a_i)}\left(\frac{x_{r+2}}{x_1^{r+1}}\right)^l\\
&\times \prod\limits_{i=1}^l x_1^{k_i'}\left(\frac{x_1^rx_2}{x_{r+2}}\right)^{\delta_{k_i',1}}
\left(\frac{x_1^{r-1}x_3}{x_{r+2}}\right)^{\delta_{k_i',2}}
\cdots\left(\frac{x_1x_{r+1}}{x_{r+2}}\right)^{\delta_{k_i',r}}\zeta^t_q(a_1,\ldots,a_l)\\
=&\sum\limits_{k,l,h_1,\ldots,h_r\geq 0}\sum\limits_{(a_1,\ldots,a_l)\in I_{0}(k,l,h_1,\ldots,h_r)}
\;\sum_{k_p'\geq a_p \atop{ p=1,\ldots,l \atop{k_1'\geq j+2}}}\left({k_1'-2\atop a_1-2}\right)\\
&\times \left\{\prod_{i=2}^l \left({k_i'-1\atop a_i-1}\right)\right\}((1-q)x_1)^{\sum_{i=1}^l (k_i'-a_i)} \prod\limits_{i=1}^l
\left(\frac{x_1^rx_2}{x_{r+2}}\right)^{\delta_{k_i',1}}
\left(\frac{x_1^{r-1}x_3}{x_{r+2}}\right)^{\delta_{k_i',2}}\\
&\times \cdots \times \left(\frac{x_1x_{r+1}}{x_{r+2}}\right)^{\delta_{k_i',r}}x_1^k\left(\frac{x_{r+2}}{x_1^{r+1}}\right)^l\zeta^t_q(a_1,\ldots,a_l).
\end{align*}

\noindent {\bf Step 2.} For any fixed $(a_1,\ldots,a_l)\in I_{0}(k,l,h_1,\ldots,h_r)$, we set
\begin{align*}
S^{(j)}(a_1,\ldots,a_l)& =\sum_{k_p'\geq a_p \atop{ p=1,\ldots,l \atop{k_1'\geq j+2}}}\left({k_1'-2\atop a_1-2}\right)
\left\{\prod_{i=2}^l \left({k_i'-1\atop a_i-1}\right)\right\}((1-q)x_1)^{\sum_{i=1}^l (k_i'-a_i)} \\
& \times \prod\limits_{i=1}^l
\left(\frac{x_1^rx_2}{x_{r+2}}\right)^{\delta_{k_i',1}}
\left(\frac{x_1^{r-1}x_3}{x_{r+2}}\right)^{\delta_{k_i',2}} \cdots \left(\frac{x_1x_{r+1}}{x_{r+2}}\right)^{\delta_{k_i',r}}.
\end{align*}
Then $S^{(j)}(a_1,\ldots,a_l)=S_1^{(j)}S_2\cdots S_l$, where
$$S_1^{(j)}=\sum_{i\geq a_1,i\geq j+2}\left({i-2\atop a_1-2}\right)((1-q)x_1)^{i-a_1}
\left(\frac{x_1^rx_2}{x_{r+2}}\right)^{\delta_{i,1}}
\left(\frac{x_1^{r-1}x_3}{x_{r+2}}\right)^{\delta_{i,2}}
\cdots\left(\frac{x_1x_{r+1}}{x_{r+2}}\right)^{\delta_{i,r}}$$
and
$$S_m=\sum_{i\geq a_m}\left({i-1\atop a_m-1}\right)((1-q)x_1)^{i-a_m}
\left(\frac{x_1^rx_2}{x_{r+2}}\right)^{\delta_{i,1}}
\left(\frac{x_1^{r-1}x_3}{x_{r+2}}\right)^{\delta_{i,2}}
\cdots\left(\frac{x_1x_{r+1}}{x_{r+2}}\right)^{\delta_{i,r}}$$ for
$m=2,\ldots,l$.

To compute $S_1^{(j)}$, we consider the following three cases.

\noindent  {\bf Case (11):}  $a_1\geq r+1$.

In this case, we have
$$S_1^{(j)}=\sum_{i\geq a_1}\left({i-2\atop a_1-2}\right)((1-q)x_1)^{i-a_1}=(1-(1-q)x_1)^{1-a_1}.$$

\noindent {\bf Case (12):} $a_1=p$ for some $p$ with $j+2\leq p\leq r$.

In this case, we have
\begin{align*}
S_1^{(j)}&=\sum_{i\geq p}\left({i-2\atop p-2}\right)((1-q)x_1)^{i-p}
\left(\frac{x_1^rx_2}{x_{r+2}}\right)^{\delta_{i,1}}
\left(\frac{x_1^{r-1}x_3}{x_{r+2}}\right)^{\delta_{i,2}}
\cdots\left(\frac{x_1x_{r+1}}{x_{r+2}}\right)^{\delta_{i,r}}\\
&=\sum_{i=p}^r\left({i-2\atop p-2}\right)((1-q)x_1)^{i-p}
\frac{x_1^{r+1-i}x_{i+1}}{x_{r+2}}+\sum_{i\geq r+1}\left({i-2\atop p-2}\right)((1-q)x_1)^{i-p} \\
&=\sum_{i=p}^r\left({i-2\atop p-2}\right)((1-q)x_1)^{i-p}
\frac{x_1^{r+1-i}x_{i+1}-x_{r+2}}{x_{r+2}}+(1-(1-q)x_1)^{1-p}\\
&=Y_p.
\end{align*}

\noindent {\bf Case (13):} $a_1=p$ for some $p$ with $2\leq p\leq j+1$.

In this case,
\begin{align*}
S_1^{(j)}&=\sum_{i\geq j+2}\left({i-2\atop
p-2}\right)((1-q)x_1)^{i-p}
\left(\frac{x_1^rx_2}{x_{r+2}}\right)^{\delta_{i,1}}
\left(\frac{x_1^{r-1}x_3}{x_{r+2}}\right)^{\delta_{i,2}}
\cdots\left(\frac{x_1x_{r+1}}{x_{r+2}}\right)^{\delta_{i,r}}\\
&=\sum_{i=j+2}^r\left({i-2\atop p-2}\right)((1-q)x_1)^{i-p}
\frac{x_1^{r+1-i}x_{i+1}}{x_{r+2}}+\sum_{i\geq r+1}\left({i-2\atop
p-2}\right)((1-q)x_1)^{i-p}\\
&=Y_p-Z_{p,j+1}.
\end{align*}

Now 
we compute $S_m$ for $2\leq m\leq l$. We have two
cases.

\noindent {\bf Case (m1):} $a_m=p$ for some $1\leq p\leq r$.

Then we have
\begin{align*}
S_m&=\sum_{i=p}^r\left({i-1\atop p-1}\right)((1-q)x_1)^{i-p}
\frac{x_1^{r+1-i}x_{i+1}}{x_{r+2}}+\sum_{i\geq r+1}\left({i-1\atop p-1}\right)((1-q)x_1)^{i-p}\\
&=\sum_{i=p}^r\left({i-1\atop p-1}\right)((1-q)x_1)^{i-p}
\frac{x_1^{r+1-i}x_{i+1}-x_{r+2}}{x_{r+2}}+(1-(1-q)x_1)^{-p}\\
&=X_p.
\end{align*}

\noindent {\bf Case (m2):} $a_m\geq r+1$.

Then
$$S_m=\sum_{i\geq a_m}\left({i-1\atop a_m-1}\right)((1-q)x_1)^{i-a_m}=(1-(1-q)x_1)^{-a_m}.$$

Setting $\widetilde{X}_p=X_p(1-(1-q)x_1)^p$ for $p=1,\ldots,r$, we obtain $S^{(j)}(a_1,\ldots,a_l)$. There are three cases.

\noindent {\bf Case (i):} $a_1\geq r+1$.

In this case,
\begin{align*}
S^{(j)}(a_1,\ldots,a_l)=&(1-(1-q)x_1)^{1-a_1}X_1^{l-h_1}X_2^{h_1-h_2}\cdots X_r^{h_{r-1}-h_r}\\
&\times \frac{(1-(1-q)x_1)^{-\sum_{m=2}^l a_m}}{(1-(1-q)x_1)^{-\{(l-h_1)+2(h_1-h_2)+\cdots+r(h_{r-1}-h_r)\}}}\\
=&\frac{1-(1-q)x_1}{(1-(1-q)x_1)^k}\left\{X_1(1-(1-q)x_1)\right\}^{l-h_1}\\
&\times \left\{X_2(1-(1-q)x_1)^2\right\}^{h_1-h_2}\cdots
\left\{X_r(1-(1-q)x_1)^r\right\}^{h_{r-1}-h_r}\\
 =&\frac{1-(1-q)x_1}{(1-(1-q)x_1)^k}\widetilde{X}_1^{l-h_1}
\widetilde{X}_2^{h_1-h_2}\cdots \widetilde{X}_r^{h_{r-1}-h_r}.
\end{align*}

\noindent {\bf Case (ii):} $a_1=p$ for some $p$ with $j+2\leq p\leq r$.

Then
\begin{align*}
&S^{(j)}(a_1,\ldots,a_l)\\
& =Y_p X_1^{l-h_1}X_2^{h_1-h_2}\cdots 
X_{p-1}^{h_{p-2}-h_{p-1}} X_p^{h_{p-1}-h_{p}-1} X_{p+1}^{h_{p}-h_{p+1}}\cdots
X_r^{h_{r-1}-h_r}\\
&\times \frac{(1-(1-q)x_1)^{-\sum_{m=2}^l a_m}}{(1-(1-q)x_1)^{-\{(l-h_1)+2(h_1-h_2)+\cdots+ (p-1)(h_{p-2}-h_{p-1})+ p(h_{p-1}-h_p-1)+ (p+1)(h_{p}-h_{p+1})+ \cs +r(h_{r-1}-h_r)\}}}\\
&=\frac{Y_pX_p^{-1}}{(1-(1-q)x_1)^k}\widetilde{X}_1^{l-h_1}
\widetilde{X}_2^{h_1-h_2}\cdots \widetilde{X}_r^{h_{r-1}-h_r}.
\end{align*}

\noindent {\bf Case (iii):} $a_1=p$ for some $p$ with $2\leq p\leq j+1$.

Then
$$S^{(j)}(a_1,\ldots,a_l)=\frac{(Y_p-Z_{p,j+1})X_p^{-1}}{(1-(1-q)x_1)^k}\widetilde{X}_1^{l-h_1}
\widetilde{X}_2^{h_1-h_2}\cdots \widetilde{X}_r^{h_{r-1}-h_r}.$$

\noindent {\bf Step 3.} Notice that $I_0(k,l,h_1,\ldots,h_r)$ equates to the disjoint union
$$I_{r-1}(k,l,h_1,\ldots,h_r)\sqcup \bigsqcup_{i=0}^{r-2}\left(I_{i}(k,l,h_1,\ldots,h_r)
\setminus I_{i+1}(k,l,h_1,\ldots,h_r)\right).$$ 
Therefore
\begin{align*}
\Phi^t_{j}(q)
= & \sum\limits_{k,l,h_1,\ldots,h_r\geq
0}\left\{\sum\limits_{(a_1,\ldots,a_l)\in
I_{r-1}(k,l,h_1,\ldots,h_r)}+\sum\limits_{i=2}^{r}\sum\limits_{(a_1,\ldots,a_l)\in
I_{i-2}(k,l,h_1,\ldots,h_r) \atop {a_1=i}}\right\}\\
&\times S^{(j)}(a_1,\ldots,a_l)x_1^k\left(\frac{x_{r+2}}{x_1^{r+1}}\right)^l
\zeta^t_q(a_1,\ldots,a_l).
\end{align*}

Using the results of $S^{(j)}(a_1,\ldots,a_n)$ obtained in Step 2, we have
\begin{align*}
\Phi^t_j(q)=&\sum\limits_{k,l,h_1,\ldots,h_r\geq
0}\sum\limits_{(a_1,\ldots,a_l)\in I_{r-1}(k,l,h_1,\ldots,h_r)}
\frac{1-(1-q)x_1}{(1-(1-q)x_1)^k}\widetilde{X}_1^{l-h_1}
\widetilde{X}_2^{h_1-h_2}\cdots
\widetilde{X}_r^{h_{r-1}-h_r}\\
&\times
x_1^k\left(\frac{x_{r+2}}{x_1^{r+1}}\right)^l \zeta^t_q(a_1,\ldots,a_l)
+\sum\limits_{i=2}^{j+1}\sum\limits_{k,l,h_1,\ldots,h_r\geq
0}\sum\limits_{(a_1,\ldots,a_l)\in
I_{i-2}(k,l,h_1,\ldots,h_r) \atop{a_1=i}}\\
&\times
\frac{(Y_i-Z_{i,j+1})X_i^{-1}}{(1-(1-q)x_1)^k}\widetilde{X}_1^{l-h_1}
\widetilde{X}_2^{h_1-h_2}\cdots
\widetilde{X}_r^{h_{r-1}-h_r}x_1^k\left(\frac{x_{r+2}}{x_1^{r+1}}\right)^l\zeta^t_q(a_1,\ldots,a_l)\\
&+\sum\limits_{i=j+2}^{r}\sum\limits_{k,l,h_1,\ldots,h_r\geq
0}\sum\limits_{(a_1,\ldots,a_l)\in
I_{i-2}(k,l,h_1,\ldots,h_r) \atop{a_1=i}}\frac{Y_iX_i^{-1}}{(1-(1-q)x_1)^k}\widetilde{X}_1^{l-h_1}
\widetilde{X}_2^{h_1-h_2}\\
&\times \cdots \times
\widetilde{X}_r^{h_{r-1}-h_r}x_1^k\left(\frac{x_{r+2}}{x_1^{r+1}}\right)^l\zeta^t_q(a_1,\ldots,a_l)\\
=&(1-(1-q)x_1)\sum\limits_{k,l,h_1,\ldots,h_r\geq
0}\xi^t_{r-1}(k,l,h_1,\ldots,h_r)\left(\frac{x_1}{1-(1-q)x_1}\right)^k
\left(\frac{x_{r+2}}{x_1^{r+1}}\right)^l\\
&\times \widetilde{X}_1^{l-h_1} \widetilde{X}_2^{h_1-h_2}\cdots
\widetilde{X}_r^{h_{r-1}-h_r}+\sum\limits_{i=2}^{j+1}\frac{Y_i-Z_{i,j+1}}{X_i}
\sum\limits_{k,l,h_1,\ldots,h_r\geq
0}\left\{\xi^t_{i-2}(k,l,h_1,\ldots,h_r)\right. \\
&\left. -\xi^t_{i-1}(k,l,h_1,\ldots,h_r)\right\}\left(\frac{x_1}{1-(1-q)x_1}\right)^k
\left(\frac{x_{r+2}}{x_1^{r+1}}\right)^l\widetilde{X}_1^{l-h_1}
\widetilde{X}_2^{h_1-h_2}\cdots \widetilde{X}_r^{h_{r-1}-h_r}\\
&+\sum\limits_{i=j+2}^{r}\frac{Y_i}{X_i}\sum\limits_{k,l,h_1,\ldots,h_r\geq
0}\left\{ \xi^t_{i-2}(k,l,h_1,\ldots,h_r)-\xi^t_{i-1}(k,l,h_1,\ldots,h_r)\right\}\\
&\times \left(\frac{x_1}{1-(1-q)x_1}\right)^k
\left(\frac{x_{r+2}}{x_1^{r+1}}\right)^l\widetilde{X}_1^{l-h_1}
\widetilde{X}_2^{h_1-h_2}\cdots \widetilde{X}_r^{h_{r-1}-h_r}.
\end{align*}

\noindent {\bf Step 4.} Since
$$l=(l-h_1)+(h_1-h_2)+\cdots+(h_{r-1}-h_r)+h_r$$
and
$$l+h_1+h_2+\cdots+h_r=(l-h_1)+2(h_1-h_2)+3(h_2-h_3)+
\cdots+r(h_{r-1}-h_r)+(r+1)h_r,$$
and \eqref{eqlem3.4}, we have
\begin{align*}
&\left(\frac{x_1}{1-(1-q)x_1}\right)^k
\left(\frac{x_{r+2}}{x_1^{r+1}}\right)^l \widetilde{X}_1^{l-h_1}
\widetilde{X}_2^{h_1-h_2}\cdots \widetilde{X}_r^{h_{r-1}-h_r}\\
=&u_1^{k-l-\sum_{i=1}^l h_i}
\left(\frac{u_1x_{r+2}\widetilde{X}_1}{x_1^{r+1}}\right)^{l-h_1}
\left(\frac{u_1^2x_{r+2}\widetilde{X}_2}{x_1^{r+1}}\right)^{h_1-h_2}
\cdots\left(\frac{u_1^rx_{r+2}\widetilde{X}_r}{x_1^{r+1}}\right)^{h_{r-1}-h_r}
\left(\frac{u_1^{r+1}x_{r+2}}{x_1^{r+1}}\right)^{h_r}\\
=&u_1^{k-l-\sum_{i=1}^l h_i} u_2^{l-h_1}u_{3}^{h_1-h_2}\cdots
u_{r+1}^{h_{r-1}-h_r}u_{r+2}^{h_r}.
\end{align*}
Thus
\begin{align*}
\Phi^t_{j}(q)=&(1-(1-q)x_1)\Psi^t_{r-1}(u_1,\ldots,u_{r+2})+\sum\limits_{i=2}^{j+1}\frac{Y_i-Z_{i,j+1}}{X_i}
\left\{ \Psi^t_{i-2}(u_1,\ldots,u_{r+2})\right.\\
&\left. -\Psi^t_{i-1}(u_1,\ldots,u_{r+2})\right\}
+\sum\limits_{i=j+2}^r\frac{Y_i}{X_i}
\left\{ \Psi^t_{i-2}(u_1,\ldots,u_{r+2})-\Psi^t_{i-1}(u_1,\ldots,u_{r+2})\right\}.
\end{align*}
Hence we complete the proof of Lemma \ref{lem3.4}.\end{proof}
\begin{rmk}
We find that \eqref{eqlem3.4} is equivalent to \eqref{x1} and \eqref{xj} as shown in the proof of Lemma \ref{prop3.6} below. 
\end{rmk}
\begin{lem}\label{lem3.5}
Let $T=(t_{ij})$ be the $n\times n$ lower triangular matrix defined
by
\begin{align*}
t_{ij}= 
\left\{ \begin{array}{ll} {i-1 \choose j-1} x^{i-j}, & i \geq j, \\
 0, & i < j. \end{array} \right. 
\end{align*}
then the invertible matrix of $T$ is the lower triangular matrix
$(t_{ij}')$ defined by
\begin{align*}
t_{ij}'= 
\left\{ \begin{array}{ll} {i-1 \choose j-1} (-x)^{i-j}, & i \geq j, \\
 0, &  i < j. \end{array} \right. 
 \end{align*}
\end{lem}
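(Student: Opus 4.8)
The plan is to verify directly that $TT'=I_n$ (the $n\times n$ identity matrix), where $T=(t_{ij})$ and $T'=(t'_{ij})$ are the two matrices in the statement. Since both $T$ and $T'$ are lower triangular with all diagonal entries equal to $1$, the identity $TT'=I_n$ forces $T'T=I_n$ as well, so $T'=T^{-1}$; hence it suffices to check the single product $TT'$.

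First I would fix a pair of indices $1\le k\le i\le n$ and write out the $(i,k)$-entry of the product. Because $t_{ij}=0$ for $j>i$ and $t'_{jk}=0$ for $j<k$, the summation range collapses and
\begin{align*}
(TT')_{ik}=\sum_{j=1}^{n}t_{ij}t'_{jk}
=\sum_{j=k}^{i}\binom{i-1}{j-1}x^{i-j}\binom{j-1}{k-1}(-x)^{j-k}.
\end{align*}
Next I would apply the subset-of-a-subset identity $\binom{i-1}{j-1}\binom{j-1}{k-1}=\binom{i-1}{k-1}\binom{i-k}{j-k}$, valid for $k\le j\le i$, to pull out the factor not depending on $j$, and then reindex by $m=j-k$:
\begin{align*}
(TT')_{ik}=\binom{i-1}{k-1}x^{i-k}\sum_{j=k}^{i}\binom{i-k}{j-k}(-1)^{j-k}
=\binom{i-1}{k-1}x^{i-k}\sum_{m=0}^{i-k}\binom{i-k}{m}(-1)^{m}.
\end{align*}
By the binomial theorem the last sum equals $(1-1)^{i-k}$, which is $1$ if $i=k$ and $0$ otherwise; together with $\binom{i-1}{i-1}x^{0}=1$ this yields $(TT')_{ik}=\delta_{ik}$ for all $i\ge k$. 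For $i<k$ the entry $(TT')_{ik}$ is automatically $0$ since $T$ and $T'$ are both lower triangular. Therefore $TT'=I_n$, and by the opening observation $T'=T^{-1}$, which is exactly the claim.

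The argument is entirely a routine computation; there is no genuine obstacle. The only point requiring a little care is invoking the correct combinatorial identity — the trinomial revision (subset-of-a-subset) identity — and making sure the summation limits in the product line up with the triangular support of the two matrices.
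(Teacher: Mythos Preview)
Your proof is correct and follows exactly the approach indicated in the paper, which simply states that the lemma ``is easily shown by checking that $T(t_{ij}')$ is the identity matrix.'' You have supplied the explicit computation via the trinomial revision identity and the binomial theorem that the paper leaves to the reader.
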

\begin{proof}
This is easily shown by checking that $T(t_{ij}')$ is the identity matrix. 
%
\end{proof}
\begin{lem}\label{prop3.6}
Let $M$ be a $r\times r$ matrix given by 
$$M=\begin{pmatrix}
X_2-\frac{Y_2}{1-(1-q)x_1} & X_3-\frac{Y_3}{1-(1-q)x_1} &\cdots &
X_{r-1}-\frac{Y_{r-1}}{1-(1-q)x_1} & X_r-\frac{Y_r}{1-(1-q)x_1} & \frac{1}{1-(1-q)x_1}\\
-\frac{Y_2}{1-(1-q)x_1} & X_3-\frac{Y_3}{1-(1-q)x_1} &\cdots &
X_{r-1}-\frac{Y_{r-1}}{1-(1-q)x_1} & X_r-\frac{Y_r}{1-(1-q)x_1} & \frac{1}{1-(1-q)x_1}\\
-\frac{Y_2}{1-(1-q)x_1} & -\frac{Y_3}{1-(1-q)x_1} &\cdots &
X_{r-1}-\frac{Y_{r-1}}{1-(1-q)x_1} & X_r-\frac{Y_r}{1-(1-q)x_1} & \frac{1}{1-(1-q)x_1}\\
\cdots & \cdots & \ddots & \ddots & \cdots & \cdots\\
-\frac{Y_2}{1-(1-q)x_1} & -\frac{Y_3}{1-(1-q)x_1} &\cdots &
-\frac{Y_{r-1}}{1-(1-q)x_1} & X_r-\frac{Y_r}{1-(1-q)x_1} & \frac{1}{1-(1-q)x_1}\\
-\frac{Y_2}{1-(1-q)x_1} & -\frac{Y_3}{1-(1-q)x_1} &\cdots &
-\frac{Y_{r-1}}{1-(1-q)x_1} & -\frac{Y_r}{1-(1-q)x_1} & \frac{1}{1-(1-q)x_1}\\
\end{pmatrix}. $$
Let $M_1$ 
be the $(r-1)\times (r-1)$ 
matrix defined in Proposition \ref{lem10}.   
Then we have 
$$\begin{pmatrix}
\Psi^t_0(u_1,\ldots,u_{r+2})\\
\Psi^t_1(u_1,\ldots,u_{r+2})\\
\vdots\\
\Psi^t_{r-1}(u_1,\ldots,u_{r+2})
\end{pmatrix}=M
\begin{pmatrix}
0 & M_1\\
1 & 0
\end{pmatrix}
\begin{pmatrix}
\Phi^t_0(x_1,\ldots,x_{r+2};q)\\
\Phi^t_0(x_1,\ldots,x_{r+2};q)-\Phi^t_1(x_1,\ldots,x_{r+2};q)\\
\Phi^t_0(x_1,\ldots,x_{r+2};q)-\Phi^t_2(x_1,\ldots,x_{r+2};q)\\
\vdots\\
\Phi^t_0(x_1,\ldots,x_{r+2};q)-\Phi^t_{r-1}(x_1,\ldots,x_{r+2};q)
\end{pmatrix}.$$
\end{lem}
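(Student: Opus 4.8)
The plan is to invert the linear relation of Lemma~\ref{lem3.4}, solving for the $\Psi^t_j$'s in terms of the $\Phi^t_j(q)$'s, with Lemma~\ref{lem3.5} supplying the inverse of the binomial matrix $T$ that appears once one passes to successive differences. (When $r=1$ the asserted identity is just the $j=0$ case of Lemma~\ref{lem3.4} with all sums empty, so assume $r\ge 2$ below.)

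First I would record the scalar form of Lemma~\ref{lem3.4}: for $0\le j\le r-1$,
$$\Phi^t_j(q)=(1-(1-q)x_1)\Psi^t_{r-1}+\sum_{i=2}^{j+1}\frac{Y_i-Z_{i,j+1}}{X_i}\bigl(\Psi^t_{i-2}-\Psi^t_{i-1}\bigr)+\sum_{i=j+2}^{r}\frac{Y_i}{X_i}\bigl(\Psi^t_{i-2}-\Psi^t_{i-1}\bigr),$$
singling out the $j=0$ instance $\Phi^t_0(q)=(1-(1-q)x_1)\Psi^t_{r-1}+\sum_{i=2}^{r}\frac{Y_i}{X_i}(\Psi^t_{i-2}-\Psi^t_{i-1})$ for use at the end. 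Then, for $1\le j\le r-1$, I would form $\Phi^t_{j-1}(q)-\Phi^t_j(q)$: the $\Psi^t_{r-1}$ term and the $Y_i/X_i$ tails cancel, and, since $Z_{p,j+1}-Z_{p,j}=\binom{j-1}{p-2}((1-q)x_1)^{j+1-p}\frac{x_1^{r-j}x_{j+2}}{x_{r+2}}$ (with the boundary case $Z_{j+1,j+1}$ coinciding with this expression at $p=j+1$), the difference collapses to
$$\Phi^t_{j-1}(q)-\Phi^t_j(q)=\frac{x_1^{r-j}x_{j+2}}{x_{r+2}}\sum_{m=0}^{j-1}\binom{j-1}{m}((1-q)x_1)^{j-1-m}\,\frac{\Psi^t_m-\Psi^t_{m+1}}{X_{m+2}}.$$

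Dividing by $x_{j+2}$ and collecting $x_1^{r-j}\cdot x_1^{j-1-m}=x_1^{r-1-m}$, this reads $w_j=\sum_{m=0}^{j-1}\binom{j-1}{m}(1-q)^{j-1-m}g_m$ with $w_j:=x_{j+2}^{-1}(\Phi^t_{j-1}(q)-\Phi^t_j(q))$ and $g_m:=\dfrac{x_1^{r-1-m}(\Psi^t_m-\Psi^t_{m+1})}{x_{r+2}X_{m+2}}$; this is precisely the binomial system of Lemma~\ref{lem3.5} with parameter $1-q$, so by that lemma $(g_m)=T(w_j)$, where $T$ is the matrix (parameter $q-1$) occurring in $M_1$. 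Now I would read off $M_1$ applied to $\bigl(\Phi^t_0(q)-\Phi^t_j(q)\bigr)_{j=1}^{r-1}$ by tracking its factors from the right: the lower-bidiagonal factor (diagonal $1$, subdiagonal $-1$) turns $\bigl(\Phi^t_0(q)-\Phi^t_j(q)\bigr)_j$ into the consecutive-difference vector $\bigl(\Phi^t_{j-1}(q)-\Phi^t_j(q)\bigr)_j$; $\operatorname{diag}(x_3^{-1},\dots,x_{r+1}^{-1})$ turns it into $(w_j)$; $T$ into $(g_{i-1})_i$; and $\operatorname{diag}(x_1^{-(r-1)},\dots,x_1^{-1})$ together with the scalar $x_{r+2}$ strip off the remaining $x_1$- and $x_{r+2}$-weights. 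The upshot is $M_1\bigl(\Phi^t_0(q)-\Phi^t_j(q)\bigr)_{j=1}^{r-1}=\bigl(\tfrac{\Psi^t_m-\Psi^t_{m+1}}{X_{m+2}}\bigr)_{m=0}^{r-2}$, so $\begin{pmatrix}0&M_1\\1&0\end{pmatrix}$ sends the input column $\bigl(\Phi^t_0(q),(\Phi^t_0(q)-\Phi^t_j(q))_j\bigr)^{\mathsf T}$ to the column with first $r-1$ entries $\tfrac{\Psi^t_m-\Psi^t_{m+1}}{X_{m+2}}$ and last entry $\Phi^t_0(q)$.

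Finally I would left-multiply by $M$. Its last column $\tfrac{1}{1-(1-q)x_1}\mathbf 1$ adds $\tfrac{\Phi^t_0(q)}{1-(1-q)x_1}$ to every row; its first $r-1$ columns, applied to $\bigl(\tfrac{\Psi^t_m-\Psi^t_{m+1}}{X_{m+2}}\bigr)_m$, give in row $i$ the quantity $\sum_{p=i}^{r-1}(\Psi^t_{p-1}-\Psi^t_p)-\tfrac{1}{1-(1-q)x_1}\sum_{i'=2}^{r}\tfrac{Y_{i'}}{X_{i'}}(\Psi^t_{i'-2}-\Psi^t_{i'-1})$, the first sum telescoping to $\Psi^t_{i-1}-\Psi^t_{r-1}$. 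Adding the two contributions and substituting the $j=0$ identity from the second paragraph makes the correction term vanish and leaves $\Psi^t_{i-1}$ in row $i$, which is the assertion. I expect the bookkeeping here to be the main obstacle: checking that the $i=j+1$ boundary term merges cleanly into the binomial sum, that the $x_1$-powers reassemble so the rescaled system is literally the one inverted by Lemma~\ref{lem3.5}, and that the diagonal and bidiagonal factors of $M_1$ and the columns of $M$ act exactly as claimed; every other step is a telescoping sum or a direct citation of an earlier lemma.
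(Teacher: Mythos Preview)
Your argument is correct and follows essentially the same route as the paper: both invert the relation of Lemma~\ref{lem3.4} using Lemma~\ref{lem3.5} to handle the binomial system, with the paper presenting this as a block-matrix factorization of $W^{-1}$ (showing $T_2^{-1}=M_1$) whereas you verify the asserted product directly by a scalar telescoping computation. The two presentations are equivalent, and your boundary checks (the $i=j+1$ term and the $x_1$-power bookkeeping) are exactly the places where the paper's Step~3 does its work.
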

\begin{proof}
The lemma follows from Lemma \ref{lem3.4}.

\noindent {\bf Step 1.} We first remark that from \eqref{eqlem3.4}, we can represent
$x_1,\ldots,x_{r+2}$ by $u_1,\ldots,u_{r+2}$.
In fact, by \eqref{eqlem3.4}, it is trivial that
$$x_1=\frac{u_1}{1+(1-q)u_1},\;x_{r+2}=\frac{u_{r+2}}{(1+(1-q)u_1)^{r+1}}.$$
We rewrite the formulas in \eqref{eqlem3.4} in matrix form
$$\begin{pmatrix}
u_2\\
u_3\\
\vdots\\
u_{r+1}
\end{pmatrix}=
T_1\begin{pmatrix}
x_2-\frac{x_{r+2}}{x_1^r}\\
x_3-\frac{x_{r+2}}{x_1^{r-1}}\\
\vdots\\
x_{r+1}-\frac{x_{r+2}}{x_1}
\end{pmatrix}+x_{r+2}
\begin{pmatrix}
\frac{1}{x_1^r(1-(1-q)x_1)}\\
\frac{1}{x_1^{r-1}(1-(1-q)x_1)^2}\\
\vdots\\
\frac{1}{x_1(1-(1-q)x_1)^r}
\end{pmatrix},$$
where $T_1=(t_{ij}^{(1)})$ is a $r\times r$ upper triangular matrix with
$$t_{ij}^{(1)}=\left\{
\begin{array}{ll}
\left({j-1\atop i-1}\right)(1-q)^{j-i}, & i\leq j,\\
0, & i>j.
\end{array}\right.$$
Hence
$$\begin{pmatrix}
x_2\\
x_3\\
\vdots\\
x_{r+1}
\end{pmatrix}=T_1^{-1}
\begin{pmatrix}
u_2-\frac{u_{r+2}}{u_1^r}\\
u_3-\frac{u_{r+2}}{u_1^{r-1}}\\
\vdots\\
u_{r+1}-\frac{u_{r+2}}{u_1}
\end{pmatrix}+\frac{u_{r+2}}{u_1^{r+1}}
\begin{pmatrix}
\frac{u_1}{1+(1-q)u_1}\\
\left(\frac{u_1}{1+(1-q)u_1}\right)^2\\
\vdots\\
\left(\frac{u_1}{1+(1-q)u_1}\right)^{r}
\end{pmatrix}.$$
We see that $T_1^{-1}$ is obtained from Lemma \ref{lem3.5}. A direct computation gives
the representations of $x_2,\ldots,x_{r+1}$ by $u_1,u_2,\ldots,u_{r+2}$ as we state at \eqref{x1} and \eqref{xj}.

\noindent {\bf Step 2.}
If we use the same notations as in Lemma \ref{lem3.4}, then
$$\begin{pmatrix}
\Psi^t_0(u_1,\ldots,u_{r+2})\\
\Psi^t_1(u_1,\ldots,u_{r+2})\\
\vdots\\
\Psi^t_{r-1}(u_1,\ldots,u_{r+2})
\end{pmatrix}=
\begin{pmatrix}
 1  & 1 & \cdots & 1 \\
   & 1 & \cdots & 1 \\
     &  & \ddots& \vdots\\
   &   &  &  1
\end{pmatrix}D^{-1}W^{-1}
\begin{pmatrix}
\Phi^t_0(x_1,\ldots,x_{r+2};q)\\
\Phi^t_1(x_1,\ldots,x_{r+2};q)\\
\vdots\\
\Phi^t_{r-1}(x_1,\ldots,x_{r+2};q)
\end{pmatrix}.$$
Now
$$\begin{pmatrix}
1\\
1 & -1 \\
1& & -1\\
\vdots&&& \ddots\\
1 &&&& -1
\end{pmatrix}W
\begin{pmatrix}
1\\
&1\\
&&\ddots\\
&&& 1\\
-Y_2 & -Y_3 & \cdots & -Y_r & 1
\end{pmatrix}=\begin{pmatrix}
0 & 1 \\
T_2& 0
\end{pmatrix},$$
where
$$T_2=\begin{pmatrix}
Z_{22} \\
Z_{23} & Z_{33}\\
Z_{24} & Z_{34} & Z_{44} \\
\vdots& && \ddots\\
Z_{2r} & Z_{3r} & Z_{4r} & \cdots & Z_{rr}
\end{pmatrix}$$
is a $(r-1)\times (r-1)$ matrix.
Hence
$$W^{-1}=
\begin{pmatrix}
1\\
&1\\
&&\ddots\\
&&& 1\\
-Y_2 & -Y_3 & \cdots & -Y_r & 1
\end{pmatrix}
\begin{pmatrix}
0 & T_2^{-1}\\
1 & 0
\end{pmatrix}
\begin{pmatrix}
1\\
1 & -1 \\
1& & -1\\
\vdots&&& \ddots\\
1 &&&& -1
\end{pmatrix}.
$$
On the other hand,  direct computations give that
$$M=\begin{pmatrix}
 1  & 1 & \cdots & 1 \\
   & 1 & \cdots & 1 \\
     &  & \ddots\\
   &   &  &  1
\end{pmatrix}D^{-1}
\begin{pmatrix}
1\\
&1\\
&&\ddots\\
&&& 1\\
-Y_2 & -Y_3 & \cdots & -Y_r & 1
\end{pmatrix}. $$
Thus we only need to show that $T_2^{-1}=M_1$.

\noindent {\bf Step 3.} By definition, 
$$Z_{p,j+1}-Z_{pj}=\frac{1}{x_{r+2}}\left({j-1\atop p-2}\right)(1-q)^{j+1-p}x_{1}^{r+1-p}x_{j+2}$$
for $2\leq p \leq j\leq r-1$ and
$$Z_{pp}=\frac{1}{x_{r+2}}x_1^{r+1-p}x_{p+1}$$
for $2\leq p\leq r$. Hence we get
$$\begin{pmatrix}
1 \\
-1 & 1 \\
& -1 & 1\\
&& \ddots & \ddots\\
&&&-1 & 1
\end{pmatrix}T_2=
\frac{1}{x_{r+2}}
\begin{pmatrix}
x_3 \\
 & x_4 \\
 && \ddots\\
 &&& x_{r+1}
\end{pmatrix}T_3
\begin{pmatrix}
x_1^{r-1}\\
& x_1^{r-2}\\
&&\ddots\\
&&&x_1
\end{pmatrix},$$
where $T_3=(t_{ij}^{(3)})$ is the lower triangular matrix defined by
$$t_{ij}^{(3)}=\left\{\begin{array}{ll}
\left({i-1\atop j-1}\right)(1-q)^{i-j}, & i\geq j,\\
0, & i<j.
\end{array}
\right.$$
From Lemma \ref{lem3.5}, $T_3^{-1}=T$ (which is given in Proposition \ref{lem10}), thus we obtain this lemma.
\end{proof}
\begin{proof}[Proof of Proposition \ref{lem10}]
This is nothing but the first component of the identity in Lemma \ref{prop3.6}. 
\end{proof}
\subsection{Relation between $\Psi^t_0$ and ${}_{r+2} \phi_{r+1}$}
We recall that $\widetilde{A^{(i)}_j}$ is given in Theorem \ref{thm9} and 
$c, \widetilde{B_j}, v_j$ are in Corollary \ref{Cor}. Under these notations, we have the following proposition. 
\begin{prop}\label{Prop} 
We have 
\begin{align*}
\Psi^t_0
& = \f{1}{1-(1-q)x_1} \f{1}{c} \sum_{j=0}^{r-1} \widetilde{A^{(0)}_j} \widetilde{B_j} v_j \\
& \quad + \f{x_{r+2}}{c} \sum_{j=0}^{r-1} \left\{ \sum_{i=2}^{r} \f{1}{x_1^{r+1-i}} \left( X_{i} -\f{Y_{i}}{1-(1-q)x_1} \right)c_{i-1, j+1} \right\}\widetilde{B_j} v_j \\
 & = \f{1}{c(1-(1-q)x_1)} \sum_{j=0}^{r-1} A_j \widetilde{B_j} v_j
\end{align*}
where 
$$A_j = \widetilde{A^{(0)}_j} + x_{r+2} \sum_{i=2}^{r} \left\{ X_i \left( 1-(1-q)x_1 \right) -Y_i \right\} \f{c_{i-1, j+1}}{x_1^{r+1-i}}$$
with $0 \leq j \leq r-1$ and 
$c_{i,j}$'s are given in Lemma \ref{lem11} below. 
\end{prop}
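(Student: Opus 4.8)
The plan is to start from the identity of Lemma \ref{prop3.6} (equivalently, Proposition \ref{lem10}), which already expresses $\Psi^t_0$ as a linear combination of $\Phi^t_0(q)$ and the differences $\Phi^t_0(q)-\Phi^t_j(q)$ for $j=1,\ldots,r-1$, with coefficients built from the $X_i$, $Y_i$ and the matrix $M_1$. The first move is to substitute the hypergeometric expressions of Corollary \ref{Cor}, namely $\Phi^t_j(q)=\frac1c\sum_{i=0}^{r-1-j}\widetilde{A^{(j)}_i}\widetilde{B_i}v_i$, so that everything becomes a linear combination of the basic quantities $\widetilde{B_j}v_j$ ($j=0,\ldots,r-1$). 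Concretely, $\frac1{1-(1-q)x_1}\Phi^t_0(q)$ contributes the first displayed sum, and the term $\underline X M_1(\Phi^t_0(q)-\Phi^t_j(q))_j$ must be reorganized. I would first express each difference $\Phi^t_0(q)-\Phi^t_j(q)$ again via Corollary \ref{Cor}, then carry out the matrix products in $\underline X M_1$.

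The key computational step is identifying the coefficient of each $\widetilde{B_j}v_j$ in the term $\underline X M_1(\Phi^t_0(q)-\Phi^t_\bullet(q))$. Here I would invoke Lemma \ref{lem11} (to be stated below), which—reading the formula for $M_1$ as a product of a diagonal matrix of the $x_1^{-(r+1-i)}$, the triangular matrix $T$, diagonal $x_{k+2}^{-1}$, and the difference matrix—packages exactly the combinatorial coefficients $c_{i,j}$ governing how $T$ acts on the columns indexed by $j$. The point is that $M_1$ applied to the vector of differences of $\Phi^t_j(q)$, after inserting the Corollary \ref{Cor} formulas, telescopes the $v_i$ contributions so that the coefficient of $\widetilde{B_j}v_j$ in the $i$-th entry of $M_1(\Phi^t_0(q)-\Phi^t_\bullet(q))$ is $x_{r+2}\,c_{i-1,j+1}$ up to the explicit diagonal factors; multiplying on the left by $\underline X=\bigl(X_i-\frac{Y_i}{1-(1-q)x_1}\bigr)_i$ then produces the middle display. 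The final equality—collecting the $\frac1{c(1-(1-q)x_1)}$ prefactor and reading off $A_j=\widetilde{A^{(0)}_j}+x_{r+2}\sum_{i=2}^r\{X_i(1-(1-q)x_1)-Y_i\}\,c_{i-1,j+1}/x_1^{r+1-i}$—is then pure bookkeeping.

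The main obstacle I expect is the second step: verifying that the matrix product $\underline X M_1$ acting on the vector of $\Phi$-differences, once the Corollary \ref{Cor} expansions are substituted, really collapses to the stated single sum $\sum_j(\cdots)\widetilde{B_j}v_j$ with coefficients $c_{i-1,j+1}$. This requires the precise interplay between the lower-triangular binomial matrix $T$ (with inverse given by Lemma \ref{lem3.5}) and the triangular structure of the matrix $A=(\widetilde{A^{(i)}_j})$ appearing in Corollary \ref{Cor}; in effect one needs an identity expressing the entries of $T\cdot A$ (suitably diagonally conjugated) in terms of $c_{i,j}$, which is precisely what Lemma \ref{lem11} is designed to supply. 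Once Lemma \ref{lem11} is in hand, the proof of Proposition \ref{Prop} reduces to matching indices and extracting the first component, so I would state and prove Lemma \ref{lem11} first and then conclude with a short bookkeeping argument.
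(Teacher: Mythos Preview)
Your proposal is correct and follows essentially the same route as the paper: start from Proposition \ref{lem10}, insert the expansions of Corollary \ref{Cor} for the $\Phi^t_j(q)$, and use Lemma \ref{lem11} to identify the coefficients $c_{i,j}$ arising from the matrix product so that $\underline{X}M_1$ applied to the difference vector yields the second displayed sum. The paper isolates the intermediate matrix identity as a separate Lemma \ref{Lem22} (observing that the lower bidiagonal matrix applied to $(\Phi^t_0-\Phi^t_j)_j$ equals the upper bidiagonal matrix applied to $(\Phi^t_j)_j$, hence by Corollary \ref{Cor} equals $\frac{1}{c}\begin{pmatrix}1&-1&&\\&\ddots&\ddots&\\&&1&-1\end{pmatrix}A(\widetilde{B_j}v_j)_j$), but this is exactly the step you describe as ``telescopes the $v_i$ contributions,'' so there is no real difference in strategy.
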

\noindent
Before we prove this proposition,  we prepare two lemmas. 
\begin{lem}\label{Lem22}We have 
\begin{align*}\nonumber
& \underline{X} M_1\begin{pmatrix} \Phi^t_0(q) - \Phi^t_1(q)\\ \Phi^t_0(q) - \Phi^t_2(q)\\ \vdots\\ \Phi^t_0(q) - \Phi^t_{r-1}(q) \end{pmatrix}\\ \nonumber
& = \f{x_{r+2}}{c} \underline{X} \begin{pmatrix} x_1^{-(r-1)} & & & \\ & x_1^{-(r-2)} & & \\ & & \ddots & \\ & & & x_1^{-1} \end{pmatrix}
T 
\begin{pmatrix} x_3^{-1} & & & \\ & x_4^{-1} & & \\ & & \ddots & \\ & & & x_{r+1}^{-1} \end{pmatrix}\\ 
& \quad \times \begin{pmatrix} 1 & -1 & & & \\ &1 & -1 & & \\ & & \ddots& \ddots & \\ & & &1& -1 \end{pmatrix}
A 
\begin{pmatrix}
\widetilde{B_0} v_0 \\ \widetilde{B_1}v_1 \\  \vdots \\ \widetilde{B_{r-1}} v_{r-1}  
\end{pmatrix}, 
\end{align*} 
where $\underline{X}$, $T$ and $M_1$ are same as those in Proposition \ref{lem10} and $A$ is same as that in Corollary \ref{Cor}.  
\end{lem}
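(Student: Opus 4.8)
The plan is to read the asserted equality off as pure bookkeeping: it merely rearranges the factored form of $M_1$ provided by Proposition~\ref{lem10} together with the matrix relation of Corollary~\ref{Cor}, so no new analytic input is required. Concretely, I would first substitute $M_1 = x_{r+2}\,D_1\,T\,D_2\,L$ from Proposition~\ref{lem10}, where $D_1 = \operatorname{diag}(x_1^{-(r-1)},\ldots,x_1^{-1})$, $D_2 = \operatorname{diag}(x_3^{-1},\ldots,x_{r+1}^{-1})$, and $L$ is the $(r-1)\times(r-1)$ lower bidiagonal matrix with $1$'s on the diagonal and $-1$'s immediately below. Writing $w$ for the column vector with entries $\Phi^t_0(q)-\Phi^t_j(q)$, $j=1,\ldots,r-1$, the left-hand side of the lemma then becomes $x_{r+2}\,\underline{X}\,D_1\,T\,D_2\,(Lw)$, so the whole identity reduces to identifying the vector $Lw$.

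The heart of the matter is a telescoping computation. Reading off the rows of $L$ gives $(Lw)_1 = \Phi^t_0(q)-\Phi^t_1(q)$ and $(Lw)_m = -w_{m-1}+w_m = \Phi^t_{m-1}(q)-\Phi^t_m(q)$ for $2\le m\le r-1$; hence $Lw = \widetilde{L}\,\Phi^{\mathrm{col}}$, where $\Phi^{\mathrm{col}} = (\Phi^t_0(q),\ldots,\Phi^t_{r-1}(q))^{\mathsf{T}}$ and $\widetilde{L}$ is the $(r-1)\times r$ upper bidiagonal matrix with $1$'s on the diagonal and $-1$'s immediately above — precisely the bidiagonal matrix appearing on the right-hand side of the lemma. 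I would verify this by a one-line induction on $m$, or just by comparing two consecutive rows.

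Finally I would invoke Corollary~\ref{Cor} in the form $\Phi^{\mathrm{col}} = \tfrac1c\, A\,(\widetilde{B_0}v_0,\ldots,\widetilde{B_{r-1}}v_{r-1})^{\mathsf{T}}$. Substituting this into $x_{r+2}\,\underline{X}\,D_1\,T\,D_2\,(Lw)$ produces exactly $\tfrac{x_{r+2}}{c}\,\underline{X}\,D_1\,T\,D_2\,\widetilde{L}\,A\,(\widetilde{B_0}v_0,\ldots,\widetilde{B_{r-1}}v_{r-1})^{\mathsf{T}}$, which is the right-hand side of the statement, and the proof is complete.

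The one point that calls for care is the dimension bookkeeping: $L$ is square of size $r-1$ while $\widetilde{L}$ has size $(r-1)\times r$, because $w$ runs over $j=1,\ldots,r-1$ whereas $\Phi^{\mathrm{col}}$ runs over $j=0,\ldots,r-1$. Keeping these index ranges straight is exactly what makes the telescoping identity $Lw = \widetilde{L}\,\Phi^{\mathrm{col}}$ come out correctly; apart from that, every step is formal matrix manipulation and there is no substantive obstacle.
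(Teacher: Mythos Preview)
Your proposal is correct and follows essentially the same approach as the paper: the paper's proof consists of exactly the telescoping identity $Lw=\widetilde{L}\,\Phi^{\mathrm{col}}$ (stated there as a single displayed matrix equation) followed by an appeal to Corollary~\ref{Cor}, which is precisely your argument. Your write-up is in fact somewhat more explicit about the dimension bookkeeping than the paper's.
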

\begin{proof}
Since
\begin{align*}
\begin{pmatrix} 1 & & & \\ -1&1 & & \\ & \ddots& \ddots & \\ & &-1& 1 \end{pmatrix}
\begin{pmatrix} \Phi^t_0(q) - \Phi^t_1(q)\\ \Phi^t_0(q) - \Phi^t_2(q)\\ \vdots\\ \Phi^t_0(q) - \Phi^t_{r-1}(q) \end{pmatrix}
=\begin{pmatrix} 1 & -1 & & & \\ &1 & -1 & & \\ & & \ddots& \ddots & \\ & & &1& -1 \end{pmatrix}
\begin{pmatrix} \Phi^t_0(q) \\ \Phi^t_1(q) \\ \vdots\\ \Phi^t_{r-1}(q) \end{pmatrix}
\end{align*}
and Corollary \ref{Cor}, we have the lemma.  
\end{proof}
\begin{lem}\label{lem11}
We have the following identities.  
\begin{itemize}
\item[(i)] 
$$\widetilde{A^{(j)}_{r-1-j}} = x_{j+3} S_q(r-j, r-j)$$
for $j=0, 1, \ls, r-1$. 
\item[(ii)] 
$$\widetilde{A^{(j)}_{i}} - \widetilde{A^{(j+1)}_{i}}= x_{j+3} \left\{ S_q(r-j, i+1) -x_1 S_q(r-j-1, i+1) \right\}$$
for $j=0, 1, \ls, r-2$ and $i=0, 1, \ls, r-2-j$.  
\item[(iii)] 
$$
\begin{pmatrix} x_3^{-1} & & & \\ & x_4^{-1} & & \\ & & \ddots & \\ & & & x_{r+1}^{-1} \end{pmatrix}
\begin{pmatrix} 1 & -1 & & & \\ &1 & -1 & & \\ & & \ddots& \ddots & \\ & & &1& -1 \end{pmatrix}
A = (a_{ij})_{(r-1) \times r}$$
with
$$a_{ij} = \left\{ \begin{array}{ll}
S_q(r+1-i, j) - x_1 S_q(r-i, j), & i+j \leq r+1, \\
0, & \text{otherwise},  
\end{array}\right. $$
where $A$ is given in Corollary \ref{Cor}. 
\item[(iv)]
$$T \begin{pmatrix} x_3^{-1} & & & \\ & x_4^{-1} & & \\ & & \ddots & \\ & & & x_{r+1}^{-1} \end{pmatrix}
\begin{pmatrix} 1 & -1 & & & \\ &1 & -1 & & \\ & & \ddots& \ddots & \\ & & &1& -1 \end{pmatrix}
A = (c_{ij})_{(r-1) \times r}$$
with 
$$c_{ij} = \sum_{m=1}^{min\left\{ i, r+1-j \right\}}(q-1)^{i-m}{i-1 \choose m-1} a_{mj}. $$
\end{itemize}
\end{lem}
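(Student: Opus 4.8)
The plan is to prove the four identities in the order listed, since (iii) rests on (i) and (ii), and (iv) is an immediate corollary of (iii).

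For (i) and (ii) I would substitute the closed form from Theorem \ref{thm9},
\[
\widetilde{A^{(j)}_i} = \sum_{m=i}^{r-1-j}(x_{r+2-m} - x_1 x_{r+1-m})S_q(m+1, i+1) + x_1 x_{j+2} S_q(r-j, i+1),
\]
and compute directly. For (i), putting $i = r-1-j$ collapses the sum to its single term $m = r-1-j$, namely $(x_{j+3} - x_1 x_{j+2})S_q(r-j, r-j)$; adding the trailing term $x_1 x_{j+2}S_q(r-j, r-j)$ cancels the $x_1 x_{j+2}$ contribution, leaving $x_{j+3}S_q(r-j, r-j)$. For (ii), the sum defining $\widetilde{A^{(j)}_i}$ runs exactly one step further than the one defining $\widetilde{A^{(j+1)}_i}$, so the difference of the two sums is just the top term $(x_{j+3} - x_1 x_{j+2})S_q(r-j, i+1)$; combining it with $x_1 x_{j+2}S_q(r-j, i+1) - x_1 x_{j+3}S_q(r-j-1, i+1)$ and simplifying gives $x_{j+3}\{S_q(r-j, i+1) - x_1 S_q(r-j-1, i+1)\}$. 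Both computations are routine and use only the explicit formula.

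For (iii), I would first note that the matrix $A$ of Corollary \ref{Cor} is the $r\times r$ matrix whose $(i,j)$ entry is $\widetilde{A^{(i-1)}_{j-1}}$, with the convention that this entry is $0$ as soon as $j-1 > r-i$ (the upper-right triangle of zeros). Left-multiplying $A$ by the $(r-1)\times r$ bidiagonal matrix with $1$'s on the diagonal and $-1$'s on the superdiagonal yields the $(r-1)\times r$ matrix with $(i,j)$ entry $\widetilde{A^{(i-1)}_{j-1}} - \widetilde{A^{(i)}_{j-1}}$, and the factor $\operatorname{diag}(x_3^{-1}, \ldots, x_{r+1}^{-1})$ then divides the $i$-th row by $x_{i+2}$. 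To identify the result with $(a_{ij})$ I would split into three ranges. If $i+j \le r$, then (ii) applies directly, giving $\widetilde{A^{(i-1)}_{j-1}} - \widetilde{A^{(i)}_{j-1}} = x_{i+2}\{S_q(r+1-i, j) - x_1 S_q(r-i, j)\}$, so division by $x_{i+2}$ produces $a_{ij}$. If $i+j = r+1$, then $\widetilde{A^{(i)}_{j-1}} = 0$ because its subscript exceeds the range $r-1-i$, while $\widetilde{A^{(i-1)}_{j-1}} = \widetilde{A^{(i-1)}_{r-i}} = x_{i+2}S_q(r+1-i, r+1-i)$ by (i); dividing by $x_{i+2}$ gives $S_q(r+1-i, r+1-i)$, which equals $S_q(r+1-i, j) - x_1 S_q(r-i, j)$ since $S_q(r-i, r+1-i) = 0$. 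If $i+j > r+1$, both $\widetilde{A^{(i-1)}_{j-1}}$ and $\widetilde{A^{(i)}_{j-1}}$ are out of range, so the entry is $0 = a_{ij}$.

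Finally, (iv) follows by left-multiplying the matrix $(a_{ij})$ of (iii) by the lower-triangular matrix $T$: since $t_{im} = \binom{i-1}{m-1}(q-1)^{i-m}$ for $m \le i$ and $a_{mj} = 0$ for $m > r+1-j$, the $(i,j)$ entry of the product is $\sum_{m=1}^{\min\{i,\,r+1-j\}}(q-1)^{i-m}\binom{i-1}{m-1}a_{mj} = c_{ij}$. The only delicate point in the whole argument — and the part I would treat as the main obstacle — is keeping the triangular zero-patterns of $A$, of the bidiagonal matrix, and of $T$ aligned, so that (ii) is invoked precisely on the range $i+j \le r$ where it is valid, the boundary diagonal $i+j = r+1$ is handled by (i), and the entries strictly below that diagonal vanish because the relevant $\widetilde{A^{(\cdot)}_{\cdot}}$ are out of range.
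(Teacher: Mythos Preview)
Your proposal is correct and follows essentially the same approach as the paper's own proof: direct substitution of the formula for $\widetilde{A^{(j)}_i}$ from Theorem~\ref{thm9} for (i) and (ii), then the matrix computation using (i) and (ii) for (iii), and the straightforward product with $T$ for (iv). If anything, your case analysis for (iii) (splitting into $i+j\le r$, $i+j=r+1$, $i+j>r+1$) is more explicit than the paper's terse treatment, but the argument is the same.
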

\begin{proof}
%
(i) Considering the case of $i=r-1-j$ in the definition $\widetilde{A^{(j)}_{i}}$ (in Theorem \ref{thm9}), we have
\begin{align*}
\widetilde{A^{(j)}_{r-1-j}} & = (x_{j+3} - x_1 x_{j+2}) S_q(r-j, r-j) + x_1 x_{j+2} S_q(r-j, r-j) \\
& = x_{j+3} S_q(r-j, r-j). 
\end{align*}
(ii) By definition we get
\begin{align*}
& \widetilde{A^{(j)}_{i}} - \widetilde{A^{(j+1)}_{i}} \\
& = \sum_{m=i}^{r-1-j} (x_{r+2-m} - x_1 x_{r+1-m}) S_q(m+1, i+1) +x_1 x_{j+2}S_q(r-j, i+1)\\
&\quad  -\sum_{m=i}^{r-2-j} (x_{r+2-m} - x_1 x_{r+1-m}) S_q(m+1, i+1)
-x_1 x_{j+3} S_q(r-1-j, i+1)\\
& = (x_{j+3} - x_1 x_{j+2}) S_q(r-j, i+1) + x_1 x_{j+2} S_q(r-j, i+1) \\
& \quad - x_1x_{j+3} S_q(r-1-j, i+1)\\
& = x_{j+3} \left( S_q(r-j, i+1) - x_1 S_q(r-1-j, i+1) \right). 
\end{align*}
(iii) By using (i) and (ii), we have
\begin{align*}
\begin{pmatrix} 1 & -1 & & & \\ &1 & -1 & & \\ & & \ddots& \ddots & \\ & & &1& -1 \end{pmatrix} A & = \begin{pmatrix} \widetilde{A^{(0)}_0} - \widetilde{A^{(1)}_0} & \widetilde{A^{(0)}_1} - \widetilde{A^{(1)}_1} & \cs & \widetilde{A^{(0)}_{r-2}} - \widetilde{A^{(1)}_{r-2}} & \widetilde{A^{(0)}_{r-1}}  \\
 \widetilde{A^{(1)}_0} - \widetilde{A^{(2)}_0} & \widetilde{A^{(1)}_1} - \widetilde{A^{(2)}_1} & \cs & \widetilde{A^{(1)}_{r-2}} & \\
 \vdots & \vdots & \iddots & & \\
 \widetilde{A^{(r-2)}_0} - \widetilde{A^{(r-1)}_0} & \widetilde{A^{(r-2)}_1}& &  & 
 \end{pmatrix}\\
 & = 
 \begin{pmatrix} x_3 a_{11} & x_3 a_{12} & \cs & x_3 a_{1, r-1} & x_3 a_{1r} \\
 x_4 a_{21} & x_4 a_{22} & \cs & x_4 a_{2,{r-1}} & \\
 \vdots & \vdots & \iddots & & \\
 x_{r+1} a_{r-1, 1} & x_{r+1} a_{r-1, 2} & & & 
 \end{pmatrix}. 
\end{align*}
(iv) 
By using (iii), we have 
\begin{align*}
c_{ij} & = \sum_{m=1}^{r-1} t_{im} a_{mj} 
 = \sum_{{{{1 \leq m \leq r-1} \atop {i \geq m}} \atop {m+j \leq r+1}}} t_{im} a_{mj} 
 = \sum_{m=1}^{min\left\{ i, r+1-j \right\}} { i-1 \choose m-1} (q-1)^{i-m} a_{mj}. 
\end{align*}
\end{proof}
\begin{proof}[Proof of Proposition \ref{Prop}]
Using Lemma \ref{Lem22} and Lemma \ref{lem11} (iv), we have
\begin{align*}
\underline{X} M_1 \begin{pmatrix} \Phi^t_0(q) - \Phi^t_1(q)\\ \Phi^t_0(q) - \Phi^t_2(q)\\ \vdots\\ \Phi^t_0(q) - \Phi^t_{r-1}(q) \end{pmatrix} 
 = \f{x_{r+2}}{c} \underline{X} \begin{pmatrix} x_1^{-(r-1)} & & & \\ & x_1^{-(r-2)} & & \\ & & \ddots & \\ & & & x_1^{-1} \end{pmatrix}
(c_{ij})
\begin{pmatrix}
\widetilde{B_0} v_0 \\ \widetilde{B_1} v_1 \\  \vdots \\ \widetilde{B_{r-1}} v_{r-1}  
\end{pmatrix}.
\end{align*}
The right-hand side is equal to 
$$\f{x_{r+2}}{c} \sum_{j=0}^{r-1} 
\left\{ \sum_{i=2}^{r} \f{1}{x_1^{r+1-i}} \left( X_{i} -\f{Y_{i}}{1-(1-q)x_1} \right)c_{i-1, j+1} \right\}\widetilde{B_j} v_j. $$
Applying this and Corollary \ref{Cor} to Proposition \ref{lem10}, we conclude Proposition \ref{Prop}. 
\end{proof}
\subsection{Expression of $A_j$ in terms of $q$-Stirling numbers}
Put $q_1 = 1-q$. We have the following expression of $A_j$ in Proposition \ref{Prop}. 

\begin{prop}\label{propAj}
For $0 \leq j \leq r-1$, we have
$$A_j = \sum_{m=j}^{r-1} c_m S_q(m+1, j+1) + \f{u_1 u_2}{(1+q_1u_1)^2} S_q(r, j+1), $$
where
\begin{align*}
{c}_m = & \sum\limits_{k=r-m+1}^{r+1}\left\{\left({k-2\atop
r-m}\right)+\frac{q_1u_1}{1+q_1u_1}\left({k-2\atop
r-m-1}\right)\right\}(-q_1)^{k-r+m-2}\\
& \times \left(\frac{u_k}{1+q_1u_1}-u_1^{k-r-2}u_{r+2}
+\frac{q_1u_{k+1}}{1+q_1u_1}\right). 
\end{align*}
\end{prop}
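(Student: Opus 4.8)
The plan is to start from the formula for $A_j$ established in Proposition \ref{Prop},
$$A_j=\widetilde{A^{(0)}_j}+x_{r+2}\sum_{i=2}^{r}\bigl\{X_i(1-(1-q)x_1)-Y_i\bigr\}\f{c_{i-1,j+1}}{x_1^{r+1-i}},$$
and rewrite it in the stated shape by inserting the available explicit formulas. By Theorem \ref{thm9}, $\widetilde{A^{(0)}_j}=\sum_{m=j}^{r-1}(x_{r+2-m}-x_1x_{r+1-m})S_q(m+1,j+1)+x_1x_2\,S_q(r,j+1)$, and by Lemma \ref{lem11} (iii), (iv), $c_{i-1,j+1}=\sum_{s=1}^{\min\{i-1,\,r-j\}}(q-1)^{i-1-s}\binom{i-2}{s-1}a_{s,j+1}$ with $a_{s,j+1}=S_q(r+1-s,j+1)-x_1S_q(r-s,j+1)$. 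This exhibits $A_j$ explicitly as a $\Q(q)[x_1,\ls,x_{r+2}]$-linear combination of the $q$-Stirling numbers $S_q(m+1,j+1)$, $j\le m\le r-1$; since the target expression is the same kind of combination, it suffices to match, for each $m$, the coefficient of $S_q(m+1,j+1)$ --- which comes out independent of $j$ --- with $c_m$ when $j\le m\le r-2$ and with $c_{r-1}+\f{u_1u_2}{(1+q_1u_1)^2}$ when $m=r-1$ (there $S_q(r,j+1)=S_q((r-1)+1,j+1)$, so the two separate contributions collide), all as identities of rational functions in $u_1,\ls,u_{r+2}$.

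Two preliminary reductions make this tractable. First, using $(1-(1-q)x_1)^{-p}(1-(1-q)x_1)=(1-(1-q)x_1)^{1-p}$ and Pascal's rule $\binom{\ell-1}{p-1}-\binom{\ell-2}{p-2}=\binom{\ell-2}{p-1}$ in the definitions of $X_p,Y_p$ preceding Proposition \ref{lem10}, the ``pure power'' tails cancel and one is left with
$$x_{r+2}\bigl\{X_p(1-(1-q)x_1)-Y_p\bigr\}=\sum_{\ell=p}^{r}\Bigl(\binom{\ell-2}{p-1}-(1-q)x_1\binom{\ell-1}{p-1}\Bigr)((1-q)x_1)^{\ell-p}\bigl(x_1^{r+1-\ell}x_{\ell+1}-x_{r+2}\bigr).$$
Second, from \eqref{x1}, \eqref{xj} (equivalently, by inverting \eqref{eqlem3.4}) one has $x_1=\f{u_1}{1+q_1u_1}$, $1-(1-q)x_1=\f{1}{1+q_1u_1}$, $x_{r+2}=\f{u_{r+2}}{(1+q_1u_1)^{r+1}}$, together with the clean intermediate identity
$$x_1^{r+1-\ell}x_{\ell+1}-x_{r+2}=\f{1}{(1+q_1u_1)^{r+1-\ell}}\sum_{k=\ell+1}^{r+1}\binom{k-2}{\ell-1}(-q_1u_1)^{k-\ell-1}\bigl(u_1^{r+2-k}u_k-u_{r+2}\bigr),$$
so that after substitution every term becomes a multiple of some $u_1^{r+2-k}u_k-u_{r+2}=u_1^{r+2-k}(u_k-u_1^{k-r-2}u_{r+2})$ or of the shifted expression obtained by $k\mapsto k+1$.

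With these in hand, collecting the coefficient of $S_q(m+1,j+1)$ goes as follows: from $a_{s,j+1}$ this coefficient picks up the term $s=r-m$ (with weight $+1$) and the term $s=r-m-1$ (with weight $-x_1$); combining with the displayed form of $X_p(1-(1-q)x_1)-Y_p$ produces a double sum over the running indices $i$ and $\ell$, which after interchanging the order of summation is collapsed to a single sum over $\ell$ by a Vandermonde-type binomial convolution. Translating to the $u$-variables by the identities above, and adding the contribution $x_{r+2-m}-x_1x_{r+1-m}$ coming from $\widetilde{A^{(0)}_j}$ (also re-expanded via \eqref{xj}, its two pieces $x_{r+2-m}$ and $-x_1x_{r+1-m}$ producing respectively the $\binom{k-2}{r-m}$ and the $\f{q_1u_1}{1+q_1u_1}\binom{k-2}{r-m-1}$ parts of the weight), one recovers exactly the sum over $k$ from $r-m+1$ to $r+1$ with the stated weight $\binom{k-2}{r-m}+\f{q_1u_1}{1+q_1u_1}\binom{k-2}{r-m-1}$ and power $(-q_1)^{k-r+m-2}$, i.e.\ the quantity $c_m$; at $m=r-1$ the tail term $x_1x_2\,S_q(r,j+1)$ of $\widetilde{A^{(0)}_j}$, combined with the $m=r-1$ part of the double sum, yields the extra summand $\f{u_1u_2}{(1+q_1u_1)^2}S_q(r,j+1)$. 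The main obstacle throughout is bookkeeping: keeping precise track of which pairs $(i,\ell)$ and which of the two values of $s$ feed a given $S_q(m+1,j+1)$, checking that the truncation $s\le\min\{i-1,r-j\}$ never suppresses a relevant term (so that the apparent $j$-dependence genuinely cancels, consistent with $c_m$ being defined purely from the $u_i$), verifying that the nested binomial sums collapse to the claimed single-sum weight, and handling the boundary index $m=r-1$ where two $S_q(r,j+1)$-contributions coincide.
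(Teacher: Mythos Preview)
Your plan is correct and follows essentially the same route as the paper: decompose $A_j$ into $\widetilde{A^{(0)}_j}$ and the $X/Y$-sum, convert both into $\Q[u_1,\ldots,u_{r+2}]$-combinations of $S_q(m+1,j+1)$, and match coefficients. The paper packages this as Lemma \ref{lem12}(i)--(iii); your proposal hits the same three computations and the same hockey-stick/Vandermonde collapse $\sum_{p=m+1}^{k-1}\binom{p-2}{m-1}=\binom{k-2}{m}$, and your boundary analysis at $m=r-1$ matches the paper's treatment.

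The one organizational difference worth noting: for $X_p(1-q_1x_1)-Y_p$ the paper first establishes the telescoping relation $Y_p=\sum_{k=p}^r(-q_1x_1)^{k-p}X_k+\frac{(-q_1x_1)^{r+1-p}}{(1-q_1x_1)^r}$ (Lemma \ref{lem3.7}) and then substitutes the already-known $X_p=\frac{x_1^{r+1-p}u_{p+1}}{x_{r+2}}$ to obtain directly
$$X_p(1-q_1x_1)-Y_p=\frac{u_1^{r+1-p}(1+q_1u_1)^{p-1}}{u_{r+2}}\sum_{k=p+1}^{r+1}(-q_1)^{k-p}(u_1u_k-u_{k+1}),$$
whereas you derive it by Pascal's rule in the $x$-variables and then feed in your identity for $x_1^{r+1-\ell}x_{\ell+1}-x_{r+2}$. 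Both are correct; the paper's route is slightly shorter because it avoids the extra layer of binomial sums and lands immediately on the single-sum form in $u_1u_k-u_{k+1}$, which makes the subsequent coefficient-matching a one-line telescoping rather than a genuine Vandermonde. Your route is perfectly viable, just a bit heavier on the bookkeeping you flag.
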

\noindent
For the proof, we need two lemmas. 
\begin{lem}\label{lem3.7}
The identity
\begin{align}\label{eqlem3.7} 
Y_p=\sum\limits_{k=p}^r(-q_1x_1)^{k-p}X_k+\frac{(-q_1 x_1)^{r+1-p}}{(1-q_1x_1)^r}
\end{align}
holds for $p=2,\ldots,r$, where $X_j$'s and $Yj$'s are same as those in Proposition \ref{lem10}. 
\end{lem}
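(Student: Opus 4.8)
The plan is to prove \eqref{eqlem3.7} by a direct computation: substitute the definition of $X_k$ into the right-hand side and match the result term by term against the definition of $Y_p$. Write $w=q_1x_1$ and, for $p\le i\le r$, put $A_i=\f{x_1^{r+1-i}x_{i+1}-x_{r+2}}{x_{r+2}}$, so that $X_p=\sum_{i=p}^{r}\binom{i-1}{p-1}w^{i-p}A_i+(1-w)^{-p}$ and $Y_p=\sum_{i=p}^{r}\binom{i-2}{p-2}w^{i-p}A_i+(1-w)^{1-p}$. Inserting the formula for $X_k$ into $\sum_{k=p}^{r}(-w)^{k-p}X_k+\f{(-w)^{r+1-p}}{(1-w)^{r}}$ splits the right-hand side into an ``$A$-part'' -- the double sum $\sum_{k=p}^{r}\sum_{i=k}^{r}(-w)^{k-p}w^{i-k}\binom{i-1}{k-1}A_i$ -- and a ``geometric part'' $\sum_{k=p}^{r}(-w)^{k-p}(1-w)^{-k}+\f{(-w)^{r+1-p}}{(1-w)^{r}}$. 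I will handle these two pieces separately and show that the first equals the $A$-sum in $Y_p$ and the second equals $(1-w)^{1-p}$.

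For the $A$-part I would interchange the order of summation so that $i$ runs from $p$ to $r$ and $k$ from $p$ to $i$, and use $(-w)^{k-p}w^{i-k}=(-1)^{k-p}w^{i-p}$ to pull $w^{i-p}$ outside the inner sum. What remains is $\sum_{i=p}^{r}A_i\,w^{i-p}\bigl(\sum_{k=p}^{i}(-1)^{k-p}\binom{i-1}{k-1}\bigr)$. After the shift $k\mapsto k-1$ the inner alternating binomial sum is evaluated using the partial-sum identity $\sum_{j=0}^{m}(-1)^{j}\binom{n}{j}=(-1)^{m}\binom{n-1}{m}$ together with $\sum_{j=0}^{i-1}(-1)^{j}\binom{i-1}{j}=0$; the latter holds for every $i$ occurring precisely because $i\ge p\ge2$, so $i-1\ge1$. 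One finds $\sum_{k=p}^{i}(-1)^{k-p}\binom{i-1}{k-1}=\binom{i-2}{p-2}$, and hence the $A$-part equals $\sum_{i=p}^{r}\binom{i-2}{p-2}w^{i-p}A_i$, which is exactly the non-constant part of $Y_p$.

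For the geometric part I would write $\sum_{k=p}^{r}(-w)^{k-p}(1-w)^{-k}=(1-w)^{-p}\sum_{m=0}^{r-p}\rho^{m}$ with $\rho=\f{-w}{1-w}$, sum the finite geometric series, and use $1-\rho=\f{1}{1-w}$ to get $(1-w)^{1-p}\bigl(1-\rho^{\,r-p+1}\bigr)=(1-w)^{1-p}-\f{(-w)^{r+1-p}}{(1-w)^{r}}$. Adding $\f{(-w)^{r+1-p}}{(1-w)^{r}}$ then produces exactly the constant term $(1-w)^{1-p}$ of $Y_p$. Combining the two parts gives that the right-hand side equals $Y_p$, which is \eqref{eqlem3.7}.

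The argument is entirely elementary and I expect no genuine obstacle; the one step needing a little care -- and which I would single out as the crux -- is the evaluation of the inner alternating binomial sum, where one must keep the index ranges straight and observe that the hypothesis $p\ge2$ (which is part of the statement, since $Y_p$ is only defined for $p=2,\ldots,r$) is exactly what forces the full alternating sum $\sum_{j=0}^{i-1}(-1)^{j}\binom{i-1}{j}$ to vanish for all $i$ appearing.
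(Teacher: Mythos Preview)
Your proof is correct. The argument is a clean direct verification: you substitute the defining sums for $X_k$ into the right-hand side, split into the $A$-part and the geometric part, and match each against the corresponding piece of $Y_p$. The evaluation of the alternating binomial sum and the telescoping of the geometric part are both handled correctly, and you rightly note that $p\ge 2$ is what makes the full alternating sum $\sum_{j=0}^{i-1}(-1)^j\binom{i-1}{j}$ vanish.

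The paper proceeds differently. Rather than verifying the identity directly, it first shows (using Pascal's rule $\binom{i-1}{p-1}=\binom{i-2}{p-2}+\binom{i-2}{p-1}$ inside the definition of $X_p$) that
\[
X_p = Y_p + q_1x_1\,Y_{p+1} + \delta_{p,r}\,\frac{q_1x_1}{(1-q_1x_1)^p},\qquad Y_{r+1}:=0,
\]
writes this as an upper-bidiagonal linear system in the $Y_p$'s, and then inverts the bidiagonal matrix explicitly (its inverse has entries $(-q_1x_1)^{j-i}$ above the diagonal). Your approach is more self-contained and avoids the matrix language entirely; the paper's approach, on the other hand, explains where the formula comes from (it is simply the inversion of a two-term recurrence) and yields all values $p=2,\ldots,r$ simultaneously from one matrix identity.
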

\begin{proof}
For any $2\leq p \leq r$,
\begin{align*}
X_p=& \sum_{i=p}^r\left[\left({i-2\atop p-2}\right)+\left({i-2\atop
p-1}\right)\right](q_1x_1)^{i-p}
\frac{x_1^{r+1-i}x_{i+1}-x_{r+2}}{x_{r+2}}+(1-q_1x_1)^{-p}\\
=&Y_p-(1-q_1x_1)^{1-p}+(1-q_1x_1)^{-p}+q_1x_1\sum\limits_{i=p+1}^r\left({i-2\atop p-1}\right)(q_1x_1)^{i-p-1}
\frac{x_1^{r+1-i}x_{i+1}-x_{r+2}}{x_{r+2}}\\
=&Y_p-(1-q_1x_1)^{1-p}+(1-q_1x_1)^{-p}+q_1x_1Y_{p+1}-(1-\delta_{p,r})q_1x_1(1-q_1x_1)^{-p}\\
=&Y_p+q_1x_1Y_{p+1}+\delta_{p,r}q_1x_1(1-q_1x_1)^{-p},
\end{align*}
where $Y_{r+1}:=0$. Rewriting these formulas in matrix forms, we get
$$\begin{pmatrix}
X_2\\
X_3\\
\vdots\\
X_r
\end{pmatrix}=
\begin{pmatrix}
1 & q_1x_1\\
& 1 & q_1x_1\\
&& \ddots & \ddots\\
&&& 1 & q_1x_1\\
&&&& 1
\end{pmatrix}
\begin{pmatrix}
Y_2\\
Y_3\\
\vdots\\
Y_r
\end{pmatrix}+
\begin{pmatrix}
0\\
\vdots\\
0\\
\frac{q_1x_1}{(1-q_1x_1)^r}
\end{pmatrix}.$$
Hence
$$\begin{pmatrix}
Y_2\\
Y_3\\
\vdots\\
Y_r
\end{pmatrix}=\begin{pmatrix}
1 & q_1x_1\\
& 1 & q_1x_1\\
&& \ddots & \ddots\\
&&& 1 & q_1x_1\\
&&&& 1
\end{pmatrix}^{-1}
\begin{pmatrix}
X_2\\
\vdots\\
X_{r-1}\\
X_r-\frac{q_1x_1}{(1-q_1x_1)^r}
\end{pmatrix}.$$

It is known that if we set
$$(d_{ij})=
\begin{pmatrix}
1 & q_1x_1\\
& 1 & q_1x_1\\
&& \ddots & \ddots\\
&&& 1 & q_1x_1\\
&&&&1
\end{pmatrix}^{-1},$$
then
$$d_{ij}=\left\{
\begin{array}{ll}
(-q_1x_1)^{j-i}, & i\leq j,\\
0, & i>j.
\end{array}\right.$$
Thus we obtain the identity. 
\end{proof}
\begin{lem}\label{lem12} We have the following identities. 
\begin{itemize}
\item[(i)] 
\begin{align*}
\widetilde{A^{(0)}_j} & = 
\sum_{m=j}^{r-1} \left\{ 
\sum_{k=r-m+1}^{r+1} \left( {k-2 \choose r-m}  + \f{q_1 u_1}{1 + q_1 u_1} {k-2 \choose r-m-1} \right)\right. \\
& \quad \times \left.\vbox to 21pt{}(-q_1)^{k-r+m-2}
u_1^{k-r-2} \left( u_1^{r-k+2} u_k - u_{r+2} \right) \right\}
S_q(m+1, j+1)\\
& \quad + \left( \f{u_1}{1+q_1u_1} \sum_{k=2}^{r+1} (-q_1)^{k-2} u_k + \f{(-q_1)^r u_1 u_{r+2}}{(1+q_1 u_1)^2} \right)
S_q(r, j+1) 
\end{align*}
for $0  \leq j \leq r-1$, where $\widetilde{A_j^{(0)}}$'s are same as those in Theorem \ref{thm9}.   
\item[(ii)] 
$$X_p(1-q_1x_1)-Y_p = \f{u_1^{r+1-p}(1+q_1 u_1)^{p-1}}{u_{r+2}} \sum_{k=p+1}^{r+1} (-q_1)^{k-p} (u_1 u_k - u_{k+1})$$
for $2 \leq p \leq r$, where $X_p$'s and $Y_p$'s are same as those in Proposition \ref{lem10}. 
\item[(iii)]  
\begin{align*} 
& x_{r+2} \sum_{p=2}^{r} \left\{ X_p (1-q_1x_1) -Y_p \right\} \f{c_{p-1, j+1}}{x_1^{r+1-p}}\\
& = \sum_{m=j}^{r-1} \left\{ \sum_{k=r-m+1}^{r+1} \left( { k-2 \choose r-m} + \f{q_1 u_1}{1+q_1 u_1}{k-2 \choose r-m-1} \right) \right. \\
& \quad \left. \times \f{(-q_1)^{k-r+m-1}}{1+q_1u_1} (u_1 u_k - u_{k+1}) \right\} S_q(m+1, j+1) \\
& \quad + \f{u_1}{(1+q_1 u_1)^2} \sum_{k=2}^{r+1}(-q_1)^{k-1} (u_1 u_k - u_{k+1}) S_q(r, j+1) 
\end{align*}
for $0 \leq j \leq r-1$, where $c_{i, j}$'s are same as those in Lemma \ref{lem11}. 
\end{itemize}
\end{lem}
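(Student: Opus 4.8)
The plan is to prove the three identities of Lemma~\ref{lem12} one after another, each by substituting the change of variables \eqref{x1}, \eqref{xj} (equivalently \eqref{eqlem3.4}) into the relevant $x$-expression and collapsing the resulting finite sums; part~(iii) will build on part~(ii). For (i) I would start from the formula for $\widetilde{A^{(0)}_j}$ read off from Theorem~\ref{thm9},
\[
\widetilde{A^{(0)}_j}=\sum_{m=j}^{r-1}\bigl(x_{r+2-m}-x_1x_{r+1-m}\bigr)S_q(m+1,j+1)+x_1x_2\,S_q(r,j+1),
\]
so that it suffices to express $x_{p+1}-x_1x_p$ (for $2\le p\le r+1-j$) and $x_1x_2$ in the $u$-variables. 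Writing $x_{p+1}$ and $x_p$ by \eqref{xj}, the two ``tail'' contributions $u_{r+2}/(1+q_1u_1)^{p}$ cancel, and after using $(-q_1u_1)^{k-p}=-q_1u_1\cdot(-q_1u_1)^{k-p-1}$ one rearranges what remains into the binomial sum displayed in (i). The factor $x_1x_2$ is handled directly: summing the geometric series inside $\sum_{k=2}^{r+1}(-q_1u_1)^{k-2}u_{r+2}$ collapses $x_2$ to $\sum_{k=2}^{r+1}(-q_1)^{k-2}u_k+(-q_1)^{r}u_{r+2}/(1+q_1u_1)$, whence $x_1x_2$ has the stated shape. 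Substituting both computations back into the displayed formula proves (i).

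For (ii) I would invoke Lemma~\ref{lem3.7}, which writes $Y_p=\sum_{k=p}^{r}(-q_1x_1)^{k-p}X_k+(-q_1x_1)^{r+1-p}/(1-q_1x_1)^{r}$, together with the elementary consequences of \eqref{eqlem3.4}: $X_p=x_1^{r+1-p}u_{p+1}/x_{r+2}$, $x_1=u_1/(1+q_1u_1)$, $1-q_1x_1=1/(1+q_1u_1)$ and $x_{r+2}=u_{r+2}/(1+q_1u_1)^{r+1}$. After these substitutions, $X_p(1-q_1x_1)-Y_p$ equals $\frac{u_1^{r+1-p}(1+q_1u_1)^{p-1}}{u_{r+2}}$ times the bracket $u_{p+1}-(1+q_1u_1)\sum_{k=p}^{r}(-q_1)^{k-p}u_{k+1}-(-q_1)^{r+1-p}u_{r+2}$, and a short telescoping (splitting off the $k=p$ term, absorbing the endpoint $-(-q_1)^{r+1-p}u_{r+2}$, and using $q_1(-q_1)^{s}=-(-q_1)^{s+1}$) turns that bracket into $\sum_{k=p+1}^{r+1}(-q_1)^{k-p}(u_1u_k-u_{k+1})$, which is exactly the right-hand side of (ii).

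For (iii) I would substitute (ii) into the left-hand side; the prefactor $x_{r+2}/x_1^{r+1-p}$ combines with the $u$-expression so that every power of $u_1$ and of $(1+q_1u_1)$ cancels except a single $1/(1+q_1u_1)$, leaving $\frac{1}{1+q_1u_1}\sum_{p=2}^{r}c_{p-1,j+1}\sum_{k=p+1}^{r+1}(-q_1)^{k-p}(u_1u_k-u_{k+1})$. Next I would insert $c_{p-1,j+1}=\sum_{m}(-q_1)^{p-1-m}\binom{p-2}{m-1}a_{m,j+1}$ with $a_{m,j+1}=S_q(r+1-m,j+1)-x_1S_q(r-m,j+1)$ from Lemma~\ref{lem11}(iii),(iv), put $x_1=u_1/(1+q_1u_1)$, then interchange the $p$- and $k$-summations and evaluate the inner sum of binomial coefficients by the hockey-stick identity $\sum_{p}\binom{p-2}{s}=\binom{k-2}{s+1}$. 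Collecting the coefficient of each $S_q(m+1,j+1)$ reproduces the sum over $m=j,\dots,r-1$ in (iii). The separate term $\frac{u_1}{(1+q_1u_1)^{2}}\sum_{k=2}^{r+1}(-q_1)^{k-1}(u_1u_k-u_{k+1})\,S_q(r,j+1)$ appears because the branch $-x_1S_q(r-m,j+1)$ of $a_{m,j+1}$ never has top index $r$: writing the $m=r-1$ summand in the same uniform form as the others introduces a spurious $\binom{k-2}{0}$-piece, and that separate term is exactly what cancels it (using again $q_1(-q_1)^{k-2}=-(-q_1)^{k-1}$).

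I expect part~(iii) to be the main obstacle: the double reindexing, the hockey-stick evaluation, and the isolated reconciliation of the boundary index $m=r-1$ have to be carried out together, and the whole computation is sign-sensitive because $q_1=1-q$ while $-q_1=q-1$ (the identity $q_1(-q_1)^{s}=-(-q_1)^{s+1}$ is used repeatedly in both (ii) and (iii)). Parts (i) and (ii) are, by comparison, essentially forced once one spots the cancellation of the $u_{r+2}/(1+q_1u_1)^{p}$ tails in (i) and the telescoping in (ii).
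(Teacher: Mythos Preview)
Your proposal is correct and follows essentially the same route as the paper's proof: for (i) substitute \eqref{x1}, \eqref{xj} into $x_{r+2-m}-x_1x_{r+1-m}$ and $x_1x_2$ (with the tail cancellation you describe); for (ii) combine Lemma~\ref{lem3.7} with the identity $X_p=x_1^{r+1-p}u_{p+1}/x_{r+2}$ from \eqref{eqlem3.4} and telescope; for (iii) insert (ii), expand $c_{p-1,j+1}$ and $a_{m,j+1}$ via Lemma~\ref{lem11}(iii),(iv), swap the $p$- and $k$-sums, and apply the hockey-stick identity $\sum_{p=m+1}^{k-1}\binom{p-2}{m-1}=\binom{k-2}{m}$, with the boundary correction at $m=r-1$ producing the isolated $S_q(r,j+1)$ term exactly as you say.
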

\begin{proof}
(i) We compute $\widetilde{A^{(0)}_j}$, which
by definition is
$$\widetilde{A^{(0)}_j} = \sum_{m=j}^{r-1} (x_{r+2-m}- x_1 x_{r+1-m}) S_q(m+1, j+1) + x_1 x_2 S_q(r, j+1).$$

By \eqref{x1} and \eqref{xj}, we have
\begin{align*}
x_1x_2=&\frac{u_1}{1+q_1u_1}\frac{1}{u_1^r}\left\{\sum\limits_{k=2}^{r+1}(-q_1u_1)^{k-2}
(u_1^{r+2-k}u_k-u_{r+2})+\frac{u_{r+2}}{1+q_1u_1}\right\}\\
=&\frac{u_1}{1+q_1u_1}\sum\limits_{k=2}^{r+1}(-q_1)^{k-2}u_k+\frac{(-q_1)^ru_1u_{r+2}}{(1+q_1u_1)^2}, 
\end{align*}
and
\begin{align*}
& x_{r+2-m}-x_1x_{r+1-m}\\
=& \frac{1}{u_1^m}\left\{\sum\limits_{k=r-m+2}^{r+1}\left({k-2\atop
r-m}\right)(-q_1u_1)^{k-r+m-2}(u_1^{r-k+2}u_k-u_{r+2})+\frac{u_{r+2}}{(1+q_1u_1)^{r-m+1}}\right\}\\
&-\frac{u_1}{1+q_1u_1}
\frac{1}{u_1^{m+1}}\left\{\sum\limits_{k=r-m+1}^{r+1}\left({k-2\atop
r-m-1}\right)(-q_1u_1)^{k-r+m-1}(u_1^{r-k+2}u_k-u_{r+2})+\frac{u_{r+2}}{(1+q_1u_1)^{r-m}}\right\}\\
=&\sum\limits_{k=r-m+1}^{r+1}\left\{\left({k-2\atop
r-m}\right)+\frac{q_1u_1}{1+q_1u_1}\left({k-2\atop
r-m-1}\right)\right\}(-q_1)^{k-r+m-2}u_1^{k-r-2}(u_1^{r-k+2}u_k-u_{r+2}).
\end{align*}
%
Hence we obtain the result. \\
(ii) By \eqref{eqlem3.4}, \eqref{x1} and \eqref{xj}, 
$$X_p=\frac{x_1^{r+1-j}}{x_{r+2}}u_{p+1}=\frac{u_1^{r+1-p}u_{p+1}(1+q_1u_1)^p}{u_{r+2}}.$$
Using \eqref{x1} and \eqref{eqlem3.7}, we have
\begin{align*}
Y_p=&\sum\limits_{k=p}^r\left(\frac{-q_1u_1}{1+q_1u_1}\right)^{k-p}\frac{u_1^{r+1-k}u_{k+1}(1+q_1u_1)^k}{u_{r+2}}
+\left(\frac{-q_1u_1}{1+q_1u_1}\right)^{r+1-p}(1+q_1u_1)^r\\
=&\frac{u_1^{r+1-p}(1+q_1u_1)^{p-1}}{u_{r+2}}\left\{\sum\limits_{k=p}^r(-q_1)^{k-p}u_{k+1}(1+q_1u_1)
+(-q_1)^{r+1-p}u_{r+2}\right\}.
\end{align*}
Hence we have 
\begin{align*}
& X_p(1-q_1x_1)-Y_p =  \frac{X_p}{1+q_1u_1}-Y_p\\
& =\frac{u_1^{r+1-p}u_{p+1}(1+q_1u_1)^{p-1}}{u_{r+2}} \\
&\quad - \frac{u_1^{r+1-p}(1+q_1u_1)^{p-1}}{u_{r+2}}\left\{\sum\limits_{k=p}^r(-q_1)^{k-p}u_{k+1}(1+q_1u_1)
+(-q_1)^{r+1-p}u_{r+2}\right\}.
\end{align*} 
Thus
\begin{align*}
&(X_p(1-q_1x_1)-Y_p)\frac{u_{r+2}}{u_1^{r+1-p}(1+q_1u_1)^{p-1}}\\
=& u_{p+1}-\sum\limits_{k=p}^r(-q_1)^{k-p}u_{k+1}(1+q_1u_1)
-(-q_1)^{r+1-p}u_{r+2}\\
=&-\sum\limits_{k=p+1}^r(-q_1)^{k-p}u_{k+1}+\sum\limits_{k=p}^r(-q_1)^{k+1-p}u_1u_{k+1}-(-q_1)^{r+1-p}u_{r+2}\\
=& \sum\limits_{k=p+1}^{r+1}(-q_1)^{k-p}(u_1u_k-u_{k+1}).
\end{align*}
%
(iii) By (ii), \eqref{x1} and \eqref{xj}, we have 
\begin{align*}
& x_{r+2} \sum_{p=2}^{r} \left( X_p(1-q_1 x_1) - Y_p \right) \f{c_{p-1, j+1}}{x_1^{r+1-p}}\\
=&\frac{u_{r+2}}{(1+q_1u_1)^{r+1}}\sum\limits_{p=2}^r\frac{u_1^{r+1-p}(1+q_1u_1)^{p-1}}{u_{r+2}}
\sum\limits_{k=p+1}^{r+1}(-q_1)^{k-p}(u_1u_k-u_{k+1})c_{p-1,j+1}\left(\frac{1+q_1u_1}{u_1}\right)^{r+1-p}\\
=&\frac{1}{1+q_1u_1}\sum\limits_{p=2}^r\sum\limits_{k=p+1}^{r+1}(-q_1)^{k-p}(u_1u_k-u_{k+1})c_{p-1,j+1}.
\end{align*}

By definition of $c_{i, j}$ in Lemma \ref{lem11}: 
$$c_{p-1,j+1}=\sum\limits_{m=1}^{\min(p-1,r-j)}\left({p-2\atop m-1}\right)(-q_1)^{p-1-m}a_{m,j+1},$$
we have
\begin{align*}
& x_{r+2} \sum_{p=2}^{r} \left( X_p(1-q_1 x_1) - Y_p \right) \f{c_{p-1, j+1}}{x_1^{r+1-p}}\\
=&\frac{1}{1+q_1u_1}\sum\limits_{p=2}^r\sum\limits_{k=p+1}^{r+1}(-q_1)^{k}(u_1u_k-u_{k+1})
\sum\limits_{m=1}^{\min(p-1,r-j)}\left({p-2\atop
m-1}\right)(-q_1)^{-m-1}a_{m,j+1}.
\end{align*}
Changing the orders of the sums, we have
\begin{align*}
& x_{r+2} \sum_{p=2}^{r} \left( X_p (1-q_1 x_1) - Y_p \right) \f{c_{p-1, j+1}}{x_1^{r+1-p}}\\
=&\frac{1}{1+q_1u_1}\sum\limits_{m=1}^{r-j}\sum\limits_{p=m+1}^{r}\sum\limits_{k=p+1}^{r+1}
(-q_1)^{k-m-1}(u_1u_k-u_{k+1})\left({p-2\atop
m-1}\right)a_{m,j+1}\\
=&\frac{1}{1+q_1u_1}\sum\limits_{m=1}^{r-j}\sum\limits_{k=m+2}^{r+1}\left\{\sum\limits_{p=m+1}^{k-1}
\left({p-2\atop m-1}\right)\right\}
(-q_1)^{k-m-1}(u_1u_k-u_{k+1})a_{m,j+1}\\
=&\frac{1}{1+q_1u_1}\sum\limits_{m=1}^{r-j}\sum\limits_{k=m+2}^{r+1}
\left({k-2\atop m}\right)(-q_1)^{k-m-1}(u_1u_k-u_{k+1})a_{m,j+1}.
\end{align*}

Using \eqref{x1} and the definition of $a_{i, j}$ in Lemma \ref{lem11}: 
$$a_{m, j+1} = S_q(r+1-m, j+1) -x_1 S_q(r-m, j+1), $$
the right-hand side is calculated as  
\begin{align*}
& \f{1}{1+q_1 u_1} \sum_{m=1}^{r-j} \sum_{k=m+2}^{r+1} {k-2 \choose m} (-q_1)^{k-m-1} (u_1 u_k - u_{k+1}) S_q(r+1-m, j+1) \\
& - \f{u_1}{( 1+q_1 u_1 )^2} \sum_{m=1}^{r-j} \sum_{k=m+2}^{r+1} {k-2 \choose m} (-q_1)^{k-m-1} (u_1 u_k - u_{k+1})S_q(r-m, j+1) \\
& =  \f{1}{1+q_1 u_1} \sum_{m=j}^{r-1} \sum_{k=r-m+1}^{r+1} {k-2 \choose r-m} (-q_1)^{k-r+m-1} (u_1 u_k - u_{k+1}) S_q(m+1, j+1) \\
&  + \f{q_1 u_1}{( 1+q_1 u_1 )^2} \sum_{m=j}^{r-2} \sum_{k=r-m+1}^{r+1} {k-2 \choose r-m-1} (-q_1)^{k-r+m-1} (u_1 u_k - u_{k+1})S_q(m+1, j+1)\\
& =  \sum_{m=j}^{r-1}  \left\{ \sum_{k=r-m+1}^{r+1} \left(  {k-2 \choose r-m} + \f{q_1 u_1}{1+q_1 u_1 } {k-2 \choose r-m-1}\right) 
\f{(-q_1)^{k-r+m-1}}{1+q_1 u_1} (u_1 u_k - u_{k+1}) \right\}\\
& \times S_q(m+1, j+1) - \f{q_1 u_1}{( 1+q_1 u_1 )^2} \sum_{k=2}^{r+1} (-q_1)^{k-2} (u_1 u_k - u_{k+1})S_q(r, j+1). 
\end{align*}
\end{proof}
\begin{proof}[Proof of Proposition \ref{propAj}]
By Lemma \ref{lem12},  
$A_j$ in Proposition \ref{Prop} is expressed as required. 
\end{proof}
\subsection{The constant $c$}
Finally, we write the constant $c$ in Proposition \ref{Prop} in terms of $u_j$'s. 
As before, put $q_1 = 1- q$.   
\begin{prop}\label{lem13}
We have
$$c= 1-\f{u_1}{1+q_1u_1} - t \f{1-q u_1}{1 + q_1 u_1} \sum_{k=2}^{r+1} q^{k-2} u_k - t \f{q^r}{1+q_1u_1}u_{r+2}. $$
\end{prop}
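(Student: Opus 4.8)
The plan is to take the closed form
\[
c = 1-(x_1+tx_2) - t\sum_{i=0}^{r-1}(x_{r+2-i} - x_1 x_{r+1-i})
\]
recorded in Corollary \ref{Cor} and substitute the change of variables \eqref{x1}, \eqref{xj}. First I would reindex the last sum by $j=r+2-i$, so that it becomes $\sum_{j=3}^{r+2}(x_j - x_1 x_{j-1})$, and then regroup the $t$-terms, using $x_2+\sum_{j=3}^{r+2}x_j=\sum_{j=2}^{r+2}x_j=\big(\sum_{j=2}^{r+1}x_j\big)+x_{r+2}$, to arrive at the compact form
\[
c = (1-x_1) - t(1-x_1)\sum_{j=2}^{r+1}x_j - t\,x_{r+2}.
\]
Two of the ingredients here are read off immediately from \eqref{x1} and \eqref{xj}: one has $1-x_1 = \f{1-qu_1}{1+q_1u_1}$, and, since the defining sum in \eqref{xj} is empty when $j=r+2$, one has $x_{r+2} = \f{u_{r+2}}{(1+q_1u_1)^{r+1}}$.

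The heart of the computation is the evaluation of $\sum_{j=2}^{r+1}x_j$. Expanding \eqref{xj}, this is a sum of three double sums: the part carrying the variables $u_k$ ($2\le k\le r+1$), the part carrying $-u_{r+2}$ from inside the braces of \eqref{xj}, and the part carrying the tail $\f{u_{r+2}}{(1+q_1u_1)^{j-1}}$. For the first two I would interchange the order of summation from $\sum_{j=2}^{r+1}\sum_{k=j}^{r+1}$ to $\sum_{k=2}^{r+1}\sum_{j=2}^{k}$ and apply the binomial identity $\sum_{j=2}^{k}\binom{k-2}{j-2}(q-1)^{k-j} = q^{k-2}$ to collapse the inner sum; the powers of $u_1$ recombine to a single power, so the $u_k$-part becomes $\sum_{k=2}^{r+1}q^{k-2}u_k$ and the $-u_{r+2}$-part becomes $-\,u_{r+2}\sum_{k=2}^{r+1}q^{k-2}u_1^{k-r-2}=-\,\f{u_{r+2}\big(1-(qu_1)^r\big)}{u_1^r(1-qu_1)}$ after summing a geometric series in $qu_1$. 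The tail part is a geometric series in $\f{u_1}{1+q_1u_1}$ and sums to $\f{u_{r+2}}{u_1^r(1-qu_1)}\big(1-\f{u_1^r}{(1+q_1u_1)^r}\big)$. The decisive point is that the two contributions $\mp\,\f{u_{r+2}}{u_1^r(1-qu_1)}$ cancel, leaving
\[
\sum_{j=2}^{r+1}x_j = \sum_{k=2}^{r+1}q^{k-2}u_k + \f{q^r u_{r+2}}{1-qu_1} - \f{u_{r+2}}{(1-qu_1)(1+q_1u_1)^r}.
\]

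Finally I would substitute this expression, together with $1-x_1 = \f{1-qu_1}{1+q_1u_1}$ and $x_{r+2} = \f{u_{r+2}}{(1+q_1u_1)^{r+1}}$, back into the compact form of $c$. In $c$, the summand $-t(1-x_1)$ times the third term of $\sum_{j=2}^{r+1}x_j$ equals $+t\,\f{u_{r+2}}{(1+q_1u_1)^{r+1}}$, which cancels the trailing $-t\,x_{r+2}$; what survives is $\f{1-qu_1}{1+q_1u_1}\big(1 - t\sum_{k=2}^{r+1}q^{k-2}u_k\big) - t\,\f{q^r u_{r+2}}{1+q_1u_1}$, and the elementary identity $\f{1-qu_1}{1+q_1u_1} = 1 - \f{u_1}{1+q_1u_1}$ then yields exactly the stated formula. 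I expect the only genuine obstacle to be the bookkeeping: keeping the three double-sum pieces straight, applying the binomial and geometric summations correctly, and verifying the two cancellations (the $u_1^{-r}$ terms inside $\sum_{j=2}^{r+1}x_j$ and the $t\,x_{r+2}$ terms in $c$). Negative powers of $u_1$ occur only in intermediate expressions and drop out, so the identity is an identity of rational functions and holds wherever $c$ is defined.
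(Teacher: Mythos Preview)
Your proof is correct. The paper takes a slightly different organizational route: it substitutes \eqref{x1} and \eqref{xj} directly into the individual pieces, computing $x_1x_2$ and each difference $x_{r+2-i}-x_1x_{r+1-i}$ separately (the latter via the formula already obtained in the proof of Lemma~\ref{lem12}(i)), and only then sums over $i$ and combines with the $x_2$-term. Your initial algebraic regrouping
\[
c=(1-x_1)-t(1-x_1)\sum_{j=2}^{r+1}x_j-t\,x_{r+2}
\]
is the main point of departure: it collapses the problem to evaluating the single sum $\sum_{j=2}^{r+1}x_j$, so that $x_2$ need not be treated separately and the two cancellations you identify become transparent. Both arguments ultimately rest on the same binomial identity $\sum_{j=2}^{k}\binom{k-2}{j-2}(q-1)^{k-j}=q^{k-2}$ and geometric-series summations, but your packaging is somewhat more economical.
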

\begin{proof}
Recall that 
$$c = 1-(x_1+tx_2) - t\sum_{i=0}^{r-1} (x_{r+2-i} - x_1 x_{r+1-i}). $$
By \eqref{x1} and \eqref{xj}, we have 
$x_1 = \f{u_1}{1+q_1 u_1}$, $x_2= \sum_{k=2}^{r+1} (-q_1)^{k-2} u_k + \f{(-q_1)^r u_{r+2}}{1+q_1 u_1}$, and
\begin{align*}
& x_{r+2-i} - x_i x_{r+1-i} \\
& = \sum_{k=r-i+1}^{r+1} \left\{ {k-2 \choose r-i} + \f{q_1 u_1}{1+q_1 u_1} {k-2 \choose r-i-1} \right\} (-q_1)^{k-r+i-2} (u_k - u_1^{k-r-2} u_{r+2}).   
\end{align*}
We have  
\begin{align*}
& 
\sum_{i=0}^{r-1} (x_{r+2-i} - x_i x_{r+1-i}) \\
& = 
\sum_{k=2}^{r+1}\sum_{i=r+1-k}^{r-1} \left\{ {k-2 \choose r-i} + \f{q_1 u_1}{1+q_1 u_1} {k-2 \choose r-i-1} \right\} (-q_1)^{k-r+i-2} (u_k - u_1^{k-r-2} u_{r+2})\\
& = 
\sum_{k=2}^{r+1}\left\{ 
\sum_{i=1}^{k-1} {k-2 \choose i}(-q_1)^{k-2-i} + \f{q_1 u_1}{1+q_1 u_1} \sum_{i=0}^{k-2}{k-2 \choose i}(-q_1)^{k-i-3} \right\} 
(u_k - u_1^{k-r-2} u_{r+2})\\
& = 
\sum_{k=2}^{r+1}\left\{ 
 \f{1-q u_1}{1+q_1 u_1} q^{k-2} - (-q_1)^{k-2}\right\}  (u_k - u_1^{k-r-2} u_{r+2}).
\end{align*}
Thus 
\begin{align*}
& -x_2 - \sum_{i=0}^{r-1} (x_{r+2-i} - x_i x_{r+1-i}) \\
& = - \f{(-q_1)^r u_{r+2}}{1+q_1 u_1} - \f{1-q u_1}{1+q_1 u_1}\sum_{k=2}^{r+1} q^{k-2} u_k
+ \left( \f{1-q u_1}{1+q_1 u_1} \sum_{k=2}^{r+1} q^{k-2} u_1^{k-2}
 - \sum_{k=2}^{r+1}(-q_1)^{k-2} u_1^{k-2}  \right)u_1^{-r}u_{r+2} \\
& = - \f{(-q_1)^r u_{r+2}}{1+q_1 u_1} - \f{1-q u_1}{1+q_1 u_1}\sum_{k=2}^{r+1} q^{k-2} u_k +\f{-q^r+(-q_1)^r }{1+q_1 u_1}  u_{r+2} \\
& = - \f{1-q u_1}{1+q_1 u_1}\sum_{k=2}^{r+1} q^{k-2} u_k -\f{q^r}{1+q_1 u_1}  u_{r+2} . 
\end{align*}
This completes the proof.  
\end{proof}
\subsubsection*{Acknowledgments}
Thanks are due to Tatsushi Tanaka for his helpful  comments and advice. 
The first author is supported by the National Natural Science Foundation of China No. 11471245. 
The second author is supported by the Grant-in-Aid for Young Scientists (B) No. 15K17523, Japan Society for the Promotion of Science. 
%


\begin{thebibliography}{9}
\bibitem{AKO} T. Aoki, Y. Kombu, Y. Ohno, A generating function for sums of multiple zeta values and its applications, Proc. Amer. Math. Soc. 136 (2) (2008), 387--395.

\bibitem{AO} T. Aoki, Y. Ohno, Sum relations for multiple zeta values and connection formula for the Gauss hypergeometric function, Publ. Res. Inst. Math. Sci. 41 (2) (2005), 329--337.

\bibitem{AOW} T. Aoki, Y. Ohno, N. Wakabayashi, On generating functions of multiple zeta values and generalized hypergeometric functions, Manuscripta Math. 134 (1-2) (2011), 139--155.

\bibitem{B} D. M. Bradley, Multiple $q$-zeta values, J. Algebra 283 (2005), 752--798.

\bibitem{G} G. Gasper and M. Rahman, Basic Hypergeometric Series 2 ed, Encyclopedia of Mathematics and its Applications, vol. 96. Cambridge University Press, Cambridge (2004). 

\bibitem{Li2} Z. Li, Sum of multiple zeta values of fixed weight, depth and $i$-height, Math. Z., 258 (1) (2008), 133--142.

\bibitem{Li} Z. Li, Sum of multiple $q$-zeta values, Proc. Amer. Math. Soc. 138 (2) (2010), 505--516.

\bibitem{LQ} Z. Li, C. Qin, Some relations of interpolated multiple zeta values, 
Internat. J. Math. 28 (5) (2017), 1750033, 25pp. 

\bibitem{OO} Y. Ohno, J. Okuda, On the sum formula for the $q$-analogue of non-strict multiple zeta values, Proc. Amer. Math. Soc. 135 (2007), 3029--3037.

\bibitem{OT} J. Okuda, Y. Takeyama, On relations for the multiple $q$-zeta values, Ramanujan J. 14 (2007), 379--387. 

\bibitem{OZ} Y. Ohno, D. Zagier, Multiple zeta values of fixed weight, depth and height, Indag. Math. 12 (4) (2001), 483--487. 

\bibitem{Take} Y. Takeyama, A $q$-analogue of non-strict multiple zeta values and basic hypergeometric series, Proc. Amer. Math. Soc. 137 (9) (2009), 2997--3002.



\bibitem{Y}  S. Yamamoto, Interpolation of multiple zeta and zeta-star values, J. Algebra 385 (2013), 102--114.

\bibitem{W} N. Wakabayashi, Interpolation of $q$-analogue of multiple zeta and zeta-star values. J. Number Theory 174 (2017), 26--39. 

\bibitem{Z} J. Zhao, Multiple $q$-zeta functions and multiple $q$-polylogarithms, Ramanujan J. 14 (2007), 189--221.

\end{thebibliography}
\end{document}